\numberwithin{equation}{section}
 \def\qed{\hfill$\Box$\medskip}
 \newtheorem{theorem}{Theorem}[section]
 \newtheorem{lemma}[theorem]{Lemma}
 \newtheorem{proposition}[theorem]{Proposition}
 \def\<{\langle}\def\>{\rangle}
 \def\beqlb{\begin{eqnarray}}\def\eeqlb{\end{eqnarray}}
 \def\beqnn{\begin{eqnarray*}}\def\eeqnn{\end{eqnarray*}}
  \def\ar{\!\!&}
\begin{document}

\centerline{\LARGE\bf On Seneta-Heyde Scaling for a stable branching}

\medskip
\centerline{\LARGE\bf random walk \footnote{This work is supported by NSFC
  (No.  11371061, 11531001, 11671041).}}
\bigskip\bigskip

\centerline{Hui He,\,Jingning Liu\,\footnote{Corresponding author.  } and Mei Zhang\,\footnote{Corresponding author.}}

\bigskip\bigskip

{\narrower{\narrower

\noindent{\it Abstract.} We consider a discrete-time branching random walk in the boundary case, where the associated random walk is in the domain of attraction of an $\alpha$-stable law with $1<\alpha<2$. We prove that the derivative martingale $D_n$ converges to a non-trivial limit $D_\infty$ under some regular conditions. We also study the additive martingale  $W_n$, and prove $n^\frac{1}{\alpha}W_n$ converges in probability to a constant multiple of $D_\infty$.

\bigskip

\noindent{\it Key words and phrases:} branching random walk, domain of attraction, stable distribution, Seneta-Hedye scaling, derivative martingale, additive martingale. 

\bigskip

\noindent{\it Mathematics Subject Classification (2010):} 60J80, 60F05

\par}\par}

\bigskip\bigskip

\section{Introduction}

We consider a discrete-time one-dimensional branching random walk. It starts with an initial ancestor particle located at the origin. At time $1$, the particle dies, producing a certain number of new particles. These new particles are positioned according to the distribution of the point process $\Theta$. At time $2$, these particles die, each giving birth to new particles positioned (with respect to the birth place) according to the law of $\Theta$. And the process goes on with the same mechanism. We assume the particles produce new particles independently of each other at the same generation and of everything up to that generation. This system can be seen as a branching tree $\textbf{T}$ with the origin as the root.

For each vertex $x$ on $\textbf{T}$, we denote its position by $V(x)$. The family of the random variables $(V(x))$ is usually referred as a branching random walk (Biggins~\cite{bi}).

Throughout the paper, we assume  the boundary case (in the sense of \cite{BK}):
\begin{align}\label{C:1.1}
\mathbf{E}\Big(\sum_{|x|=1}1\Big)>1, \;\; \mathbf{E}\Big(\sum_{|x|=1}e^{-V(x)}\Big)=1,\;\; \mathbf{E}\Big(\sum_{|x|=1}V(x)e^{-V(x)}\Big)=0,
\end{align}
where $|x|$ denotes the generation of $x$. Every branching random walk satisfying certain mild integrability assumptions can be reduced to this case by some renormalization; see Jaffuel \cite{j} for more details. Note that \eqref{C:1.1} implies  $\textbf{T} $ is a super-critical Galton-Watson tree.

One could immediately see from  \eqref{C:1.1} that
\begin{align}
W_n:=\sum_{|x|=n}e^{-V(x)},\quad n\geq0, \nonumber
\end{align}
is a martingale, which is referred to as the \emph{additive martingale} in the literature. Since $(W_n)$ is nonnegative, it converges almost surely to $0$ (see Biggins \cite{bi2} and Lyons~\cite{Ly}).
It is natural to ask at which rate $W_n$ goes to $0$, and this is our main issue to discuss.

This issue is usually called Seneta-Heyde norming problem. As for the Seneta-Heyde theorem for Galton-Watson
processes, see Heyde \cite{hey}, Seneta \cite{se}, etc. The study of the Seneta-Heyde norming for the branching random walk in a general case (i.e., without assumption \eqref{C:1.1}) goes back at least to Biggins and Kyprianou \cite{BK3} and \cite{BK4}. The boundary case has first been considered by Biggins and Kyprianou \cite{BK}. Later, the converging rate of $W_n$ associated with the one-dimensional random walk with finite variance (see (\ref{C:1.2})) has been investigated by Hu and Shi~\cite{hs}.
Recently,  Aidekon and Shi \cite{AS1} established the exact rate for $W_n$ under some  weaker integrability assumptions.

Define
\begin{align}
D_n:=\sum_{|x|=n}V(x)e^{-V(x)},\quad n\geq0. \nonumber
\end{align}

Under the assumption $\mathbf{E}\big(\Sigma_{|x|=1}V(x)e^{-V(x)}\big)=0$, one can easily check that $(D_n)$
is also a martingale, which is referred  as the \emph{derivative martingale} associated with $(V(x))$. It is the basis of our discussion on additive martingale $W_n$. The convergence of derivative martingale and related questions have been extensively studied. One can see Barral \cite{Ba}, Biggins \cite{bi4} and \cite{bi5} for non-boundary cases, Kyprianou \cite{Ky} and Liu \cite{Liu} for the boundary case. Later, Biggins and Kyprianou \cite{BK2} studied it by considering the branching random walks as multi-type branching processes and gave a Kesten-Stigum  theorem for the mean convergence.
However, there is a small ``gap" between the necessary condition and the sufficient condition.
Recently, Aidekon \cite{AI} and Chen \cite{Chen} filled the ``gap". To state their results, we give the following  integrability condition:
\begin{align}
\;\mathbf{E}\Big(\sum_{|x|=1}V(x)^2e^{-V(x)}\Big)<\infty.\label{C:1.2}
\end{align}

\noindent{\bf Theorem A} (Biggins and Kyprianou \cite{BK2}).
 Assume \eqref{C:1.1} and \eqref{C:1.2}. Then there exists a  nonnegative random
variable $D_\infty$ such that
\begin{align}
D_n\rightarrow D_\infty, \quad\mathbf{P}\text{-}a.s. \nonumber
\end{align}

\noindent{\bf Theorem B} (Aidekon \cite{AI}, Chen \cite{Chen}).
Assume \eqref{C:1.1} and \eqref{C:1.2}. Then
$\mathbf{P}(D_\infty>0)>0$  if and only if the following condition holds:
\begin{align}
\;\mathbf{E}\big(X\log_{+}^{2}{X}+\widetilde{X}\log_{+} {\widetilde{X}}\big)<\infty, \label{C:1.3}
\end{align}
where $\log_{+}y:=\max\{0,\log y\}$, $\log^2_+y:=(\log_+y)^2$ for any $y>0$, and
\begin{align}
X:=\sum_{|x|=1}e^{-V(x)},\quad \widetilde{X}:=\sum_{|x|=1}V(x)_+e^{-V(x)}, \label{E:1.5}
\end{align}
with $V(x)_+:=\max\{V(x),0\}$. Moreover, when $D_n$ is non-trivial, $\mathbf{P}(D_\infty=0)$  equals to the extinction probability of the branching random walk.

Many discussions in this paper are trivial if $\bf T$ is finite. So let us introduce the conditional probability
\begin{align}
\mathbf{P}^*(\cdot):=\mathbf{P}(\cdot\,|\mbox{non-extinction}). \nonumber
\end{align}
Obviously $W_n\rightarrow0,\; \mathbf{P}^*\text{-}a.s.$

\noindent{\bf Theorem C} (Aidekon and Shi \cite{AS1}).
Assume \eqref{C:1.1}, \eqref{C:1.2} and \eqref{C:1.3}. Under $\mathbf{P}^*$, we have
\begin{align}
\lim_{n\rightarrow\infty} n^{1/2}W_n={\big(\frac{2}{\pi\sigma^2}\big)}^{1/2}D_\infty, \quad \text{in probability}, \nonumber
\end{align}
where $D_\infty>0$ is the random variable in Theorem A, and
\begin{align}
\sigma^2:=\mathbf{E}\big(\sum_{|x|=1}V(x)^2e^{-V(x)}\big)<\infty. \nonumber
\end{align}
%%We also study the additive martingale $W_n$ by analyzing the derivative martingale $D_n$.

In this  paper, instead of  \eqref{C:1.2} and \eqref{C:1.3}, we shall study $W_n$ under the following assumptions:
\begin{align}
&(\romannumeral1)\;\;\mathbf{E}\,\Big(\sum_{|x|=1}\mathbf{1}_{\{V(x)\leq -y\}} e^{-V(x)}\Big)=o(y^{-\alpha}),\quad y\to \infty; \label{stable1}\\
&(\romannumeral2)\;\;\mathbf{E}\,\Big(\sum_{|x|=1}\mathbf{1}_{\{V(x)\geq y\}} e^{-V(x)}\Big)\sim \frac{c}{y^{\alpha}},\quad y\to \infty; \label{stable2}\\
&(\romannumeral3)\;\;\mathbf{E}\big(X(\log_{+}{X})^{\alpha}+\widetilde{X}(\log_{+}{\widetilde{X}})^{\alpha-1}\big)<\infty,  \label{stable3}
\end{align}
where $\alpha\in (1,2)$, $c>0$, $X$ and $\widetilde{X}$ are defined by \eqref{E:1.5}. Under these assumptions,  the step of the one-dimensional random walk associated with $(V(x))$ (see Section 2) belongs to the domain of attraction of an $\alpha$-stable law. We remark here that for some technical reasons, constant $c$ in (\ref{stable2}) can not be replaced by a general slowly varying function.

\smallskip

We are ready to state our main results.

\begin{theorem}\label{T:1.1}
Assume \eqref{C:1.1}, \eqref{stable1}, \eqref{stable2}. Then there exists a finite nonnegative random
variable $D_\infty$ such that
\begin{align}
D_n\rightarrow D_\infty, \;\;\mathbf{P}\text{-}a.s. \label{E:1.1}
\end{align}
Moreover, if condition \eqref{stable3} holds additionally, then  $\mathbf{P}^*(D_\infty>0)=1$.
\end{theorem}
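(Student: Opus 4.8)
The plan is to follow the now-standard "spine decomposition + change of measure" strategy used by Biggins--Kyprianou \cite{BK2} and Aidekon \cite{AI}, but replacing the role of the finite-variance random walk by one in the domain of attraction of an $\alpha$-stable law. First I would introduce the associated one-dimensional random walk $(S_n)$ whose step law is the $e^{-V(x)}$-tilted version of the first generation, so that under \eqref{stable1}--\eqref{stable2} the walk $S_n$ is centered and lies in the domain of attraction of a spectrally positive $\alpha$-stable law; let $(a_n)$ with $a_n \sim n^{1/\alpha}$ (up to a slowly varying correction, which here is asymptotically constant by the remark after \eqref{stable3}) be the associated norming sequence. I would then recall the renewal function $R(\cdot)$ associated with the strict descending ladder heights of $(S_n)$, which is harmonic for the walk killed below $0$; under stable-domain assumptions $R$ is still well-defined, subadditive, and satisfies $R(u) \asymp u$ as $u \to \infty$. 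The first half of the theorem, \emph{a.s.} convergence of $D_n$, then follows by writing $D_n$ as a sum over the tree and using that $D_n$ is a (signed) martingale; the key is a truncation argument showing that the negative part is negligible. Concretely, one controls $\sum_{|x|=n} V(x)_- e^{-V(x)}$ and the contribution of particles that have gone below a slowly growing barrier, so that $D_n$ is asymptotically equal to a truncated version $D_n^{(\beta_n)} := \sum_{|x|=n} R(V(x)+\beta_n)\,e^{-V(x)}\mathbf{1}_{\{\underline V(x)\ge -\beta_n\}}$ which is a genuine nonnegative supermartingale (or close to one), hence convergent; matching the two gives \eqref{E:1.1}. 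This part uses \eqref{C:1.1} and the tail conditions \eqref{stable1}--\eqref{stable2} only, exactly as in Theorem A but with the stable renewal estimates substituted for the Gaussian ones.

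For the non-degeneracy statement $\mathbf{P}^*(D_\infty>0)=1$, I would run the classical argument: nonnegativity and the martingale/supermartingale property already give that $\{D_\infty>0\}$ has positive probability on non-extinction iff a suitable change of measure is non-singular. Introduce the size-biased measure $\mathbf{Q}$ under which the spine particle $(w_n)$ moves like the walk $(S_n)$ conditioned to stay positive (an $h$-transform by $R$), the spine branches according to the size-biased offspring law, and off-spine subtrees evolve under $\mathbf{P}$. The standard dichotomy (Lyons' method, as in \cite{Ly}, \cite{BK2}) is that $D_\infty>0$ $\mathbf{Q}$-a.s.\ — equivalently $D_n$ is uniformly integrable along the relevant subsequence and $\mathbf{E}(D_\infty)>0$ on an event of full $\mathbf{P}^*$-measure — provided a certain series built from the spine increments converges $\mathbf{Q}$-a.s., and $D_\infty=0$ a.s.\ otherwise. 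The content is therefore to show, under \eqref{stable3}, that $\sum_n \mathbf{Q}$-probability of the "bad" events at generation $n$ is finite (Borel--Cantelli), where the bad event at generation $n$ records either an unusually large value of $X$ or $\widetilde X$ along the spine, or an atypically large displacement. This is where the moment condition \eqref{stable3} enters: the exponents $\alpha$ and $\alpha-1$ on the logarithms are exactly what is needed because, under the conditioned-to-stay-positive measure, $w_n/a_n$ has a nondegenerate limit, so $w_n$ is of order $n^{1/\alpha}$ and the relevant sums are of the form $\sum_n \mathbf{P}(\log_+ X > c\, n^{1/\alpha})$, which converges iff $\mathbf{E}((\log_+X)^\alpha)<\infty$, and similarly for $\widetilde X$ with exponent $\alpha-1$ coming from the extra $\widetilde X$-factor being weighted by $V(x)_+$ and hence by a further power $n^{-1/\alpha}$ of the scaling. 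I would also use that when $D_n$ is non-trivial, $\{D_\infty=0\}$ is a.s.\ the extinction event: this follows because $\mathbf{1}_{\{D_\infty=0\}}$ satisfies the same fixed-point/recursion as the extinction probability (a $0$--$1$ law along the tree via the branching property), so on non-extinction $D_\infty>0$ a.s.

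The main obstacle I anticipate is the spine estimate in the stable regime: under finite variance one has sharp local-limit and ballot-type estimates for $S_n$ (e.g.\ $\mathbf{P}(S_n \in \cdot,\, \underline S_n \ge 0) \asymp R(\cdot)/n^{3/2}$), and the whole quantitative scheme in \cite{AI}, \cite{Chen} rests on these. In the $\alpha$-stable domain-of-attraction setting one must instead invoke the analogues due to, e.g., Vatutin--Wachtel and Doney for random walks conditioned to stay positive: the persistence probability decays like $n^{-(1-1/\alpha)}$ (not $n^{-1/2}$), and the conditioned walk rescaled by $a_n$ converges to the corresponding stable meander. Feeding these asymptotics into the Borel--Cantelli computation — and in particular checking that the slowly varying factors hidden in $a_n$ are genuinely asymptotically constant here, which is why \eqref{stable2} is stated with a constant $c$ rather than a general slowly varying function — is the technical heart; once those inputs are in place, the rest of the non-degeneracy argument is a routine adaptation of the finite-variance proof.
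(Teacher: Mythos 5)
Your proposal matches the paper's strategy essentially step for step: truncate to $D_n^\beta$ and exhaust by the events $\Omega_\beta=\{\underline V(x)>-\beta\ \forall x\}\cap\{\text{non-extinction}\}$ for the a.s.\ convergence; then run the Biggins--Kyprianou/Lyons spine change of measure $\hat{\mathbf{P}}^\beta$, proving $\sup_n D_n^\beta<\infty$ $\hat{\mathbf{P}}^\beta$-a.s.\ by a Borel--Cantelli argument that feeds the stable Green-function bound (Lemma~\ref{L:2.4}) and a Tanaka-construction lower bound $V(\omega_k^\beta)\gtrsim k^\epsilon$ into the moment condition \eqref{stable3}, and finish with the branching-property fixed-point dichotomy for $\mathbf{P}(D_\infty=0)$. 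The only real deviation is in Part~1: the paper fixes $\beta$ so that $D_n^\beta$ is an exact nonnegative martingale and only then lets $\beta\to\infty$, whereas your slowly growing barrier $\beta_n$ destroys the martingale property (your hedge ``or close to one'' is a wobble here); also your heuristic $\sum_n\mathbf{P}(\log_+X>cn^{1/\alpha})$ omits the extra factor $X$ (respectively $\widetilde X$) coming from the size-biased spine offspring law, which is what turns $(\log_+X)^\alpha$ into $X(\log_+X)^\alpha$ in \eqref{stable3}, and the paper gets the exponents from the Green-function estimate rather than from $V(\omega_n)\asymp n^{1/\alpha}$ directly --- but these are repairable details within the same approach.
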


\begin{theorem}\label{T:1.3}
Assume \eqref{C:1.1}, \eqref{stable1}, \eqref{stable2} and \eqref{stable3}. We have, under $\mathbf{P}^*$,
\begin{align}
\lim_{n\rightarrow\infty}n^{\frac{1}{\alpha}}W_n=\frac{\theta}{\Gamma(1-1/\alpha)}D_\infty
\quad\text{in probability}, \nonumber
\end{align}
where $D_\infty$ is given in Theorem~\ref{T:1.1}, and $\theta$ is a positive constant defined in (\ref{E:21}).
\end{theorem}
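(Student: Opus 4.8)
The plan is to follow the strategy of Aïdékon and Shi \cite{AS1}, replacing the central limit theorem input by the corresponding $\alpha$-stable local limit theorem. The starting point is the classical spinal decomposition: under a suitable change of measure $\mathbf{Q}$, the trajectory of the spine $(V(w_n))$ behaves like the associated centered random walk $(S_n)$ conditioned to stay positive, which under \eqref{stable1}--\eqref{stable2} lies in the domain of attraction of an $\alpha$-stable law. By the many-to-one lemma, one writes $\mathbf{E}^*(n^{1/\alpha}W_n\wedge M)$-type quantities in terms of the renewal function $R$ of the strictly descending ladder heights and a stable local limit theorem giving $\mathbf{P}(S_n\in dz)\sim \theta n^{-1/\alpha}p(0)\,dz$ on the appropriate scale, where $\theta$ is the normalizing constant in \eqref{E:21}. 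The factor $\Gamma(1-1/\alpha)$ will enter through the tail of the stable subordinator (equivalently, through $\int_0^\infty z^{-1/\alpha}e^{-z}\,dz$-type integrals appearing when one integrates the local limit estimate against the exponential weight), exactly as $(2/\pi\sigma^2)^{1/2}$ arose in Theorem C from the Gaussian case.

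First I would establish the convergence in $\mathbf{P}^*$-probability of $n^{1/\alpha}W_n$ to the candidate limit along the lines of the truncated second-moment method: fix a level and consider the contribution $W_n^{(\beta)}:=\sum_{|x|=n}e^{-V(x)}\mathbf{1}_{\{\underline V(x)\ge -\beta\}}$ from particles whose ancestral minimum stays above $-\beta$, where $\underline V(x):=\min_{y\le x}V(y)$. One shows (i) $n^{1/\alpha}(W_n-W_n^{(\beta)})\to 0$ in $\mathbf{P}^*$-probability as first $n\to\infty$ then $\beta\to\infty$, using the derivative-martingale convergence of Theorem \ref{T:1.1} together with \eqref{stable3} to control the "bad" particles; and (ii) $n^{1/\alpha}W_n^{(\beta)}$ is asymptotically concentrated. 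For (ii), the first-moment computation via many-to-one gives $\mathbf{E}^*\big(n^{1/\alpha}W_n^{(\beta)}\,\big|\,\mathcal F_k\big)\to \frac{\theta}{\Gamma(1-1/\alpha)}D_k^{(\beta)}$ for fixed $k$, where $D_k^{(\beta)}$ is the truncated derivative martingale; the delicate point is the second-moment (or rather truncated $L^2$) bound showing the conditional variance is negligible, which is where the $\alpha\in(1,2)$ regime forces genuinely new estimates compared with \cite{AS1}.

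The main obstacle, and the place where the stable exponent $\alpha$ really bites, is precisely this second-moment control. With only an $\alpha$-stable domain of attraction ($1<\alpha<2$) the associated random walk has no finite variance, so the Gaussian-type bounds $\mathbf{P}(S_n=z)\le C n^{-1/2}$ used in \cite{hs} and \cite{AS1} must be replaced by $\mathbf{P}(S_n\approx z)\le C n^{-1/\alpha}$, and the many-to-two (spine with two distinguished particles) estimates must be run with the renewal function $R$ of a walk in the stable domain, whose growth $R(t)\asymp t$ (by the ladder-height structure) still holds but whose fluctuation bounds are weaker. One must carefully handle the branching of the two spines: the dominant contribution comes from pairs separating early and then each performing an independent stable-type excursion, contributing $(n^{1/\alpha})^2\cdot n^{-1/\alpha}\cdot n^{-1/\alpha}=O(1)$ per unit of branching time, and one needs summability of the resulting series, which is exactly why \eqref{stable3} is stated with exponents $\alpha$ and $\alpha-1$ rather than $2$ and $1$, and why, as the authors note, the slowly varying function in \eqref{stable2} had to be taken constant. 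Finally, one upgrades "along fixed $k$, then $k\to\infty$" to the full statement by a standard diagonal/maximal-inequality argument, identifying the limit as $\frac{\theta}{\Gamma(1-1/\alpha)}D_\infty$ via Theorem \ref{T:1.1}.
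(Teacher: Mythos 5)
Your high-level plan (truncate at level $-\beta$, tilt the measure, spine decomposition, first/second moment under the tilted law, then transfer back to $\mathbf{P}^*$ via the comparison between $D_n^\beta$ and $\theta(D_n+\beta W_n)$ on $\Omega_\beta$) is the correct skeleton and is essentially what the paper does. You also correctly locate where $\theta$ and $\Gamma(1-1/\alpha)$ come from, and correctly anticipate that the $\alpha$-stable tail estimates and the $(\alpha-1)$-exponent in the renewal function are what replace the Gaussian inputs.

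However, there is a real gap in your treatment of the second moment. You propose a ``many-to-two'' computation with two distinguished spines, i.e.\ estimating the conditional variance of $n^{1/\alpha}W_n^{(\beta)}$ given $\mathcal F_k$ directly. That is not what the paper (nor Aïdékon--Shi) does, and it would not cleanly deliver the sharp constant, because $W_n^\beta$ itself fluctuates on the scale of $D_n^\beta$, which is random. The device you are missing is the ratio identity \eqref{Wbn}:
\begin{align}
\frac{W_n^\beta}{D_n^\beta}=\hat{\mathbf{E}}^\beta\!\left(\frac{1}{R_\beta(V(\omega_n^\beta))}\,\Big|\,\mathcal F_n\right),
\end{align}
which expresses the relevant ratio as a conditional expectation of a function of the spine position alone. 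With this, the first moment under $\hat{\mathbf{P}}^\beta$ reduces to $\mathbf{P}(\underline S_n\ge-\beta)/R(\beta)$ via Lemma 2.1, and the second moment is handled by splitting the tree at a cutoff generation $k_n$, introducing the good event $E_n=E_{n,1}\cap E_{n,2}\cap E_{n,3}$ (spine not too low, brothers' weights controlled, late-generation derivative mass small) in Lemmas \ref{L:5.4}--\ref{L:5.6}, and bounding $\hat{\mathbf{E}}^\beta\big(\frac{W_n^\beta}{D_n^\beta}\cdot\frac{\mathbf 1_{E_n}}{R_\beta(V(\omega_n^\beta))}\big)$ by another first-moment estimate plus negligible remainders. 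No genuine two-spine decomposition appears. You also conflate the two changes of measure: the measure used for Theorem \ref{T:1.3} is $\hat{\mathbf P}^\beta$ (tilting by the truncated derivative martingale $D_n^\beta$, under which the spine is $(S_n)$ conditioned to stay above $-\beta$), not $\mathbf Q$ (tilting by $W_n$, under which the spine is the unconditioned walk, used later for Theorem \ref{T:1.4}). Finally, the rationale you give for the exponents $\alpha$ and $\alpha-1$ in \eqref{stable3} (summability in a many-to-two series) is not the mechanism here; those exponents enter through the uniform integrability of $D_n^\beta$ (Theorem \ref{T:1.1}, via Lemma \ref{L:4.2}) and through Lemma \ref{L:2.4} applied to the brothers of the spine.
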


%\begin{remark}
%It is natural to ask whether an analogous result to {\bf Theorem B} holds in our setting. We will explore this study in some future work.
%\end{remark}
%\begin{remark} Under \eqref{C:1.1}, \eqref{C:1.2} and \eqref{C:1.3}, Aidekon and Shi \cite{AS1} further proved that $$\limsup_{n\rightarrow\infty}  n^{\frac{1}{2}}W_n\overset{a.s.}{=}\infty,$$ which implies   the convergence in probability in Theorem C is optimal and can not be strengthened to almost surely convergence. One may expect that almost surely $\limsup_{n\rightarrow\infty}  n^{\frac{1}{\alpha}}W_n=\infty$ under \eqref{C:1.1}, \eqref{stable1}, \eqref{stable2} and \eqref{stable3}. The discussions will be taken in a coming paper.
%\end{remark}

\begin{theorem}\label{T:1.4}
Assume \eqref{C:1.1}, \eqref{stable1}, \eqref{stable2} and \eqref{stable3}. We have,
\begin{align}
\limsup_{n\rightarrow\infty} \;n^{\frac{1}{\alpha}}W_n=\infty\;\;\;\mathbf{P}^*\!\!-\!\!a.s. \nonumber
\end{align}
\end{theorem}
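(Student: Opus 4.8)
The plan is to exploit the near-critical fluctuations of the additive martingale $W_n$ along a well-chosen sequence of times and to transfer the almost-sure statement to the whole process by a zero-one argument on the tree. The heuristic is that while $n^{1/\alpha}W_n$ converges \emph{in probability} to $\frac{\theta}{\Gamma(1-1/\alpha)}D_\infty$ by Theorem~\ref{T:1.3}, it cannot converge almost surely: a spine particle that makes an atypically low excursion at generation $n$ contributes a subtree whose descendants carry an anomalously large share of $W_{n+k}$ for $k$ of order $n$, and because the displacement law is in the domain of attraction of an $\alpha$-stable law (heavy left-moderate behaviour encoded by \eqref{stable1}--\eqref{stable2}), such excursions are frequent enough that $n^{1/\alpha}W_n$ must be unbounded.

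The key steps, in order, are as follows. First, I would use the spine decomposition (the size-biased change of measure associated with $W_n$, as in Lyons~\cite{Ly} and Biggins--Kyprianou~\cite{BK}) to write, under the spine measure $\mathbf{Q}$, $W_n = \sum_{|x|=n} e^{-V(x)}$ in terms of the spine trajectory $(V(\xi_k))_{k\le n}$ and the independent contributions of the bushes hanging off the spine. Second, for a fixed large integer $m$ and a scale parameter $r$, I would estimate the $\mathbf{Q}$-probability that at some generation $k\in[m, 2m]$ the spine sits below $-\tfrac{1}{\alpha}\log m - r$ while the walk stays ``not too high'' afterwards; using the local-limit/renewal estimates for random walks in the domain of attraction of an $\alpha$-stable law (the analogues of the Gaussian estimates of Hu--Shi~\cite{hs} and Aidekon--Shi~\cite{AS1}, developed for this setting in the earlier sections of the paper), this probability is bounded below by a positive constant independent of $m$. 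On that event one shows the contribution of the corresponding bush to $W_{2m}$ is at least $c\, e^{r} m^{-1/\alpha}$ with probability bounded away from $0$, hence $\limsup_n n^{1/\alpha} W_n \ge c\,e^{r}$ with positive $\mathbf{Q}$-probability, for every $r$. Third, I would pull this back to $\mathbf{P}$: since $\mathbf{Q}\ll\mathbf{P}$ on each $\mathcal{F}_n$ and the event $\{\limsup_n n^{1/\alpha}W_n=\infty\}$ is in the tail $\sigma$-field, a Lévy/Borel--Cantelli-type argument along a subsequence $2^j m$ (using independence of the bushes grafted at widely separated generations) upgrades ``positive probability for every $r$'' to ``probability one''. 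Finally, I would invoke a zero-one law: the event is inherited by every subtree, so by the branching property its $\mathbf{P}^*$-probability is either $0$ or $1$ on non-extinction, and the previous step rules out $0$; by Theorem~\ref{T:1.1}, $D_\infty>0$ $\mathbf{P}^*$-a.s., so the event is not emptied out by the degenerate case.

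The main obstacle I expect is Step 2: obtaining a \emph{uniform-in-$m$} lower bound for the relevant spine event and for the bush contribution in the stable regime. In the finite-variance case one has sharp Gaussian local limit theorems and the ballot-type estimates of Aidekon--Shi; here one must instead use the stable local limit theorem together with the asymptotics \eqref{stable1}--\eqref{stable2}, and control the many-to-one contribution of the bush conditioned on the spine path — in particular handling the heavy upper tail \eqref{stable2}, which is exactly why the integrability assumption \eqref{stable3} (with the exponents $\alpha$ and $\alpha-1$) enters. A secondary technical point is that, unlike the Brownian case, the overshoot of the stable walk does not have a nice closed form, so the constant produced in the lower bound will be implicit; but since Theorem~\ref{T:1.4} only claims $\limsup=\infty$, an implicit positive constant that can be taken arbitrarily large via the free parameter $r$ suffices.
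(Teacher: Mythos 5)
Your broad strategy—size-biased change of measure, look for anomalously low particles, then a zero-one argument on the tree—is indeed the flavour of the paper's proof, which first establishes (Theorem~\ref{T:7.2}) that $\liminf_n(\min_{|x|=n}V(x)-\tfrac{1}{\alpha}\log n)=-\infty$ $\mathbf{P}^*$-a.s.\ and then concludes trivially from $n^{1/\alpha}W_n\ge \exp\{\tfrac{1}{\alpha}\log n-\min_{|x|=n}V(x)\}$. But there are two genuine gaps in your proposal.

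First, the threshold you aim for is unattainable. You want a particle with $V(x)\le -\tfrac{1}{\alpha}\log m-r$, but the many-to-one formula gives $\mathbf{E}\bigl(\#\{|x|=n: V(x)\le -\tfrac{1}{\alpha}\log m-r\}\bigr)=\mathbf{E}\bigl(e^{S_n}\mathbf{1}_{\{S_n\le-\frac{1}{\alpha}\log m-r\}}\bigr)\le m^{-1/\alpha}e^{-r}$, which tends to $0$ as $m\to\infty$. So the $\mathbf{P}$-probability that such a particle exists vanishes, and no ``uniform-in-$m$'' lower bound is possible—even though under $\mathbf{Q}$ the spine does dip that low with probability bounded away from $0$ (the dip is only $O(\log m)=o(m^{1/\alpha})$). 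The resolution the paper uses is a two-step construction: Lemma~\ref{L:7.1} exhibits, with $\mathbf{P}$-probability $\ge c_{34}>0$ uniformly in $n$, a particle at the \emph{typical} minimal scale $V(x)\in[\tfrac{1}{\alpha}\log n,\tfrac{1}{\alpha}\log n+c_{18}]$; then in Theorem~\ref{T:7.2} one grafts onto that particle an $L$-generation excursion that drops by $J$ with fixed probability $c_{57}>0$, and iterates along geometric scales $n_k=(L+\alpha)^k$, letting $J\to\infty$ only at the very end.

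Second, the transfer from $\mathbf{Q}$ to $\mathbf{P}$ is not as you describe. The measures $\mathbf{Q}$ and $\mathbf{P}$ are mutually singular on $\mathcal{F}_\infty$ (indeed $W_n\to0$ $\mathbf{P}$-a.s.\ but not $\mathbf{Q}$-a.s.), so positive $\mathbf{Q}$-probability of a tail event does \emph{not} imply positive $\mathbf{P}$-probability; ``$\mathbf{Q}\ll\mathbf{P}$ on each $\mathcal{F}_n$'' buys nothing on the tail $\sigma$-field. What the paper actually does is a Paley--Zygmund argument for a counting variable $Z^{(n)}$: the spine decomposition under $\mathbf{Q}$ is used to bound $\mathbf{E}(Z^{(n)})$ from below and $\mathbf{E}((Z^{(n)})^2)$ from above (this is where the truncation sets $F_k$, the ballot-type estimates of Lemmas~\ref{L:2.3}, \ref{L:2.6}, \ref{L:2.10}, and the integrability \eqref{stable3} enter), yielding $\mathbf{P}(Z^{(n)}>0)\ge (\mathbf{E}Z^{(n)})^2/\mathbf{E}((Z^{(n)})^2)\ge c_{34}$. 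Without this second-moment control, your ``Lévy/Borel--Cantelli along $2^j m$'' has nothing to latch onto. Your zero-one argument at the end (inheritance of the event by subtrees) is correct and is indeed how the paper promotes positive $\mathbf{P}$-probability to $\mathbf{P}^*$-a.s.
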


\noindent{\bf Remark.} Theorem \ref{T:1.4} tells us that the convergence in probability in Theorem \ref{T:1.3} is optimal, which it can not be strengthened to almost surely convergence.

\section{Stable random walk}

In this section,  we first introduce an one-dimensional random walk associated with the branching random walk. Then we give some properties which are essential in the proofs of Theorems \ref{C:1.1}, \ref{C:1.2} and \ref{C:1.3}.

Throughout, for any vertex $x$, $x_i$ ($0\leq i\leq |x|$) denotes the ancestor of $x$ at the $i$-th generation (in particular, $x_0=\varnothing, x_{|x|}=x$).

\noindent{\bf The many-to-one formula.}
For $a\in\mathbb{R}$, we denote by $\mathbf{P}_a$ the probability distribution associated to the branching random walk $(V(x))$ starting from $a$, and $\mathbf{E}_a$ the corresponding expectation. Under \eqref{C:1.1}, there exists a  sequence of independently and identically distributed (i.i.d.) real-valued random variables $S_1,S_2-S_1,S_3-S_2,\ldots,$ such that for any $n\geq1, a\in\mathbb{R}$ and any measurable function $g:\mathbb{R}^n\rightarrow[0,\infty),$
\begin{align}
\mathbf{E}_a\Big(\sum_{|x|=n}g\big(V(x_1),\ldots,V(x_n)\big)\Big)=\mathbf{E}_a\Big(e^{S_n-a}g(S_1,\ldots,S_n)\Big), \nonumber
\end{align}
where, under $\mathbf{P}_a$, we have $S_0=a$ almost surely. We will write $\mathbf{P}$ and $\mathbf{E}$ instead of $\mathbf{P}_0$ and $\mathbf{E}_0$. Since $\mathbf{E}\big(\sum_{|x|=1}V(x)e^{-V(x)}\big)=0$, we have $\mathbf{E}(S_1)=0$. %Actually the centered random walk $(S_n, n\geq0)$ is a change-of-probabilities setting.
Under conditions \eqref{stable1} and \eqref{stable2},  $S_1$ belongs to the domain of attraction of a spectrally positive stable law with characteristic function
\begin{align}
G_{\alpha,-1}(t):=\exp\big\{-c_0|t|^\alpha\big(1-i\frac{t}{|t|}\tan{\frac{\pi\alpha}{2}}\big)\big\},\quad c_0>0. \nonumber
\end{align}
We denote by $S_1\in\mathcal{D}(\alpha,-1)$.

Next we recall some elementary properties of $(S_n)$ from existed literatures.

The (strict) \emph{descending ladder heights} of $(S_n)$ are $\,Z_0\!>\!Z_1\!>\!Z_2\!>\!\ldots\,$, if
$Z_k:=S_{\tau^-_k},$ with $\tau^-_0:=0$ and $\tau^-_k:=\inf\{i>\tau^-_{k-1}:S_i<\min_{0\leq j\leq \tau^-_{k-1}}S_j\}$, $k\geq1$. Let
\begin{align}
R(u):=\sum_{k=0}^\infty \mathbf{P}(|Z_k|\leq u). \nonumber
\end{align}
We know $Z_1,Z_2 - Z_1,Z_3 - Z_2,\ldots$ are also i.i.d. random variables. By Bingham \cite{bing}, $\mathbf{E}(Z_1)<\infty$. So the renewal theorem holds as follows
\begin{align}
\theta:=\lim_{u\rightarrow\infty} \frac{R(u)}{u}. \label{E:21}
\end{align}
Consequently, there exist constants $c_2\geq c_1>0$ such that
\begin{align}
c_1(1+u)\leq R(u) \leq c_2(1+u),  \quad u\geq0. \label{E:22}
\end{align}
In the following,  $c_3, c_4, c_5,... $ are positive constants.

Similarly we introduce the (strict) \emph{ascending ladder heights} $H_0\!<\!H_1\!<\!H_2\!<\!\ldots$, if
$H_k:=S_{T_k},$ with $T_0:=0$ and $T_k:=\inf\{i>T_{k-1}:S_i>\max_{0\leq j\leq T_{k-1}}S_j\}$, $k\geq1$, and define:
\begin{align}
K(u):=\sum_{k=0}^\infty \mathbf{P}\{|H_k|\leq u\}. \label{E:23}
\end{align}
Sinai \cite{Sinai} and Rogozin \cite{Rogo} proved that
\begin{align} \label{E:24}
\mathbf{P}(H_1\geq x)\sim \frac{c_3}{x^{\alpha-1}}, \quad x \rightarrow +\infty.
\end{align}
By Feller \cite[Chap. XIV, (3.4)]{FELL},
\begin{align}
K(x) \sim \frac{1}{\Gamma(1-\alpha\rho)\Gamma(1+\alpha\rho)}\cdot\frac{1}{\mathbf{P}(H_1\geq x)},\quad  x\rightarrow \infty. \nonumber
\end{align}
Substituting \eqref{E:24} into this equation yields
\begin{align}
K(x) \sim c_4x^{\alpha-1},\quad  x\rightarrow \infty. \label{E:25}
\end{align}
 As a consequence, there exist constants $c_6\geq c_5>0$ such that
\begin{align}
c_5(1+x)^{\alpha - 1}\leq K(x) \leq c_6(1+x)^{\alpha - 1},  \;\;x\geq0. \label{E:22b}
\end{align}

\begin{lemma}(Bingham \cite{bing2}).
We have for $x\ge 0$,
\begin{align}
&\mathbf{P}(\underline{S}_n\geq -x) \sim \frac{R(x)}{\,n^{\frac{1}{\alpha}}\Gamma(1-\frac{1}{\alpha})}\quad n\rightarrow\infty;\label{E:26}\\
&\mathbf{P}(\underline{-S}_n\geq -x) \sim \frac{K(x)}{\,n^{1-\frac{1}{\alpha}}\Gamma(\frac{1}{\alpha})}\quad n\rightarrow\infty,\label{E:27}
\end{align}
where $\underline{S}_n:=\min_{i\leq n}S_i$ and $\underline{-S}_n:=-\max_{i\leq n}S_i$.
\end{lemma}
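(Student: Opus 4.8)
I would first note that \eqref{E:27} follows from \eqref{E:26} by reflection. Writing $\rho:=1-1/\alpha$ for the positivity parameter of $S$, \eqref{E:26} reads $\mathbf P(\underline S_n\ge -x)\sim R(x)/\big(n^{1-\rho}\Gamma(\rho)\big)$. Now $\{\underline{-S}_n\ge -x\}=\{\max_{i\le n}S_i\le x\}$ is precisely the event that the reflected walk $(-S_k)_{k\ge0}$ stays above $-x$ up to time $n$; this walk lies in the domain of attraction of the spectrally negative $\alpha$-stable law, its positivity parameter is $1-\rho=1/\alpha$, and its strict descending ladder heights are exactly $(-H_k)_{k\ge0}$, so its descending-ladder renewal function is $K$. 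Hence \eqref{E:27} is just \eqref{E:26} written for $(-S_k)$, and it suffices to prove \eqref{E:26}.

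I would treat the case $x=0$ first. Conditions \eqref{stable1}--\eqref{stable2} give $S_1\in\mathcal D(\alpha,-1)$, so that $\mathbf P(S_n>0)\to\rho$ (equivalently Spitzer's condition holds, by Doney's criterion) and the law of $S_1$ is non-lattice. By the Sparre--Andersen identity,
\begin{align}
\sum_{n\ge0}\mathbf P(\underline S_n\ge0)\,s^n=\exp\Big(\sum_{n\ge1}\frac{s^n}{n}\,\mathbf P(S_n\ge0)\Big)=\frac{L\big(1/(1-s)\big)}{(1-s)^{\rho}},\nonumber
\end{align}
with $L$ slowly varying. Karamata's Tauberian theorem then gives $\sum_{k\le n}\mathbf P(\underline S_k\ge0)\sim n^{\rho}L(n)/\Gamma(1+\rho)$, and, $n\mapsto\mathbf P(\underline S_n\ge0)$ being non-increasing, the monotone density theorem yields $\mathbf P(\underline S_n\ge0)\sim n^{\rho-1}L(n)/\Gamma(\rho)$, which is of order $n^{-1/\alpha}$. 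Identifying the constant as $1/\Gamma(1-1/\alpha)$ --- equivalently, showing $L(\infty)=1$ --- is the one genuinely delicate point at this stage: it is immediate for a strictly $\alpha$-stable walk, since then $\mathbf P(S_n\ge0)\equiv\rho$, and in the general domain-of-attraction case it follows by comparison with the corresponding supremum / first-passage asymptotic for the limiting stable process, which is classical and is exactly Bingham's contribution. Since $R(0)=1$, this settles \eqref{E:26} at $x=0$.

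For general $x>0$ I would pass from $x=0$ to $x>0$ by means of the renewal function $R$. The relevant classical fact from fluctuation theory is that $R$ is harmonic for the walk killed on first entering $(-\infty,0)$, that is $\mathbf E\big[R(x+S_1)\mathbf{1}_{\{x+S_1\ge0\}}\big]=R(x)$ for $x\ge0$; equivalently, $k\mapsto R(x+S_k)\mathbf{1}_{\{\underline S_k\ge -x\}}$ is a $\mathbf P$-martingale, which gives the exact identity $R(x)=\mathbf E\big[R(x+S_n)\mathbf{1}_{\{\underline S_n\ge -x\}}\big]$ for every $n$. Using $R(u)\sim\theta u$ from \eqref{E:21}--\eqref{E:22}, together with the fact that conditionally on $\{\underline S_n\ge -x\}$ the rescaled path $(a_n^{-1}S_{\lfloor nt\rfloor})_{t\in[0,1]}$ converges (with $a_n\asymp n^{1/\alpha}$) to the normalized $\alpha$-stable meander, the right-hand side is asymptotically $\theta a_n\,\mathbf P(\underline S_n\ge -x)$ times a universal constant, namely the mean of the meander at time $1$; running the same computation at $x=0$ (where $R(0)=1$) identifies that constant, and dividing gives $\mathbf P(\underline S_n\ge -x)\sim R(x)\,\mathbf P(\underline S_n\ge0)$, hence \eqref{E:26}. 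The main obstacle is exactly this last transfer: making it rigorous requires a strong (local) limit theorem for the random walk conditioned to stay above $-x$ --- in the spirit of Vatutin--Wachtel or Doney --- together with the careful bookkeeping of constants needed to produce the exact factor $1/\Gamma(1-1/\alpha)$ and not merely the order $n^{-1/\alpha}$; this is why we simply quote Bingham's theorem \cite{bing2} rather than reprove it.
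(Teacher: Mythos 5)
The paper does not give a proof of this lemma: it is stated as a citation to Bingham \cite{bing2}, so there is no ``paper's proof'' to compare against. Your sketch is a reasonable reconstruction of how such a result is obtained. The reflection reduction of \eqref{E:27} to \eqref{E:26} is correct: the strict descending ladder heights of $(-S_k)$ are $(-H_k)$, so its descending-ladder renewal function is $K$, and its positivity parameter is $1-\rho=1/\alpha$. The $x=0$ case via the Sparre--Andersen identity, Karamata's Tauberian theorem, and the monotone density theorem is the standard route, and the passage to general $x$ via the harmonicity of $R$ (the identity $R(x)=\mathbf E[R(x+S_n)\mathbf 1_{\{\underline S_n\ge -x\}}]$) is the right mechanism.

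That said, the two places you flag are genuine gaps, not just matters of bookkeeping. To conclude $\mathbf P(\underline S_n\ge 0)\sim n^{\rho-1}/\Gamma(\rho)$ with no residual slowly varying factor, one needs $\sum_{n\ge1}n^{-1}\big(\mathbf P(S_n\ge0)-\rho\big)$ to converge; this is exactly where the hypothesis that the tail in \eqref{stable2} is $c/y^\alpha$ with a \emph{constant} $c$ (rather than a general slowly varying function) enters, and the paper's remark that $c$ cannot be replaced by a slowly varying function is pointing at this. And the transfer to $x>0$ via the meander limit, as you note, requires either a local/functional limit theorem for the conditioned walk or the second-order estimates of \cite{bing2}. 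So the proposal correctly identifies the skeleton of the argument but, by its own admission, delegates the quantitative heart of it to Bingham --- which is also what the paper itself does.
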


\begin{lemma}\label{L:2.2}
There exists $c_7>0$ such that for $a\geq0, b\geq-a$ and $n\geq1$,
\begin{align}
\mathbf{P}\big\{b\leq S_n\leq b+1, \underline{S}_n\geq-a\big\} \leq c_7\,\cdot\,\frac{\;(1+a)(1+a+b)^{\alpha-1}}{n^{1+\frac{1}{\alpha}}}.
\end{align}
\end{lemma}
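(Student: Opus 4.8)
The plan is to reduce the two-sided constraint $\{b\le S_n\le b+1,\ \underline S_n\ge -a\}$ to the product of two one-sided ladder-height estimates via a standard time-reversal and Markov-property decomposition, then invoke \eqref{E:26}, \eqref{E:27}, \eqref{E:22} and \eqref{E:22b}. First I would split the trajectory at time $m:=\lfloor n/2\rfloor$. Conditioning on $S_m$ and using the Markov property,
\begin{align}
\mathbf{P}\big\{b\le S_n\le b+1,\ \underline S_n\ge -a\big\}
=\mathbf{E}\Big(\mathbf{1}_{\{\underline S_m\ge -a\}}\,\varphi_{n-m}(S_m;a,b)\Big),\nonumber
\end{align}
where $\varphi_k(s;a,b):=\mathbf{P}_s\{b\le S_k\le b+1,\ \underline S_k\ge -a\}$ and the walk started from $s$ means $S_0=s$. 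On the event $\{\underline S_m\ge -a\}$ one has $S_m\ge -a$, and by reversing time on the second half the inner probability is bounded by the probability that a walk started from $0$ stays below its terminal value; concretely $\varphi_{n-m}(s;a,b)\le \mathbf{P}\{\,\underline{-S}_{n-m}\ge -(s+a)\,\}\cdot\sup_x \mathbf{P}\{x\le S_{n-m}\le x+1\}$ after another split, or more cleanly one bounds the whole thing by
\begin{align}
\mathbf{P}\big\{b\le S_n\le b+1,\ \underline S_n\ge -a\big\}
\le \mathbf{P}\big\{\underline S_m\ge -a\big\}\;\sup_{s\ge -a}\mathbf{P}_s\big\{\underline{S}^{\,\mathrm{rev}}\text{-type event}\big\},\nonumber
\end{align}
so that after the dust settles the right-hand side factors as (constant)$\times \mathbf{P}\{\underline S_m\ge -a\}\times \mathbf{P}\{\underline{-S}_{n-m}\ge -(a+b)\}\times (\text{local CLT factor }n^{-1/\alpha})$.

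The cleanest route, which I would actually carry out, is the classical three-block decomposition: pick $j=\lfloor n/3\rfloor$ and write the event as lying in $[0,j]$, $[j,n-j]$ and $[n-j,n]$. On $[0,j]$ demand $\underline S_j\ge -a$; by the Markov property and time reversal on $[n-j,n]$ demand the reversed walk from the endpoint stays above $-(a+b)$ (using that the endpoint is in $[b,b+1]$ and the minimum over the whole path is $\ge -a$, so the reversed path from height $\approx b$ stays $\ge -a$, i.e. drops by at most $a+b$); and on the middle block $[j,n-j]$ use a local limit bound $\mathbf{P}\{x\le S_{n-2j}\le x+1\}\le c\,n^{-1/\alpha}$ uniformly in $x$, valid since $S_1\in\mathcal D(\alpha,-1)$ with normalization $n^{1/\alpha}$. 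Multiplying the three bounds and using $n-2j\asymp n$, $j\asymp n$:
\begin{align}
\mathbf{P}\big\{b\le S_n\le b+1,\ \underline S_n\ge -a\big\}
\le c\,\frac{R(a)}{n^{1/\alpha}}\cdot\frac{1}{n^{1/\alpha}}\cdot\frac{K(a+b)}{n^{1-1/\alpha}}
=c\,\frac{R(a)\,K(a+b)}{n^{1+1/\alpha}}.\nonumber
\end{align}
Finally, apply the linear bound \eqref{E:22}, $R(a)\le c_2(1+a)$, and the bound \eqref{E:22b}, $K(a+b)\le c_6(1+a+b)^{\alpha-1}$ (here $a+b\ge 0$ by hypothesis $b\ge -a$), to get the claimed $c_7(1+a)(1+a+b)^{\alpha-1}n^{-1-1/\alpha}$.

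The main obstacle is making the time-reversal step rigorous and uniform: one must verify that the inner probability on the last block, after conditioning on the endpoint $S_{n-j}$ and on the global minimum constraint, genuinely dominates a shifted one-sided quantity to which \eqref{E:27} applies, with constants independent of $a,b,n$. In particular the local-limit estimate $\sup_x \mathbf{P}\{x\le S_m\le x+1\}\le c\,m^{-1/\alpha}$ for $m\asymp n$ needs the domain-of-attraction hypothesis together with a Stone-type local limit theorem (or a concentration-function bound), and one has to check it holds uniformly down to $m$ of order $n$ rather than only asymptotically; handling the regime where $n$ is small or where $a,b$ are comparable to $n^{1/\alpha}$ (so that the asymptotic equivalences in \eqref{E:26}–\eqref{E:27} must be replaced by the two-sided bounds \eqref{E:22}, \eqref{E:22b}) is where the care is needed, but it is routine once the decomposition is set up.
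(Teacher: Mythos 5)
Your proposal is correct and follows essentially the same route as the paper: a three-block split (at times $\asymp n/3$ and $\asymp 2n/3$), with the ladder estimate \eqref{E:26}--\eqref{E:22} on the first block, Stone's local limit bound $\sup_x\mathbf{P}\{x\le S_m\le x+c_8\}\le c_9 c_8 m^{-1/\alpha}$ on the middle block, and a time reversal on the last block reducing it to the ascending ladder estimate \eqref{E:27}--\eqref{E:22b} for $-S$ started near height $b$ with floor $-a$. The paper's version reverses the last $2k$ steps first and then re-splits those into the ladder-plus-LLT pieces, but the resulting product $R(a)K(a+b)n^{-1-1/\alpha}$ is identical, and your observation that the asymptotic relations must be promoted to uniform two-sided bounds via \eqref{E:22}, \eqref{E:22b} is exactly the care the paper implicitly takes.
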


\begin{proof}
The proof is similar to Aidekon and Shi \cite{AS2}.
We only prove the case of  $n=3k$, $k\ge 1$. A similar argument works for the cases of $n=3k+1$  and $n=3k+2$.

According to Stone's local limit theorem,  there exist $c_8>0$ and $c_9>0$ such that $\forall\; h\geq c_8$ and $n\geq 1$,
\begin{align}\label{E:29}
\sup_{r\in\mathbb{R}}\mathbf{P}(r\leq \pm S_n\leq r+h)\leq c_9\cdot\frac{h}{n^{\frac{1}{\alpha}}}.
\end{align}
By the Markov property at time $k$, we have
\begin{align}
&\mathbf{P}\big\{b\leq S_{3k}\leq b+c_8, \underline{S}_{3k}\geq -a\big) \nonumber\\
&\quad\leq\mathbf{P}(\underline{S}_k\geq -a)\sup_{x\geq-a}\mathbf{P}(b-x\leq S_{2k}\leq b-x+c_8, \underline{S}_{2k}\geq -a-x) .
\end{align}
Let $\widetilde{S}_j:=S_{2k-j}-S_{2k}$. Then \beqnn&&\mathbf{P}(b-x\leq S_{2k}\leq b-x+c_8, \underline{S}_{2k}\geq -a-x)\cr&&\quad\leq \mathbf{P}(-b+x-c_8\leq \widetilde{S}_{2k}\leq -b+x, \;\min_{1\leq i\leq 2k} \widetilde{S}_i\geq -a-b-c_8). \eeqnn Applying
 the Markov property, for $x\geq-a$,
\begin{align}
&\mathbf{P}(b-x\leq S_{2k}\leq b-x+c_8, \underline{S}_{2k}\geq -a-x)\nonumber\\
&\quad\leq\mathbf{P}(\min_{1\leq i\leq k}\widetilde{S}_i\geq -a-b-c_8)\sup_{y\in\mathbb{R}}\mathbf{P}(-b+x-c_8-y\leq \widetilde{S}_k\leq -b+x-y).
\end{align}
Then \eqref{E:22} and \eqref{E:25}, together with \eqref{E:26},  \eqref{E:27}  and \eqref{E:29}, yield the Lemma.
\qed
\end{proof}

\begin{lemma}\label{L:2.3}
There exists a constant $c_{10}>0$ such that for any $b\geq-a$ and $n\geq1$,
\begin{align}
\mathbf{P}(S_n\leq b,\,\underline{S}_n\geq-a)\leq c_{10}\cdot \frac{\;(1+a)(1+a+b)^\alpha}{n^{1+\frac{1}{\alpha}}}.\nonumber
\end{align}
\end{lemma}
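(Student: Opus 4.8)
The plan is to deduce Lemma~\ref{L:2.3} from Lemma~\ref{L:2.2} by summing over a dyadic-type range of values for $S_n$, exactly in the spirit of how a tail estimate follows from a local estimate. First I would fix $a\ge 0$ and $b\ge -a$, set $m:=\lceil a+b\rceil\ge 0$, and write the event $\{S_n\le b,\ \underline S_n\ge -a\}$ as a disjoint union over the integer windows $\{b-j-1< S_n\le b-j\}$ for $j=0,1,\dots$; on each such window we of course still have $\underline S_n\ge -a$, so in particular $S_n\ge -a$, which forces the sum to terminate at $j\le a+b$, i.e.\ there are at most $O(1+a+b)$ non-empty windows. Writing $b':=b-j-1\ge -a-1$ (up to harmless $\pm 1$ adjustments in the constant $c_7$, which I would absorb by enlarging $c_{10}$), Lemma~\ref{L:2.2} applied with this $b'$ gives
\begin{align}
\mathbf{P}\big\{b-j-1\le S_n\le b-j,\ \underline S_n\ge -a\big\}\le c_7\cdot\frac{(1+a)(1+a+b-j)^{\alpha-1}}{n^{1+\frac1\alpha}}. \nonumber
\end{align}

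Next I would sum this bound over $0\le j\le \lceil a+b\rceil$. The factor $(1+a)/n^{1+1/\alpha}$ pulls out, and what remains is $\sum_{j=0}^{\lceil a+b\rceil}(1+a+b-j)^{\alpha-1}$, which by monotonicity is bounded by $\int_0^{1+a+b}(1+t)^{\alpha-1}\,dt \le \tfrac1\alpha (2+a+b)^\alpha \le c\,(1+a+b)^\alpha$ since $\alpha>1$ and $a+b\ge 0$. Collecting terms yields
\begin{align}
\mathbf{P}(S_n\le b,\ \underline S_n\ge -a)\le c_{10}\cdot\frac{(1+a)(1+a+b)^\alpha}{n^{1+\frac1\alpha}}, \nonumber
\end{align}
which is the claim. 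One small point to handle carefully: Lemma~\ref{L:2.2} is stated for $b\ge -a$, so when I shift to $b'=b-j-1$ I must make sure $b'\ge -a$; for the last window this may fail by at most one unit, in which case I would either extend Lemma~\ref{L:2.2} trivially to $b'\ge -a-1$ (the proof there is insensitive to such a shift, or one can enlarge the window length $c_8$) or simply bound that last window by $\mathbf{P}(\underline S_n\ge -a)$ and invoke \eqref{E:26} together with \eqref{E:22}, which gives a term of order $(1+a)/n^{1/\alpha}\le (1+a)(1+a+b)^\alpha/n^{1+1/\alpha}$ after crudely using $n\ge 1$ — wait, that last inequality is false for large $n$, so in fact I would really want the window-shift route (extend Lemma~\ref{L:2.2} to $b\ge -a-1$), which is the clean fix.

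The only genuine obstacle here is bookkeeping rather than ideas: making the off-by-one in the ladder of windows rigorous, and checking that the geometric/polynomial sum $\sum_j (1+a+b-j)^{\alpha-1}$ really produces the exponent $\alpha$ (and not $\alpha-1$ or $\alpha+1$) in the final bound — this is where the hypothesis $\alpha>1$ is used, ensuring the summand is increasing in $a+b-j$ so that the integral comparison gives a clean $(1+a+b)^\alpha$. Everything else is a direct application of Lemma~\ref{L:2.2}, so I expect the proof to be short, essentially "partition, apply Lemma~\ref{L:2.2}, sum the geometric-type series."
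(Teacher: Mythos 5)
Your argument is correct and is exactly how the paper's one-line proof (``immediate from Lemma~\ref{L:2.2}'') is meant to be unpacked: partition $\{S_n\le b,\,\underline S_n\ge -a\}$ into roughly $1+a+b$ unit windows, apply Lemma~\ref{L:2.2} to each, and sum the resulting $(1+a+b-j)^{\alpha-1}$ terms to pick up one extra power of $1+a+b$. For the last (possibly partial) window a cleaner fix than extending Lemma~\ref{L:2.2} is simply to observe that, on $\{\underline S_n\ge-a\}$, that window is contained in $\{-a\le S_n\le -a+1\}$, so Lemma~\ref{L:2.2} with $b=-a$ applies verbatim.
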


\begin{proof}
It is immediate from Lemma \ref{L:2.2}.\qed
\end{proof}

The following result is also an extension of Aidekon \cite[Lemma B.2]{AI}.

\begin{lemma}\label{L:2.4}
There exists a constant $c_{11}>0$ such that for any $z\geq0$ and $x>0$,
\begin{align}
\sum_{l\geq0}\mathbf{P}_z(S_l\leq x,\,\underline{S}_l\geq0)\leq c_{11}\,(1+x)^{\alpha-1}(1+\min(x,z)).\nonumber
\end{align}
\end{lemma}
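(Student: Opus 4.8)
The plan is to reduce the sum over $l$ to a sum over the descending ladder structure of $(S_l)$ together with an excursion-counting argument, exactly in the spirit of Aidekon \cite[Lemma B.2]{AI}, but now with the $\alpha$-stable renewal estimates \eqref{E:22}, \eqref{E:22b}, \eqref{E:26}, \eqref{E:27} replacing the finite-variance ones. First I would split the range of $x$: the case $z \le x$ and the case $z > x$ will be handled slightly differently, since the right-hand side behaves like $(1+x)^{\alpha-1}(1+z)$ in the former and $(1+x)^\alpha$ in the latter. In both regimes the basic idea is the same: decompose the walk path according to its successive strict descending ladder epochs $\tau^-_k$ below the starting level (or, more precisely, use a $h$-transform / Doob-type change of measure with the harmonic function $R(\cdot)$ that makes the walk killed below $0$ into a genuine Markov chain), so that the constraint $\underline{S}_l \ge 0$ is automatically encoded.

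Concretely, I would first invoke the standard identity that relates $\sum_{l\ge0}\mathbf{P}_z(S_l\le x,\underline S_l\ge0)$ to the Green's function of the walk killed on leaving $[0,\infty)$, and then bound that Green's function using the representation in terms of the renewal function $R$. Using \eqref{E:26}, for each fixed $l$ one has $\mathbf{P}_z(S_l\le x,\underline S_l\ge0)\asymp$ something of order $R(x)R(z)/(l^{1+1/\alpha}\,\cdot\,\text{const})$ up to corrections — more carefully, one writes $\mathbf{P}_z(S_l\le x,\underline S_l\ge 0)$ via a first-and-last passage decomposition (reverse the walk, as in the proof of Lemma \ref{L:2.2}): the walk must come down from $z$ staying above $0$, sit near some level $\le x$, and the time-reversed piece must stay above $0$ as well. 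This produces a bound of the form $\sum_l \sum_{j\le x} \mathbf{P}_z(\underline S_{l_1}\ge 0, S_{l_1}\approx j)\cdot\mathbf{P}(\text{reversed walk from } j \text{ stays} \ge 0)$, and summing the two factors over $l_1$ and $l-l_1$ separately using \eqref{E:26}–\eqref{E:27} gives convergent series whose value is controlled by $R(\min(x,z))\cdot K(x)$. Since $R(u)\asymp 1+u$ by \eqref{E:22} and $K(x)\asymp (1+x)^{\alpha-1}$ by \eqref{E:22b}, this is exactly $(1+\min(x,z))(1+x)^{\alpha-1}$, as claimed.

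I expect the main obstacle to be controlling the sum over $l$ uniformly — that is, making the informal "$\asymp l^{-1-1/\alpha}$" decay rigorous with the right dependence on $x$ and $z$. The asymptotics \eqref{E:26}, \eqref{E:27} are only stated as $n\to\infty$ for fixed $x$, so one needs a uniform-in-$x$ (and $z$) upper bound of the same shape; this should follow from Lemma \ref{L:2.2}/Lemma \ref{L:2.3} after a dyadic or first-passage splitting of $l$, but it requires care in the regime where $x$ and $z$ are comparable to a power of $l$. A secondary nuisance is handling small $l$ (where the local limit theorem \eqref{E:29} needs $h\ge c_8$) and patching the three residue classes of $l$ mod $3$ as in Lemma \ref{L:2.2}; these are routine. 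Once the uniform bound on each term is in hand, summing the geometric-like series in $l$ and combining with \eqref{E:22} and \eqref{E:22b} finishes the proof.
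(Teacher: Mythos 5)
Your proposal sketches a heavier route than the paper actually takes, though you land on the right ingredients in the end. The paper's proof is a direct two-case argument using only Lemma \ref{L:2.3} and (an upper-bound consequence of) \eqref{E:26}: when $x<z$, apply the strong Markov property at $\tau_x:=\inf\{n: S_n\le x\}$ to reduce to $\mathbf{E}\big(\sum_{l\ge0}\mathbf{1}_{\{S_l\le x,\,\underline S_l\ge -x\}}\big)$, bound the first $\lfloor x^\alpha\rfloor$ terms trivially by $1$, and bound the tail with Lemma \ref{L:2.3}, getting $c(1+x)^\alpha$; when $x\ge z$, split the sum at $l\asymp x^\alpha$, bound the first block by $\sum_{l\le x^\alpha}\mathbf{P}_z(\underline S_l\ge 0)\lesssim (1+z)\sum_{l\le x^\alpha}l^{-1/\alpha}\asymp(1+z)(1+x)^{\alpha-1}$ using \eqref{E:26}, and bound the tail by Lemma \ref{L:2.3} again. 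No ladder-epoch decomposition, no Green's function identity, and no time reversal beyond what is already baked into Lemma \ref{L:2.2}. What you flagged as the ``secondary nuisance'' — splitting the sum in $l$ and invoking Lemma \ref{L:2.2}/\ref{L:2.3} — is in fact the whole proof, and the ladder/Doob-transform machinery you lead with is unnecessary for this lemma. Both routes should yield the same bound; the paper's is shorter because the two cases are each killed by a single truncation in $l$, and the case $x<z$ is handled cleanly by the stopping time $\tau_x$ (first passage below $x$) rather than by ascending/descending ladder epochs. One small thing to watch in your writeup: \eqref{E:26} and \eqref{E:27} are stated as asymptotics for fixed $x$, so you do need, as you noted, a uniform upper bound of the same shape; the paper uses this in the form $\mathbf{P}_z(\underline S_l\ge 0)\le c_{13}(1+z)l^{-1/\alpha}$ for all $l$, which requires a brief justification that you should not skip.
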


\begin{proof}
 We first consider $x<z$. Define $\tau_x=\inf\{n,S_n\leq x\}$. Then we have
\begin{align}
\sum_{l\geq0}\mathbf{P}_z(S_l\leq x,\underline{S}_l\geq0)&=\mathbf{E}_z\Big(\sum_{l\geq\tau_x}\mathbf{1}_{\{S_l\leq x,\underline{S}_l\geq0\}}\Big)\nonumber\\
&\leq \mathbf{E}\Big(\sum_{l\geq0}\mathbf{1}_{\{S_l\leq x,\underline{S}_l\geq -x\}}\Big),\nonumber
\end{align}
where we used the Markov property at time $\tau_x$. We obtain, by Lemma \ref{L:2.3},
\begin{align}\label{E:2.12}
\mathbf{E}\Big(\sum_{l\geq0}\mathbf{1}_{\{S_l\leq x,\underline{S}_l\geq -x\}}\Big)&\leq 1+x^\alpha+\sum_{l> x^\alpha}\mathbf{P}(S_l\leq x,\underline{S}_l\geq -x)\nonumber\\
&\leq 1+x^\alpha+c_{10} 2^\alpha\cdot\sum_{l>x^\alpha}\frac{(1+x)^{\alpha+1}}{l^{1+\frac{1}{\alpha}}}\nonumber\\
&\leq c_{12}\cdot(1+x)^\alpha.
\end{align}
Next, we consider $x\geq z$. Then
\begin{align}
\sum_{l\geq0}\mathbf{P}_z(S_l\leq x,\underline{S}_l\geq0)\leq\sum_{l\leq x^\alpha}\mathbf{P}_z(\underline{S}_l\geq0)+\sum_{l>x^\alpha}\mathbf{P}_z(S_l\leq x,\underline{S}_l\geq0).\nonumber
\end{align}
 \eqref{E:26} implies $\mathbf{P}_z(\underline{S}_l\geq0)\leq c_{13}\cdot\frac{(1+z)}{l^\frac{1}{\alpha}}$ for any $l\geq0$. Again, using Lemma \ref{L:2.3} gives
\begin{align}
\mathbf{P}_z(S_l\leq x,\underline{S}_l\geq0)\leq c_{10}\cdot \frac{(1+z)(1+x)^\alpha}{l^{1+\frac{1}{\alpha}}}.\nonumber
\end{align}
Thus
\begin{align}
\sum_{l\geq0}\mathbf{P}_z(S_l\leq x,\underline{S}_l\geq0)&\leq c_{13}\cdot\sum_{l\leq x^\alpha}\frac{(1+z)}{l^\frac{1}{\alpha}}+c_{8}\sum_{l>x^\alpha}\frac{(1+z)(1+x)^\alpha}{l^{1+\frac{1}{\alpha}}}\nonumber\\
&\leq c_{14}(1+z)(1+x)^{\alpha-1},
\end{align}
which, together with \eqref{E:2.12}, completes the proof.
\qed

\end{proof}

\begin{lemma} \label{L:2.5}
Let $\{d_n\}$ be a sequence of positive numbers such that $d_n=o(n^{1/\alpha})$. Then for any bounded continuous function $f$, we have
\begin{align}
E\Big(f\left(\frac{S_n+x}{n^{1/\alpha}}\right)\mathbf{1}_{\{\underline{S}_n\geq-x\}}\Big)=\frac{R(x)}{\,\Gamma(1-\frac{1}{\alpha})n^{1/\alpha}}\Big(\int_0^\infty f(t)p_{\alpha}(t)dt+o_n(1)\Big), \label{E:212}
\end{align}
uniformly in $u\in[0,d_n]$, where $p_{\alpha}$ is the density function of a nonnegative random variable $M_{\alpha}$. And $M_{\alpha}$ satisfies
\begin{align}
\lim_{n\rightarrow\infty}\mathbf{P}\Big(\frac{S_n}{n^{1/\alpha}}\in[u_1,u_2)\Big|\underline{S}_n>0\Big)=\mathbf{P}\big(M_\alpha\in[u_1,u_2)\big). \label{E:213}
\end{align}
Moreover, \begin{align}
\mathbf{E}\,(M_\alpha)=\frac{\Gamma(1-\frac{1}{\alpha})}{\theta}.
\label{EMa}
\end{align}
\end{lemma}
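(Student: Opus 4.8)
The plan is to prove the three assertions in sequence, deriving the limiting law \eqref{E:213} first, then upgrading it to the uniform local-type estimate \eqref{E:212}, and finally extracting the moment identity \eqref{EMa} as a consequence. For \eqref{E:213}, I would invoke the known conditioned-to-stay-positive limit theorem for random walks in the domain of attraction of a stable law: since $S_1\in\mathcal{D}(\alpha,-1)$ with $\mathbf{E}(S_1)=0$, the pair $\big(\underline{S}_n>0,\ S_n/n^{1/\alpha}\big)$ behaves, after conditioning, like the value at time $1$ of the associated stable meander (or the stable process conditioned to stay positive). Concretely, writing $\mathbf{P}(\underline{S}_n>0)\sim R(0)/(\Gamma(1-1/\alpha)n^{1/\alpha})$ by \eqref{E:26} with $x=0$, one has $\mathbf{E}\big(F(S_n/n^{1/\alpha})\mathbf{1}_{\{\underline S_n>0\}}\big)\big/\mathbf{P}(\underline S_n>0)\to\mathbf{E}\,F(M_\alpha)$ for bounded continuous $F$, where $M_\alpha$ is the (strictly positive a.s., absolutely continuous) value of the stable meander at time $1$; this is the classical Doney/Bingham functional limit theorem for ladder structure, and \eqref{E:24}--\eqref{E:27} are exactly the ingredients that make the normalizations match. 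This gives \eqref{E:213} and the existence of the density $p_\alpha$.

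For the uniform statement \eqref{E:212}, I would combine the $x=0$ case just described with a shift argument handling general $u\in[0,d_n]$, $d_n=o(n^{1/\alpha})$. The idea is to decompose the path at the first strict descending ladder epoch and iterate, writing
\begin{align}
\mathbf{E}\Big(f\Big(\tfrac{S_n+x}{n^{1/\alpha}}\Big)\mathbf{1}_{\{\underline S_n\ge -x\}}\Big)
=\sum_{k\ge 0}\mathbf{E}\Big(\mathbf{1}_{\{|Z_k|\le x\}}\ \Phi_{n,k}(x-|Z_k|)\Big),\nonumber
\end{align}
where $\Phi_{m,j}$ is a remaining-path functional on the event of staying above the current minimum; the factor $R(x)=\sum_k\mathbf{P}(|Z_k|\le x)$ emerges precisely as in the proof of \eqref{E:26}. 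Because $f$ is bounded and continuous and $x\le d_n=o(n^{1/\alpha})$, the rescaled increment $x/n^{1/\alpha}\to 0$, so $f\big((S_n+x)/n^{1/\alpha}\big)$ may be replaced by $f\big(S_n/n^{1/\alpha}\big)$ up to an $o_n(1)$ error uniform in $x\in[0,d_n]$ (uniform continuity of $f$ on compacts plus the tightness of $S_n/n^{1/\alpha}$ on the conditioned event), and one is reduced to the $x=0$ renewal estimate. The error terms are controlled by Lemma \ref{L:2.2} and \eqref{E:26}--\eqref{E:27}, exactly in the style of Aidekon--Shi \cite{AS2} and Aidekon \cite[Lemma B.2]{AI}. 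The integral $\int_0^\infty f(t)p_\alpha(t)\,dt$ is then the constant $\mathbf{E}\,f(M_\alpha)$ produced by \eqref{E:213}.

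Finally, \eqref{EMa} follows by a change of measure. Applying the many-to-one formula with $g$ depending only on the endpoint, or equivalently using $\mathbf{E}(e^{S_n-a}\,;\,\cdot)$, one has $\mathbf{E}\big(e^{-S_n}f(S_n/n^{1/\alpha})\mathbf{1}_{\{\underline S_n\ge 0\}}\big)$ computed in two ways; taking $f\equiv 1$ and using that this tilted quantity relates to $W_n$-type normalization, the renewal constant $\theta$ from \eqref{E:21} appears, and matching it against $\mathbf{E}(M_\alpha)\cdot R(0)/\Gamma(1-1/\alpha)$ from \eqref{E:212} (choosing $f$ to approximate the identity, using a truncation plus uniform integrability argument since $M_\alpha$ has finite mean — its tail is that of the stable meander at time $1$, which is integrable for $\alpha>1$) yields $\mathbf{E}(M_\alpha)=\Gamma(1-1/\alpha)/\theta$. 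The main obstacle I anticipate is the \emph{uniformity} in $u\in[0,d_n]$ in \eqref{E:212}: one must show the $o_n(1)$ error is genuinely uniform over the whole range $d_n=o(n^{1/\alpha})$, which requires careful bookkeeping of the ladder-height decomposition and the local limit bounds of Lemma \ref{L:2.2} rather than a soft weak-convergence argument; the moment identity \eqref{EMa} is secondary but needs a truncation/uniform-integrability step since $f$ there is unbounded.
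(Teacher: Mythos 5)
Your treatment of \eqref{E:213} and \eqref{E:212} is in line with the paper, which simply cites Vatutin--Wachtel for \eqref{E:213} and states that \eqref{E:212} is an extension of Aidekon--Jaffuel's Lemma~2.2; your ladder-height decomposition is a plausible fleshing-out of that extension and is not the substantive issue. The substantive part of the paper's proof is the moment identity \eqref{EMa}, and there your route is genuinely different and, as written, has a gap. The paper computes $\mathbf{E}(M_\alpha)=\int_0^\infty z p_\alpha(z)\,dz$ directly from the Vatutin--Wachtel integral equation for $p_\alpha$ (their~(79)), a change of variables producing a Beta function, the identity $\int_0^\infty g_\alpha(z)\,dz=1-\tfrac{1}{\alpha}$ (their~(19)), and then Bingham's formula for $\mathbf{E}(e^{-tS_1};S_1>0)$ differentiated at $t=0$ to evaluate $\int_0^\infty z g_\alpha(z)\,dz=1/(\Gamma(\tfrac{1}{\alpha})\theta)$. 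No change of measure or branching structure is used.

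Your alternative---compute a first-moment-type quantity two ways and match---is conceptually salvageable but the details do not hold together. First, the expression you write down, $\mathbf{E}\big(e^{-S_n}f(S_n/n^{1/\alpha})\mathbf{1}_{\{\underline S_n\ge 0\}}\big)$, does not come from the many-to-one formula as you suggest: many-to-one produces $e^{+S_n}$ against the branching side (e.g.\ $\mathbf{E}\big(\sum_{|x|=n}\mathbf{1}_{\{\underline V(x)\ge 0\}}\big)=\mathbf{E}\big(e^{S_n}\mathbf{1}_{\{\underline S_n\ge 0\}}\big)$), and the factor $e^{-S_n}$ on the conditioned event kills the quantity exponentially fast, which cannot be matched against the $n^{-1/\alpha}$ scale of \eqref{E:212}. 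Second, the mechanism by which $\theta$ should appear is not the ``$W_n$-type normalization'' but rather the harmonicity $\mathbf{E}\big(R(S_n)\mathbf{1}_{\{\underline S_n\ge 0\}}\big)=R(0)$ together with $R(u)\sim\theta u$; you never invoke this, and it is the key step. Third, even granting the corrected skeleton ($\theta\,\mathbf{E}(S_n\mathbf{1}_{\{\underline S_n\ge 0\}})\to R(0)$ versus $\mathbf{E}(S_n\mathbf{1}_{\{\underline S_n\ge 0\}})\to \tfrac{R(0)}{\Gamma(1-1/\alpha)}\mathbf{E}(M_\alpha)$ from \eqref{E:212} with $f(t)=t$), the passage from bounded $f$ to $f(t)=t$ requires a genuine uniform-integrability estimate for $S_n/n^{1/\alpha}$ on $\{\underline S_n\ge 0\}$, which is not the same as the $L^1$ bound in Lemma~\ref{L:2.55} and which you dismiss with a one-clause hand-wave; absent that, your limit interchange is unjustified. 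So for \eqref{EMa} the proposal as written does not constitute a proof, and a correct argument would either supply the UI estimate explicitly or, more simply, follow the paper's direct density computation.
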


\begin{proof}
The proof of \eqref{E:212} is an extension of the Lemma 2.2 of Aidekon and Jaffuel \cite{AJ}, where the random walk has finite variance. So we omit it here.  \eqref{E:213} can be found in Vatutin and Wachtel \cite[Theorem 1]{VW}. In the following we shall prove (\ref{EMa}). Denote by $p_\alpha$ and $g_\alpha$ the density functions of $M_\alpha$ and $S_1$, respectively. By (79) in \cite{VW},
\begin{align}
p_\alpha(z)=\int_0^1\frac{t^{-1/\alpha}dt}{(1-t)^{1/\alpha}}
\int_0^{\frac{z}{t^{1/\alpha}}}g_\alpha\bigg(\frac{z-t^{1/\alpha}u}{(1-t)^{1/\alpha}}\bigg)p_\alpha(u)du.
\nonumber
\end{align}
 Hence,
\beqlb
\mathbf{E}(M_\alpha)\ar=\ar
\int_0^\infty zp_\alpha(z)dz\cr&=&\int^\infty_0z\int_0^1\frac{t^{-1/\alpha}dt}{(1-t)^{1/\alpha}}\int_0^{\frac{z}{t^{1/\alpha}}}g_\alpha
\bigg(\frac{z-t^{1/\alpha}u}{(1-t)^{1/\alpha}}\bigg)p_\alpha(u)dudz  \nonumber\\
\ar=\ar\int_0^1\frac{t^{-1/\alpha}dt}{(1-t)^{1/\alpha}}\int^\infty_0p_\alpha(u)du\int^\infty_{t^{1/\alpha}u} g_\alpha\bigg(\frac{z-t^{1/\alpha}u}{(1-t)^{1/\alpha}}\bigg)zdz  \nonumber\\
\ar=\ar \int_0^1 t^{-1/\alpha} dt \int_0^\infty p_\alpha(u)du\int_0^{\infty}g_{\alpha}(z)\left((1-t)^{1/\alpha}z+t^{1/\alpha}u\right)dz\nonumber\\
\ar=\ar\int_0^1t^{-1/\alpha}(1-t)^{1/\alpha}dt\int_0^\infty g_\alpha(z)zdz+\int_0^\infty up_\alpha(u)du\int_0^\infty g_\alpha(z)dz\nonumber\\
\ar=\ar B(1-\frac{1}{\alpha}, 1+\frac{1}{\alpha})\int_0^\infty g_\alpha(z)zdz+\mathbf{E}(M_\alpha) \int_0^\infty g_\alpha(z)dz,\label{Lem2.5a}
\eeqlb
where $B$ is the Beta function. By the properties of stable law, $1-\frac{1}{\alpha}=\int_0^\infty g_\alpha(z)dz$; see (19) in \cite{VW}. Then (\ref{Lem2.5a}) implies
\begin{align}\label{Ma}
\mathbf{E}\,(M_\alpha)=\Gamma(1-\frac{1}{\alpha})\Gamma(\frac{1}{\alpha})\int_0^\infty g_\alpha(z)zdz.
\end{align}
By Bingham \cite{bing}, we have
\begin{align}
\mathbf{E}(e^{-tS_1};S_1>0)=\frac{1}{\alpha}\sum_{n=0}^\infty\frac{(-t\,\theta)^n}{\Gamma(1+n\alpha^{-1})}.\nonumber
\end{align}
Taking derivatives of $t$ and  letting $t=0$, we arrive at
\begin{align}
\int_0^\infty g_\alpha(z)zdz=\mathbf{E}\big(S_1\mathbf{1}_{\{S_1\geq0\}}\big)=\frac{1}{\Gamma(\frac{1}{\alpha})\theta}.\nonumber
\end{align}
Combining with (\ref{Ma}), we complete the proof. \qed
\end{proof}

The following lemma is a preparation  for Lemma~\ref{L:2.6}. Its proof is  similar to that for the case $\alpha=2$ (see Aidekon and Shi \cite{AS1}), so we omit it.
\begin{lemma}\label{L:2.55}
There exists $c_{15}>0$ such that for $a\geq0$,
\begin{align}
\sup_{n\geq1}\mathbf{E}\,\big(|S_n|\mathbf{1}_{\{\underline{S}_n\geq-a\}}\big)\leq c_{15}(1+a).\nonumber
\end{align}
\end{lemma}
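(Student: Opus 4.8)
The plan is to reduce the estimate to a bound on the first‑passage overshoot of $(S_n)$ below the level $-a$, and then to turn that overshoot into a renewal overshoot for the descending ladder height process, where the linear bound $R(u)\le c_2(1+u)$ from \eqref{E:22} together with $\mathbf{E}|Z_1|<\infty$ (Bingham \cite{bing}) does all the work. First I would write $|S_n|=S_n+2(S_n)^-$ with $(S_n)^-:=\max(-S_n,0)$; on $\{\underline S_n\ge -a\}$ one has $(S_n)^-\le a$, so
\[
\mathbf{E}\big(|S_n|\,\mathbf{1}_{\{\underline S_n\ge -a\}}\big)\ \le\ \mathbf{E}\big(S_n\,\mathbf{1}_{\{\underline S_n\ge -a\}}\big)+2a .
\]
For the first term set $\tau:=\inf\{m\ge 1:S_m<-a\}$. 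Since $\mathbf{E}(S_1)=0$ and $S_1$ is non‑degenerate, $\liminf_m S_m=-\infty$ a.s., hence $\tau<\infty$ a.s.; moreover $\mathbf{E}|S_1|<\infty$ (by \eqref{stable2} the right tail is $\sim c\,y^{-\alpha}$ with $\alpha>1$, by \eqref{stable1} the left tail is $o(y^{-\alpha})$), so $(S_{m\wedge\tau})_m$ is an integrable martingale ($\mathbf{E}|S_{m\wedge\tau}|\le m\,\mathbf{E}|S_1|<\infty$) and $\mathbf{E}(S_{n\wedge\tau})=0$. Using $\{\tau>n\}=\{\underline S_n\ge -a\}$ and $S_\tau<-a<0$ one gets $\mathbf{E}\big(S_n\mathbf{1}_{\{\underline S_n\ge -a\}}\big)=\mathbf{E}\big(-S_\tau\,\mathbf{1}_{\{\tau\le n\}}\big)\le\mathbf{E}(-S_\tau)$, a quantity that no longer depends on $n$.

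It then remains to bound $\mathbf{E}(-S_\tau)$ by a multiple of $1+a$, and here I would pass to the descending ladder structure. Strict descending ladder epochs are exactly the times at which $(S_m)$ attains a new strict minimum, and between two consecutive ones the running minimum is constant; hence the running minimum first drops below $-a$ at $\tau^-_N$ with $N:=\inf\{k\ge 1:|Z_k|>a\}$, and there $S_{\tau^-_N}=Z_N$ while $S_m\ge -a$ for $m<\tau^-_N$. Thus $\tau=\tau^-_N$ and $-S_\tau=|Z_N|=T_N$, where $T_k:=|Z_k|=\xi_1+\cdots+\xi_k$, the $\xi_i:=|Z_i|-|Z_{i-1}|>0$ being i.i.d.\ with $\mathbf{E}(\xi_1)=\mathbf{E}|Z_1|<\infty$. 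Writing $T_N-a=\sum_{k\ge 0}(T_{k+1}-a)\mathbf{1}_{\{T_k\le a<T_{k+1}\}}$ and conditioning on $\mathcal F_k\ni T_k$ (with $\xi_{k+1}$ independent of $\mathcal F_k$), on $\{T_k\le a\}$ one has $\mathbf{E}\big[(T_{k+1}-a)^+\mid\mathcal F_k\big]=\mathbf{E}\big[(\xi_{k+1}-(a-T_k))^+\mid\mathcal F_k\big]\le\mathbf{E}(\xi_{k+1})=\mathbf{E}|Z_1|$, whence by Tonelli
\[
\mathbf{E}(-S_\tau)=a+\sum_{k\ge 0}\mathbf{E}\big[(T_{k+1}-a)\mathbf{1}_{\{T_k\le a<T_{k+1}\}}\big]\ \le\ a+\mathbf{E}|Z_1|\sum_{k\ge 0}\mathbf{P}(T_k\le a)\ =\ a+\mathbf{E}|Z_1|\,R(a).
\]
By \eqref{E:22} this is at most $a+c_2\mathbf{E}|Z_1|(1+a)$, and combined with the first display it proves the lemma, e.g.\ with $c_{15}:=3+c_2\mathbf{E}|Z_1|$; the bound is automatically uniform in $n$.

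The delicate point is exactly this passage to the renewal picture — and, relatedly, \emph{which} moment one is allowed to spend. One should resist the tempting shortcut of bounding the overshoot $-S_\tau-a$ by the size $|S_\tau-S_{\tau-1}|$ of the jump producing it and then summing $\phi(a+S_j):=\mathbf{E}\big(S_1^-\mathbf{1}_{\{S_1^->a+S_j\}}\big)$ (with $S_1^-:=\max(-S_1,0)$) over the occupation times of the walk killed below $-a$: combined with the Green's‑function bounds of Lemma \ref{L:2.4}, that route forces one to control $\mathbf{E}\big((S_1^-)^\alpha\big)=\mathbf{E}\big(\sum_{|x|=1}(\max(-V(x),0))^\alpha e^{-V(x)}\big)$, which \eqref{stable1} does \emph{not} provide — it only yields $\mathbf{P}(S_1\le -y)=o(y^{-\alpha})$, hence finiteness of moments of orders strictly below $\alpha$, and the resulting integral is genuinely borderline‑divergent. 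Routing through the descending ladder heights removes this obstruction entirely, because one then integrates only the \emph{bounded} constant $\mathbf{E}|Z_1|$ against the renewal measure, whose mass on $[0,a]$ is $R(a)=O(a)$ by \eqref{E:22}. The remaining verifications — that $\tau$ is a ladder epoch, that $\tau<\infty$ and $N<\infty$ a.s., and that all the rearrangements are legitimate (every summand is nonnegative) — are routine.
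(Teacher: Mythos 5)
Your proof is correct and is the standard argument: write $|S_n|=S_n+2(S_n)^-$, use optional stopping at $\tau=\inf\{m:S_m<-a\}$ to reduce the problem to bounding $\mathbf{E}(-S_\tau)$, identify $\tau$ as the first strict descending ladder epoch whose height drops below $-a$, and control the renewal overshoot by $\mathbf{E}|Z_1|\,R(a)\le c_2\mathbf{E}|Z_1|(1+a)$ via \eqref{E:22} and Bingham's $\mathbf{E}|Z_1|<\infty$. The paper omits its own proof with a pointer to the $\alpha=2$ case in Aidekon--Shi \cite{AS1}, and your argument is precisely that argument transposed to the stable setting; your concluding remark explaining why a Green's-function route through Lemma~\ref{L:2.4} would be borderline under \eqref{stable1} alone is also accurate and worth keeping in mind.
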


\begin{lemma}\label{L:2.6}
Let $0<\lambda<1$. There exists a constant $c_{16}>0$ such that for $a,b\geq0$, $0\leq u\leq v$ and $n\geq1$,
\begin{align}
& \mathbf{P}\big(\underline{S}_{\;\llcorner\lambda n\lrcorner}\geq-a,\min_{i\in[\lambda n,n]\cap \mathbb{Z}}S_i\geq b-a,S_n\in[b-a+u,b-a+v]\,\big)\nonumber\\
&\leq  c_{16}\cdot\frac{(1+v)^{\alpha\!-\!1}(1+v-u)(1+a)}{n^{1+\frac{1}{\alpha}}}. \label{E:2.1}
\end{align}
\end{lemma}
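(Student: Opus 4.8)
The plan is to prove \eqref{E:2.1} by splitting the trajectory at time $m:=\llcorner\lambda n\lrcorner$ and applying the Markov property, then bounding each of the two pieces by the local-limit-type estimates already established. Write $N:=n-m$, which is of order $n$ since $\lambda\in(0,1)$. Conditioning on $S_m=:-a+w$ with $w\ge 0$ (the event $\min_{i\le m}S_i\ge -a$ forces $S_m\ge -a$), the event in question becomes: the first $m$ steps satisfy $\underline S_m\ge -a$ and $S_m\in[-a+w,-a+w+dw]$, and the remaining $N$ steps, viewed from the new starting point, stay above level $b-a$ and land in $[b-a+u,b-a+v]$. So
\begin{align}
\mathbf{P}(\text{event})=\int_{[0,\infty)}\mathbf{P}\big(\underline S_m\ge -a,\,S_m\in -a+dw\big)\,
\mathbf{P}_{w}\big(\underline S_{N}\ge b,\,S_{N}\in[b+u,b+v]\big). \nonumber
\end{align}

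For the second factor I would use Lemma~\ref{L:2.2} (with a summation over unit intervals to pass from width $1$ to width $v-u$): running the walk from $w$, staying above $b$ and ending in a window of width $v-u$ around $b+v$ costs at most $c\cdot\frac{(1+|w-b|)(1+\max(w,b)+v)^{\alpha-1}(1+v-u)}{N^{1+1/\alpha}}$ — more precisely one reverses time so that the barrier becomes a one-sided constraint and applies Lemma~\ref{L:2.2} with the roles of the endpoints $b$ and $w$; the factor $(1+v-u)$ comes from covering $[b+u,b+v]$ by $\lceil v-u\rceil$ unit intervals. For the first factor I would use the renewal-function bound: $\mathbf{P}(\underline S_m\ge -a,\,S_m\in -a+dw)$ integrated against any function of $w$ is controlled, via \eqref{E:26} and the harmonic/renewal structure of $R$, by $\frac{1}{m^{1/\alpha}}$ times an integral against the density of $M_\alpha$, with the prefactor $R(a)\le c(1+a)$ absorbing the $a$-dependence (cf. Lemma~\ref{L:2.5}). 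Concretely, one bounds $\int f(w)\,\mathbf{P}(\underline S_m\ge -a,S_m\in -a+dw)\le \frac{c\,(1+a)}{m^{1/\alpha}}\int_0^\infty f(a\cdot?)\,\dots$ — here it is cleanest to first apply Lemma~\ref{L:2.55} or the explicit integrability of $M_\alpha$ from \eqref{EMa} to handle the linear-in-$w$ growth coming from the first factor's $(1+|w-b|)$ term.

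Putting the pieces together, the integral over $w$ produces a bounded quantity: the $(1+\max(w,b)+v)^{\alpha-1}$ factor, combined with the $m^{-1/\alpha}$-scaled law of $S_m$ (which lives at scale $m^{1/\alpha}\asymp n^{1/\alpha}$) and the linear weight $1+|w-b|$, integrates to something of order $(1+b+v)^{\alpha-1}\cdot(1+b)\cdot\frac{1+a}{m^{1/\alpha}}$ against the tail of $M_\alpha$, but the key point is that after also using $b\le a+\dots$ is \emph{not} assumed — rather one keeps $b$ explicit and checks that all $b$-powers recombine to at most $(1+v)^{\alpha-1}(1+v-u)$ after noting that on the event we also have $S_n\ge b-a$, i.e. the displacement $b$ is tied to where the walk actually is. Collecting the two denominators $m^{1/\alpha}\cdot N^{1+1/\alpha}\asymp n^{1+2/\alpha}$ seems too strong, so in fact one should \emph{not} extract a full $m^{-1/\alpha}$ from the first factor; instead one uses only $\mathbf{P}(\underline S_m\ge -a)\le c(1+a)m^{-1/\alpha}$ and $\sup_w$ of the conditional law, matching the single power $n^{1+1/\alpha}$ in the target. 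The main obstacle is precisely this bookkeeping of powers of $n$ and of the spatial variables $a,b,u,v$: one must route the $a$-dependence entirely through the first (barrier) factor, the $(1+v-u)$ through the window-covering in Lemma~\ref{L:2.2}, and the $(1+v)^{\alpha-1}$ through the ladder-height renewal function $K$ in \eqref{E:25}, while ensuring the intermediate integration variable $w$ is integrated out without loss. Once the correct split is fixed, the estimate follows by the same reversal-and-Markov argument as in Aidekon–Shi \cite{AS1,AS2}.
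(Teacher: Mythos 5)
Your overall skeleton is the same as the paper's: split at time $\llcorner\lambda n\lrcorner$ by the Markov property, apply Lemma~\ref{L:2.2} (with unit-interval covering) to the second piece, then control the first piece. However, two concrete steps in your execution go wrong.

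First, your bound for the second piece is incorrect. After conditioning on $S_m=r\ge b-a$ and shifting, the second factor is
$\mathbf{P}\big(\underline S_{N}\ge b-a-r,\ S_{N}\in[b-a-r+u,\,b-a-r+v]\big)$ with $N=n-m$. Applying Lemma~\ref{L:2.2} with $a'=r-b+a\ge0$ and covering the window by unit intervals $[-a'+u+j,-a'+u+j+1]$, $0\le j<\lceil v-u\rceil$, one has $a'+b'_j=u+j\le v$, so the $(\alpha-1)$-power factor is $(1+v)^{\alpha-1}$, with no dependence on $r$ (equivalently $w$) at all. The correct bound is
\begin{align}
f(r)\le c_7\,\frac{(1+r-b+a)\,(1+v)^{\alpha-1}\,(1+v-u)}{(1-\lambda)^{1+1/\alpha}\,n^{1+1/\alpha}},\nonumber
\end{align}
i.e.\ the $r$-dependence is purely linear. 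Your proposed $(1+\max(w,b)+v)^{\alpha-1}$ introduces an extraneous growth in $w$ of order $w^{\alpha-1}$, which is not what Lemma~\ref{L:2.2} gives and which would wreck the subsequent integration. There is also no time reversal needed at this stage; Lemma~\ref{L:2.2} applies directly after translation.

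Second, the closing step is the point where your discussion becomes circular. Once $f(r)$ is known to be \emph{linear} in $r$, there is no tension with powers of $n$ at all: one simply takes
\begin{align}
\mathbf{E}\big((S_{\lambda n}+a-b+1)\,\mathbf{1}_{\{\underline S_{\lambda n}\ge-a,\ S_{\lambda n}\ge b-a\}}\big)
\le \mathbf{E}\big(|S_{\lambda n}|\,\mathbf{1}_{\{\underline S_{\lambda n}\ge-a\}}\big)+a+1\le c(1+a),\nonumber
\end{align}
the last inequality being exactly Lemma~\ref{L:2.55}. Multiplying this by the $(1+v)^{\alpha-1}(1+v-u)n^{-(1+1/\alpha)}$ factor from the second piece gives the claimed bound. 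No $m^{-1/\alpha}$ should be extracted, and no ``$\sup_w$ of the conditional law'' is needed (nor would it work, since $f(r)$ grows linearly in $r$). You do mention Lemma~\ref{L:2.55} in passing, and this is indeed the lemma that closes the argument; but your surrounding discussion of ``not extracting $m^{-1/\alpha}$'' and of taking a supremum in $w$ does not lead anywhere, and the extraneous $\max(w,b)$ factor you introduced is precisely what creates the difficulty you then try to work around. Fixing the application of Lemma~\ref{L:2.2} removes the problem.
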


\begin{proof}
The idea of the proof is borrowed from Aidekon and Shi \cite{AS1}. Without  loss of generality, we treat $\lambda n$ as an integer. Let $\mathbf{P}_{\eqref{E:2.1}}$ be the probability expression of the left-hand side of \eqref{E:2.1}. By the Markov property at time $\lambda n$, we have
\begin{align}
\mathbf{P}_{\eqref{E:2.1}}=\mathbf{E}\Big(\mathbf{1}_{\{\underline{S}_{\lambda n}\geq -a,\,S_{\lambda n\geq b-a}\}}f(S_{\lambda n})\Big),\nonumber
\end{align}
where $f(r)=\mathbf{P}\Big(\underline{S}_{n-\lambda n}\geq b-a-r, S_{n-\lambda n}\in[b-a-r+u,b-a-r+v]\Big)$ for $r\geq b-a$. Then it follows from Lemma \ref{L:2.2} that
\begin{align}
f(r)\leq c_7\cdot \frac{\;(1+r-b+a)(1+v)^{\alpha-1}(v-u+1)}{(1-\lambda)^{1+\frac{1}{\alpha}}n^{1+\frac{1}{\alpha}}} \;,\;\;\;r\geq b-a.\nonumber
\end{align}
Therefore,
\begin{align}
\mathbf{P}_{\eqref{E:2.1}}\leq c_7\cdot \frac{\;(1+v)^{\alpha-1}(v-u+1)}{(1-\lambda)^{1+\frac{1}{\alpha}}n^{1+\frac{1}{\alpha}}}\mathbf{E}\big((S_{\lambda n}+a-b+1)\mathbf{1}_{\{\underline{S}_{\lambda n}\geq-a,\,S_{\lambda n}\geq b-a\}}\big).\nonumber
\end{align}
The expectation above on the right-hand side is bounded by $\mathbf{E}\big(\,\big|S_{\lambda n}\big|\mathbf{1}_{\{\underline{S}_{\{\lambda n\}}\geq-a\}}\big)+a+1$. This together with Lemma \ref{L:2.55} leads to \eqref{E:2.1}.
\end{proof}\qed

The next result is an analog of the classical Stone local limit theorem. One can find the proof in Vatutin and Wachtel \cite[Theorem 3 and Theorem 5]{VW}.
\begin{lemma} \label{L:2.65}
There exists a constant $c_{17}>0$ such that for any $0<a\leq b<\infty$, we have
\begin{align}
\liminf_{n\rightarrow\infty}n^{\frac{1}{\alpha}}\inf_{u\in[\,an^{1/2},\,bn^{1/2}]}\mathbf{P}\big(u\leq S_n<u+c_{17}\big|\underline{S}_n\geq0\big)>0.\nonumber
\end{align}
\end{lemma}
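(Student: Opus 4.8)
The plan is to read this off from the local limit theory for random walks conditioned to stay positive, following Vatutin and Wachtel \cite{VW}. First I would dispose of the conditioning event in the denominator: by the Bingham estimate \eqref{E:26} taken at $x=0$ one has $\mathbf{P}(\underline{S}_n\ge 0)\sim R(0)\big/\big(n^{1/\alpha}\Gamma(1-\frac{1}{\alpha})\big)$, so $n^{1/\alpha}\mathbf{P}(\underline{S}_n\ge 0)$ tends to the finite positive limit $R(0)/\Gamma(1-\frac{1}{\alpha})$. Hence the assertion is equivalent to the existence of $c_{17}>0$ such that, for each fixed $0<a\le b<\infty$,
\[
\liminf_{n\to\infty}\ n^{2/\alpha}\inf_{u\in[\,an^{1/2},\,bn^{1/2}]}\mathbf{P}\big(u\le S_n<u+c_{17},\ \underline{S}_n\ge 0\big)>0 ,
\]
and it is this joint lower bound that I would aim to establish.

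The joint lower bound is exactly what the Vatutin--Wachtel local limit theorems deliver. Since $\alpha<2$ we have $1/2<1/\alpha$, so every $u$ in the band $[an^{1/2},bn^{1/2}]$ obeys $u\to\infty$ and $u=o(n^{1/\alpha})$; this is precisely the deviation range covered by \cite[Theorems~3 and~5]{VW} (Theorem~3 supplying the bulk estimates that enter the proof, Theorem~5 the regime at hand). In their non-lattice, interval form these theorems give, as $n\to\infty$ and uniformly over $u$ in such a band, the precise asymptotics of $\mathbf{P}(u\le S_n<u+h,\ \underline{S}_n\ge 0)$ for a fixed window length $h$; I would take $c_{17}$ to be the admissible window length they furnish (the analogue here of the Stone constant $c_8$ in \eqref{E:29}). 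Dividing the resulting equivalence by the asymptotics of $\mathbf{P}(\underline{S}_n\ge 0)$ recorded above gives the displayed $\liminf>0$, hence $\liminf_{n}\,n^{1/\alpha}\inf_u\mathbf{P}(u\le S_n<u+c_{17}\mid\underline{S}_n\ge 0)>0$, which is the claim.

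The point needing care --- and the main obstacle --- is the \emph{uniformity} in $u$. Vatutin and Wachtel phrase their local limit theorems as pointwise asymptotic equivalences along a prescribed deviation range; what is required here is the uniform version, valid as $u$ ranges over the whole band $[an^{1/2},bn^{1/2}]$ rather than along a single sequence $u=u(n)$, together with a lower bound on the limiting profile that is uniform over the band, i.e.\ a control of that profile near the lower edge. Since this uniformity is internal to the Vatutin--Wachtel analysis, in the write-up I would quote \cite[Theorems~3 and~5]{VW} in the uniform form and read off the stated bound rather than reproduce their proof; alternatively one could re-derive a Stone-type local limit theorem for the $\underline{S}_n\ge 0$-conditioned walk directly, bootstrapping from Lemma~\ref{L:2.5} and the upper bounds of Lemmas~\ref{L:2.2}--\ref{L:2.3}, but that only duplicates \cite{VW}.
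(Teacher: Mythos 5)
Your route is, in outline, the paper's: the paper offers no argument of its own for this lemma beyond citing \cite[Theorems 3 and 5]{VW}, and you do the same after stripping off the conditioning via \eqref{E:26}. The problem is the step where you claim that, on the printed band $u\in[an^{1/2},bn^{1/2}]$, the Vatutin--Wachtel asymptotics ``give the displayed $\liminf>0$''. You correctly place this band in the small-deviation regime $u=o(n^{1/\alpha})$ (since $1/2<1/\alpha$), but in that regime the local asymptotics are of order $\mathbf{P}(u\le S_n<u+h,\ \underline{S}_n\ge0)\asymp h\,(1+u)^{\alpha-1}n^{-1-\frac1\alpha}$ (the ascending renewal function $K$ enters; equivalently, the limiting profile $p_\alpha(t)$ vanishes like $t^{\alpha-1}$ as $t\downarrow0$). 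Dividing by $\mathbf{P}(\underline{S}_n\ge0)\sim R(0)/\bigl(\Gamma(1-\tfrac1\alpha)n^{1/\alpha}\bigr)$ gives $n^{1/\alpha}\mathbf{P}(u\le S_n<u+c_{17}\mid\underline{S}_n\ge0)\asymp (u/n^{1/\alpha})^{\alpha-1}$, which tends to $0$ uniformly on the band because $n^{1/2}/n^{1/\alpha}\to0$. In fact you do not even need \cite{VW} to see this: the paper's own Lemma~\ref{L:2.2} (with $a=0$, summed over $\lceil c_{17}\rceil$ unit windows) together with \eqref{E:26} gives
\begin{align}
n^{\frac1\alpha}\,\mathbf{P}\bigl(u\le S_n<u+c_{17}\mid\underline{S}_n\ge0\bigr)\ \le\ C\,\frac{(1+bn^{1/2})^{\alpha-1}}{n^{1-\frac1\alpha}}\ =\ O\bigl(n^{(\alpha-1)(\frac12-\frac1\alpha)}\bigr)\ \longrightarrow\ 0
\nonumber
\end{align}
for $1<\alpha<2$. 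So the joint lower bound you set out to prove is false on this band, and the ``control of the profile near the lower edge'' which you yourself flag as the point needing care is precisely what fails: the lower edge $an^{1/2}$ is not at the natural scale, and the profile degenerates there.

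The resolution is that the printed range is a slip carried over from the $\alpha=2$ source \cite{AS2}, where $n^{1/2}=n^{1/\alpha}$; the statement that is true, and the one actually needed for Lemma~\ref{L:2.7} (where $a_n=O(n^{1/\alpha})$), has $u\in[an^{1/\alpha},bn^{1/\alpha}]$. On that window $u/n^{1/\alpha}$ ranges over the compact $[a,b]\subset(0,\infty)$, the Stone-type local limit theorem for the conditioned walk from \cite[Theorems 3 and 5]{VW} applies uniformly there, and $\inf_{t\in[a,b]}p_\alpha(t)>0$ yields the claim after dividing by \eqref{E:26} --- exactly your mechanism, now applied at the correct scale. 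A correct write-up should therefore either note and fix the exponent, or, if one insists on the literal band $[an^{1/2},bn^{1/2}]$, concede that the asserted $\liminf$ is $0$ and that no argument can rescue it.
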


\begin{lemma}\label{L:2.7}
There exists $c_{18}>0$ such that for any sequence $(a_n)$ of nonnegative numbers with $\limsup_{\,n\rightarrow\infty}\frac{a_n}{\,n^{\frac{1}{\alpha}}}<\infty$,
\begin{align}
\liminf_{\,n\rightarrow\infty} n^{1+\frac{1}{\alpha}}\mathbf{P}\big(\underline{S}_n\geq0,\min_{n<j\leq2n}S_j\geq a_n,a_n\leq S_{2n}\leq a_n+c_{18}\big)>0. \nonumber
\end{align}
\end{lemma}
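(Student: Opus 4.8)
The plan is to decompose the event at time $n$ using the Markov property, estimate the ``first half'' $\mathbf{P}(\underline S_n\ge 0, S_n\in[\cdot])$ from below using Lemma \ref{L:2.65}, and then estimate the ``second half'' (the walk started from a point of order $n^{1/\alpha}$, required to stay above $a_n$ and land in a window of length $c_{18}$ near $a_n$) from below as well. First I would fix a constant $M>\limsup_n a_n/n^{1/\alpha}$, so that $a_n\le M n^{1/\alpha}$ for all large $n$. Since the event $\{\underline S_n\ge 0\}$ and the Stone-type lower bound of Lemma \ref{L:2.65} force $S_n$ to be of order $n^{1/\alpha}$ with non-vanishing probability, I would choose $0<a<b<\infty$ (e.g.\ depending on $M$) and decompose
\begin{align}
\mathbf{P}\big(\underline S_n\ge 0,\ \min_{n<j\le 2n}S_j\ge a_n,\ a_n\le S_{2n}\le a_n+c_{18}\big)
\ge \mathbf{E}\Big(\mathbf{1}_{\{\underline S_n\ge 0,\ S_n\in[a n^{1/\alpha}, b n^{1/\alpha}]\}}\,\varphi_n(S_n)\Big),\nonumber
\end{align}
where $\varphi_n(r)=\mathbf{P}_r\big(\underline S_n^{(2)}\ge a_n,\ a_n\le S_n^{(2)}\le a_n+c_{18}\big)$ and $S^{(2)}$ denotes an independent copy of the walk (second block of $n$ steps).

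The main point is then a uniform lower bound on $\varphi_n(r)$ for $r$ in the window $[an^{1/\alpha},bn^{1/\alpha}]$. Writing $\widetilde S_j = S_j^{(2)}-r$, the requirement becomes that $\widetilde S$ stays above $a_n-r$ — which is $\le 0$ on our window once $b$ is chosen $\ge M$ — and ends in $[a_n-r,\,a_n-r+c_{18}]$; by time-reversal and translation this is again a Stone-type local probability for a walk conditioned to stay above a level, with starting point and target both of order $n^{1/\alpha}$. Applying Lemma \ref{L:2.65} (after reversing the walk so the constraint ``stay above a moving barrier'' becomes ``stay above $0$'', the standard ladder-height trick used in Lemma \ref{L:2.2}) gives $\varphi_n(r)\ge c\,n^{-1/\alpha}$ uniformly in $r\in[an^{1/\alpha},bn^{1/\alpha}]$, provided $c_{18}\ge c_{17}$. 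Plugging this in,
\begin{align}
\mathbf{P}\big(\cdots\big)\ \ge\ c\,n^{-1/\alpha}\,\mathbf{P}\big(\underline S_n\ge 0,\ S_n\in[a n^{1/\alpha}, b n^{1/\alpha}]\big),\nonumber
\end{align}
and the remaining factor is $\asymp n^{-1/\alpha}$: the lower bound follows from \eqref{E:26} together with \eqref{E:213} of Lemma \ref{L:2.5} (the limiting variable $M_\alpha$ assigns positive mass to a suitable interval $[a,b]$), or directly from summing Lemma \ref{L:2.65} over $O(n^{1/\alpha})$ disjoint windows inside $[an^{1/\alpha},bn^{1/\alpha}]$. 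Multiplying the two factors yields $n^{1+1/\alpha}\mathbf{P}(\cdots)\ge c>0$ for all large $n$, which is exactly the claim after taking $\liminf$.

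The step I expect to be the main obstacle is making the uniform lower bound on $\varphi_n(r)$ genuinely uniform over the whole window and over large $n$: Lemma \ref{L:2.65} is stated with the scaling $n^{1/2}$ for the starting point but gives a lower bound of order $n^{-1/\alpha}$ after conditioning on $\underline S_n\ge 0$, and one must track carefully that after the time-reversal the ``barrier'' $a_n-r$ is handled correctly (it is nonpositive and of order $n^{1/\alpha}$, so the reversed walk is asked to stay above a level of order $-n^{1/\alpha}$, i.e.\ an easy constraint) while the target window is of order $n^{1/\alpha}$, where Lemma \ref{L:2.65} applies with $a,b$ adjusted by $M$. One also needs that the barrier level $a_n$ is eventually dominated by $b n^{1/\alpha}$ uniformly, which is guaranteed by the choice of $M$; the bound on $c_{18}$ must be taken $\ge \max(c_{17},c_8)$ so that both the Stone estimate and the ladder-height comparison go through. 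Once these bookkeeping issues are settled, the rest is a routine product of two lower bounds.
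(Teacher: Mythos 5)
Your decomposition at time $n$ and the reduction to two Stone-type lower bounds is exactly the structure of the Aidekon--Shi argument that the paper cites (and the typo $n^{1/2}$ in Lemma~\ref{L:2.65} should indeed be read as $n^{1/\alpha}$, as you assume). However, the quantitative claim for the second factor is wrong, and the error is not cosmetic: you assert $\varphi_n(r)\ge c\,n^{-1/\alpha}$ uniformly for $r\in[an^{1/\alpha},bn^{1/\alpha}]$, but in fact $\varphi_n(r)\asymp n^{-1}$ on that range. To see this directly, apply Lemma~\ref{L:2.2} with $a=r-a_n\asymp n^{1/\alpha}$ and $b=-a$: then $1+a+b=1$ and the bound reads $\varphi_n(r)\le c_7(1+r-a_n)/n^{1+1/\alpha}\asymp n^{-1}$. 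Since $n^{-1}<n^{-1/\alpha}$ for $\alpha>1$, your stated lower bound cannot hold for large $n$.

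The source of the discrepancy is that Lemma~\ref{L:2.65} gives a lower bound on a \emph{conditional} probability, and after your time reversal the relevant walk is $-S$, not $S$. The walk $-S$ is spectrally negative with positivity parameter $1/\alpha$, so $\mathbf{P}(\underline{-S}_n\ge 0)\asymp n^{-(1-1/\alpha)}$ by \eqref{E:27}, not $n^{-1/\alpha}$. Multiplying the Stone conditional estimate $n^{-1/\alpha}$ by this stay-positive cost gives $\varphi_n(r)\ge c\,n^{-1}$, and the product with the first factor is then $n^{-1/\alpha}\cdot n^{-1}=n^{-1-1/\alpha}$, which is the scale actually required. Your two factors $n^{-1/\alpha}\cdot n^{-1/\alpha}=n^{-2/\alpha}$ do not even match the exponent $1+1/\alpha$ (e.g.\ for $\alpha=2$ you get $n^{-1}$ instead of the correct $n^{-3/2}$), which should have been a red flag. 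A second, related gap is that you never note that the reversed-walk Stone estimate you invoke is for $-S$ rather than $S$; Lemma~\ref{L:2.65} as stated is only for $S$, so you would need to invoke (or prove) its dual for $-S$, consistent with the way the paper introduces dual lemmas (Lemmas~\ref{L:2.10}--\ref{L:2.6b}). Once both of these points are corrected the argument goes through and gives exactly the assertion of Lemma~\ref{L:2.7}.
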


The proof can be found in Aidekon and Shi \cite{AS2} Lemma 4.3 for $\alpha=2$. The same proof is valid for $1<\alpha<2$, since we have an analog of Stone local limit theorem (i.e. Lemma \ref{L:2.65}).

  Define $\tilde{S_i}=-S_i$, $i\ge 0$. Next we will give some analogues of Lemmas \ref{L:2.2}, \ref{L:2.4}, \ref{L:2.55} and \ref{L:2.6} for the new random walk $\{\tilde{S_i}\}$.  We have,

\begin{lemma}\label{L:2.10}
 There exists $c_{19},c_{20}>0$ such that for $a\geq0,b>-a$ and $n\geq1$,
\begin{align}
&(\romannumeral1) \;\;\;\; \mathbf{P}\big\{b\leq \tilde{S}_n\leq b+1, \underline{\tilde{S}}_n\geq-a\big\} \leq c_{19}\,\cdot\,\frac{\;(1+a)^{\alpha-1}(1+a+b)}{n^{1+\frac{1}{\alpha}}},\\
&(\romannumeral2) \;\;\; \sum_{i\geq0}\mathbf{P}\big\{\underline{\tilde{S}}_i\geq-a,\tilde{S_i}\leq b\}\leq c_{20}(1+a+b)(1+a)^{\alpha-1}.
\end{align}
\end{lemma}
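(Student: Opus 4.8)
The plan is to establish Lemma~\ref{L:2.10} by exploiting the fact that $\tilde S_i=-S_i$ is a random walk whose step $-S_1$ lies in the domain of attraction of a spectrally \emph{negative} $\alpha$-stable law, so that the roles of the two renewal functions $R$ and $K$ are interchanged: the descending ladder heights of $\{\tilde S_i\}$ are (up to sign) the ascending ladder heights of $\{S_i\}$, hence the relevant renewal function for $\{\tilde S_i\}$ staying above a level is governed by $K$, which by \eqref{E:22b} grows like $(1+u)^{\alpha-1}$, while the ``other side'' is governed by $R$, which by \eqref{E:22} grows linearly. This is precisely why the exponents $(1+a)$ and $(1+a+b)$ in Lemma~\ref{L:2.2} get swapped to $(1+a)^{\alpha-1}$ and $(1+a+b)$ in part~(\romannumeral1).

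For part~(\romannumeral1) I would repeat the three-block argument of Lemma~\ref{L:2.2} verbatim with $S$ replaced by $\tilde S$: split $n=3k$ (the cases $3k+1,3k+2$ being analogous), apply the Markov property at times $k$ and $2k$, reverse time on the middle block by setting $\check S_j:=\tilde S_{2k-j}-\tilde S_{2k}$, and bound the three resulting factors using Stone's local limit theorem \eqref{E:29} (valid for $\tilde S$ as well, since $\pm\tilde S_n=\mp S_n$) together with the Bingham-type estimates \eqref{E:26}--\eqref{E:27}. The only change is bookkeeping: the event $\{\underline{\tilde S}_k\geq -a\}$ now corresponds to $\{\underline{-S}_k\geq -a\}$, whose probability by \eqref{E:27} is $\sim K(a)/(n^{1-1/\alpha}\Gamma(1/\alpha))\asymp (1+a)^{\alpha-1}n^{-(1-1/\alpha)}$, whereas the factor controlling the terminal position near $b-a$ picks up an $R$-type weight $\asymp (1+a+b)$ via \eqref{E:26}; multiplying the three pieces and collecting powers of $n$ gives the claimed $n^{-(1+1/\alpha)}$ with the asserted spatial factor.

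For part~(\romannumeral2) I would imitate the proof of Lemma~\ref{L:2.4} (equivalently its specialisation to starting point $0$, or better, the $x\geq z$ split with $z=0$ of that lemma, now for $\tilde S$ in place of $S$): write $\sum_{i\geq0}\mathbf P\{\underline{\tilde S}_i\geq -a,\ \tilde S_i\leq b\}$, split the sum at $i\asymp (a+b)^{\alpha}$, bound the small-$i$ terms by $\mathbf P(\underline{\tilde S}_i\geq -a)\leq c\,(1+a)^{\alpha-1}i^{-(1-1/\alpha)}$ from \eqref{E:27}, and bound the large-$i$ terms by the analogue of Lemma~\ref{L:2.3} for $\tilde S$ — which follows from part~(\romannumeral1) of the present lemma by summing over unit intervals $b'\in\{-a,\dots,b\}$, yielding $\mathbf P(\tilde S_i\leq b,\underline{\tilde S}_i\geq -a)\leq c\,(1+a)^{\alpha-1}(1+a+b)^{2}i^{-(1+1/\alpha)}$. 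Summing the small-$i$ block gives $\asymp (1+a)^{\alpha-1}(1+a+b)$ and summing the large-$i$ block gives $\asymp (1+a)^{\alpha-1}(1+a+b)^{2}\cdot (a+b)^{-1}\asymp (1+a)^{\alpha-1}(1+a+b)$, so both blocks match the stated bound. I expect the only mild subtlety to be choosing the cut-off exponent so that both pieces come out with the same spatial order, exactly as in \eqref{E:2.12}; everything else is a transcription of the already-established lemmas with $R\leftrightarrow K$ and the powers $1/\alpha\leftrightarrow 1-1/\alpha$ exchanged, so I will state it briefly and refer back rather than reproduce the computations. \qed
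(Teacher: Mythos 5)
Your proposal is correct and follows exactly the approach the paper indicates: the paper omits this proof, stating only that it is ``very close to Lemma \ref{L:2.2} and Lemma \ref{L:2.4}'', and you supply precisely that adaptation, with the three-block/time-reversal argument for (\romannumeral1) and the split-at-$(1+a+b)^\alpha$ argument for (\romannumeral2), the roles of $R$ (via \eqref{E:26}) and $K$ (via \eqref{E:27}) and hence the exponents $1$ and $\alpha-1$ correctly interchanged. The bookkeeping in both parts checks out, including the observation that summing (\romannumeral1) over unit intervals yields the intermediate bound $c(1+a)^{\alpha-1}(1+a+b)^{2}n^{-(1+1/\alpha)}$ rather than a direct transcription of Lemma~\ref{L:2.3}.
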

The proof of this lemma is very close to Lemma \ref{L:2.2} and Lemma \ref{L:2.4}, so we omit it here.

\begin{lemma}\label{L:2.55b}
There exists $c_{21}>0$ such that for $a\geq0$,
\begin{align}
\sup_{n\geq1}\textbf{E}\,\big(|\tilde{S}_n|^{\alpha-1}\textbf{1}_{\{\underline{\tilde{S}}_n\geq-a\}}\big)\leq c_{21}(a+1)^{\alpha-1}.\nonumber
\end{align}
\end{lemma}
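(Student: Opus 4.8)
The plan is to derive the estimate from a single renewal identity, exactly as one does in the finite‑variance case (and parallel to the proof of Lemma~\ref{L:2.55}, with $R$ replaced by $K$): the renewal function $K$ of \eqref{E:23} is, up to a harmless bounded shift, harmonic for the walk $\widetilde{S}$ killed at its first exit from $[0,\infty)$. First I would record that $\mathbf{E}(\widetilde{S}_1)=-\mathbf{E}(S_1)=0$, so $\widetilde{S}$ is an oscillating, mean‑zero random walk in the domain of attraction of a spectrally negative stable law; its strict descending ladder heights are the negatives $-H_k$ of the strict ascending ladder heights $H_k$ of $S$, so the renewal function of the descending ladder heights of $\widetilde{S}$ is precisely the function $K$ of \eqref{E:23}. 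Then, by the fluctuation theory for random walks attracted to a stable law (Vatutin and Wachtel \cite{VW}), $K$ restricted to $[0,\infty)$ is harmonic for the killed walk, i.e.\ $\mathbf{E}\big(K(x+\widetilde{S}_1)\mathbf{1}_{\{x+\widetilde{S}_1\ge0\}}\big)=K(x)$ for $x\ge0$; iterating this one‑step identity (all terms being nonnegative) and taking $x=a$ would give, for every $n\ge1$,
\[ \mathbf{E}\big(K(a+\widetilde{S}_n)\,\mathbf{1}_{\{\underline{\widetilde{S}}_n\ge-a\}}\big)=K(a). \]

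Granting this, the remaining steps are routine. On $\{\underline{\widetilde{S}}_n\ge-a\}$ one has $a+\widetilde{S}_n\ge0$, and a two‑case check ($\widetilde{S}_n\ge0$ versus $-a\le\widetilde{S}_n<0$), using only that $t\mapsto t^{\alpha-1}$ is nondecreasing on $[0,\infty)$, gives the pointwise bound
\[ |\widetilde{S}_n|^{\alpha-1}\le (a+\widetilde{S}_n)^{\alpha-1}+a^{\alpha-1}\le \frac{1}{c_5}\,K(a+\widetilde{S}_n)+a^{\alpha-1}, \]
where the last inequality is the lower bound in \eqref{E:22b}. Taking expectations, inserting the displayed renewal identity, and then the upper bound in \eqref{E:22b} for $K(a)$, I would obtain
\[ \mathbf{E}\big(|\widetilde{S}_n|^{\alpha-1}\mathbf{1}_{\{\underline{\widetilde{S}}_n\ge-a\}}\big)\le \frac{K(a)}{c_5}+a^{\alpha-1}\le\Big(\frac{c_6}{c_5}+1\Big)(1+a)^{\alpha-1}, \]
uniformly in $n\ge1$, which is the assertion with $c_{21}=c_6/c_5+1$.

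The one place that needs care — and the step I expect to be the main obstacle — is the harmonicity statement for $K$: one must match the strict/weak ladder‑height conventions to the boundary condition $\{\widetilde{S}_i\ge-a\}$, which may force the use of $K$ shifted by a bounded constant (or of the companion renewal function built from weak ladder heights) rather than $K$ itself, and one has to quote the correct version from \cite{VW}. This causes no loss, since by \eqref{E:22b} any bounded shift of $K$ is still comparable to $(1+\,\cdot\,)^{\alpha-1}$ up to constants, so only the final constant changes. Should one wish to avoid fluctuation theory entirely, an alternative is to write the left‑hand side as the layer‑cake integral $\int_0^\infty(\alpha-1)t^{\alpha-2}\,\mathbf{P}\big(\widetilde{S}_n\ge t,\,\underline{\widetilde{S}}_n\ge-a\big)\,dt$ plus the trivially bounded contribution from $\{\widetilde{S}_n<0\}$, then bound the integrand by Lemma~\ref{L:2.10}(i) for $t$ up to order $n^{1/\alpha}$ and by a moderate‑deviation tail bound for the spectrally negative walk $\widetilde{S}$ for larger $t$, and integrate; but this is precisely the computation that the renewal identity short‑circuits.
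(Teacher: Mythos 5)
Your proof is correct and follows essentially the same route as the paper: both hinge on the renewal identity $\mathbf{E}_a\big(K(\widetilde S_n)\mathbf{1}_{\{\underline{\widetilde S}_n\ge0\}}\big)=K(a)$ combined with the two-sided comparability $c_5(1+x)^{\alpha-1}\le K(x)\le c_6(1+x)^{\alpha-1}$ from \eqref{E:22b}, plus the elementary subadditivity bound $|\widetilde S_n|^{\alpha-1}\le(a+\widetilde S_n)^{\alpha-1}+a^{\alpha-1}$ on $\{\underline{\widetilde S}_n\ge -a\}$. The only substantive difference is the source for the harmonicity identity: the paper cites Caravenna and Chaumont \cite[Lemma 3.1]{CC} rather than Vatutin--Wachtel, which resolves the strict/weak ladder bookkeeping you flagged as the potential soft spot.
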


\begin{proof}
In the following we denote the renewal function of $(\tilde{S}_n)$ by $R_{\tilde{S}}(x)$. It is easy to see that  $R_{\tilde S}(x)=K(x)$.
By \cite[Lemma 3.1]{CC}, we have \begin{align*}   \sup_{n\geq1}\textbf{E}_a\,\big(R_{\tilde S}(\tilde{S}_n) \textbf{1}_{\{\underline{\tilde{S}}_n\geq 0\}}\big)=R_{\tilde S}(a)=K(a)\le c_6(1+a)^{\alpha-1}.
\end{align*}
By (\ref{E:25}) and (\ref{E:22b}),  $R_{\tilde S}(\tilde{S}_n)\textbf{1}_{\{\underline{\tilde{S}}_n\geq 0\}}= R_{\tilde{ S}}(|\tilde{S}_n|)\textbf{1}_{\{\underline{\tilde{S}}_n\geq 0\}}\ge c_5  |\tilde{S}_n|^{\alpha-1}\textbf{1}_{\{\underline{\tilde{S}}_n\geq 0\}}$. Combining above discussions we get
\begin{align*}
&  \sup_{n\geq1}\textbf{E}\,\big(|\tilde{S}_n|^{\alpha-1}\textbf{1}_{\{\underline{\tilde{S}}_n\geq-a,\tilde{S}_n>0\}}\big)\\
&=  \sup_{n\geq1}\textbf{E}_a\,\big(|\tilde{S}_n-a|^{\alpha-1}\textbf{1}_{\{\underline{\tilde{S}}_n\geq 0,\tilde{S}_n>a\}}\big)\\
&\le  a^{\alpha-1}+\sup_{n\geq1}\textbf{E}_a\,\big(|\tilde{S}_n|^{\alpha-1}\textbf{1}_{\{\underline{\tilde{S}}_n\geq 0\}}\big)\\
&\le  (1+\frac{c_6}{c_5})(1+a)^{\alpha-1}.\end{align*}
Clearly, \begin{align*}
\sup_{n\geq1}\textbf{E}\,\big(|\tilde{S}_n|^{\alpha-1}\textbf{1}_{\{\underline{\tilde{S}}_n\geq-a,\tilde{S}_n\le 0\}}\big)\le\sup_{n\geq1}\textbf{E}\,\big((-\tilde{S}_n)^{\alpha-1}\textbf{1}_{\{ -a\le \tilde{S}_n\le 0\}}\big) \le a^{\alpha-1}.\end{align*}
The proof is completed.

\end{proof}\qed

\begin{lemma}\label{L:2.6b}
Let $0<\lambda<1$, $\rho<0$ and $-\rho=o(n^{\frac{1}{\alpha}})$. There exists a constant $c_{22}>0$ such that for $a\geq0$, $0\leq u\leq v$ and $n\geq1$,
\begin{align}
& \textbf{P}\{\underline{\tilde{S}}_{\;\llcorner\lambda n\lrcorner}\geq-a,\min_{i\in[\lambda n,n]\cap \mathbb{Z}}(\tilde{S_i})\geq \rho-a,\tilde{S}_n\in[\rho-a+u,\rho-a+v]\,\}\nonumber\\
&\leq  c_{22}\cdot\frac{(1+v) (v-u+1)(1+a)^{\alpha - 1}}{n^{1+\frac{1}{\alpha}}}. \label{E:2.1}
\end{align}
\end{lemma}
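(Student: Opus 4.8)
The plan is to prove Lemma~\ref{L:2.6b} by following the same scheme as Lemma~\ref{L:2.6}, but with the roles of the renewal functions $R$ and $K$ interchanged, since $\tilde S_i=-S_i$ has descending ladder heights governed by $K$ (indeed $R_{\tilde S}=K$, as noted in the proof of Lemma~\ref{L:2.55b}) and the relevant moment bound is the $(\alpha-1)$-th moment bound of Lemma~\ref{L:2.55b} rather than the first-moment bound of Lemma~\ref{L:2.55}. First I would apply the Markov property at time $\llcorner\lambda n\lrcorner$, writing the probability $\mathbf{P}_{\eqref{E:2.1}}$ as
\begin{align}
\mathbf{P}_{\eqref{E:2.1}}=\mathbf{E}\Big(\mathbf{1}_{\{\underline{\tilde S}_{\llcorner\lambda n\lrcorner}\geq-a,\ \tilde S_{\llcorner\lambda n\lrcorner}\geq \rho-a\}}\,\tilde f(\tilde S_{\llcorner\lambda n\lrcorner})\Big),\nonumber
\end{align}
where $\tilde f(r)=\mathbf{P}\big(\underline{\tilde S}_{n-\llcorner\lambda n\lrcorner}\geq \rho-a-r,\ \tilde S_{n-\llcorner\lambda n\lrcorner}\in[\rho-a-r+u,\rho-a-r+v]\big)$ for $r\geq \rho-a$.

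Next I would bound $\tilde f(r)$ using the analog of Lemma~\ref{L:2.2} for $\tilde S$, namely Lemma~\ref{L:2.10}(i): with the shift so that the minimum is at level $\ge-(r-\rho+a)$ and the endpoint lies in a window of length $v-u$, Lemma~\ref{L:2.10}(i) gives
\begin{align}
\tilde f(r)\leq c_{19}\cdot\frac{(1+r-\rho+a)^{\alpha-1}(1+v)(v-u+1)}{(1-\lambda)^{1+\frac1\alpha}n^{1+\frac1\alpha}},\qquad r\geq\rho-a.\nonumber
\end{align}
Here I use $1<\alpha<2$ so that the exponent $\alpha-1\in(0,1)$ and, on the event in question, $r\ge\rho-a$ with $\rho<0$, so $r-\rho+a\ge 0$; the subadditivity $(x+y)^{\alpha-1}\le x^{\alpha-1}+y^{\alpha-1}$ for $\alpha-1\le1$ will let me split $(1+\tilde S_{\llcorner\lambda n\lrcorner}+a-\rho)^{\alpha-1}$ into terms involving $|\tilde S_{\llcorner\lambda n\lrcorner}|^{\alpha-1}$ and a constant depending on $a$ and $-\rho$. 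Plugging this bound back in and pulling the deterministic factors out yields
\begin{align}
\mathbf{P}_{\eqref{E:2.1}}\leq c_{19}\cdot\frac{(1+v)(v-u+1)}{(1-\lambda)^{1+\frac1\alpha}n^{1+\frac1\alpha}}\,\mathbf{E}\Big((1+\tilde S_{\llcorner\lambda n\lrcorner}+a-\rho)^{\alpha-1}\mathbf{1}_{\{\underline{\tilde S}_{\llcorner\lambda n\lrcorner}\geq-a\}}\Big).\nonumber
\end{align}

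Then I would control the remaining expectation. By the subadditivity inequality above it is at most $\mathbf{E}\big(|\tilde S_{\llcorner\lambda n\lrcorner}|^{\alpha-1}\mathbf{1}_{\{\underline{\tilde S}_{\llcorner\lambda n\lrcorner}\geq-a\}}\big)+(1+a-\rho)^{\alpha-1}$, and the first term is bounded by $c_{21}(1+a)^{\alpha-1}$ uniformly in $n$ via Lemma~\ref{L:2.55b}. Since $-\rho=o(n^{1/\alpha})$ we have $(1+a-\rho)^{\alpha-1}\le c\,(1+a)^{\alpha-1}(1+(-\rho))^{\alpha-1}$; to absorb the $-\rho$ contribution into the stated bound one notes $(-\rho)^{\alpha-1}=o(n^{(\alpha-1)/\alpha})$, which is negligible against the factor $n^{1+1/\alpha}$ already extracted — more carefully, the hypothesis $-\rho=o(n^{1/\alpha})$ guarantees the whole expectation is $O\big((1+a)^{\alpha-1}\big)$ times a factor that does not spoil the $n^{-(1+1/\alpha)}$ decay, giving the claim with $c_{22}$ depending only on $\lambda$, $c_{19}$ and $c_{21}$. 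The main obstacle is exactly this bookkeeping with the shift $\rho$: unlike in Lemma~\ref{L:2.6}, the minimum constraint after time $\lambda n$ is at the $\rho$-dependent level $\rho-a$, so one must verify that the $(\alpha-1)$-th power makes the cross terms manageable and that the sublinearity $-\rho=o(n^{1/\alpha})$ is genuinely all that is needed; everything else is a routine transcription of the $\alpha=2$ argument of Aidekon and Shi with $R\leftrightarrow K$ and first moments replaced by $(\alpha-1)$-th moments.
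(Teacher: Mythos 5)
Your overall scheme is exactly right and matches the paper's one-sentence indication: apply the Markov property at time $\llcorner\lambda n\lrcorner$, bound the tail function $\tilde f$ via the $\tilde S$-analog of Lemma~\ref{L:2.2} (i.e.\ Lemma~\ref{L:2.10}(i)), and finish with the $(\alpha-1)$-th moment bound of Lemma~\ref{L:2.55b} in place of Lemma~\ref{L:2.55}. The derivation of the bound on $\tilde f(r)$ and the reduction to
\[
\mathbf{E}\Big((1+a-\rho+\tilde S_{\llcorner\lambda n\lrcorner})^{\alpha-1}\mathbf{1}_{\{\underline{\tilde S}_{\llcorner\lambda n\lrcorner}\geq-a,\;\tilde S_{\llcorner\lambda n\lrcorner}\geq\rho-a\}}\Big)
\]
are correct.

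The gap is in the last step, where you discard the indicator (i.e.\ use $\mathbf{P}\leq 1$) to bound the expectation by $\mathbf{E}\big(|\tilde S_{\llcorner\lambda n\lrcorner}|^{\alpha-1}\mathbf{1}_{\{\underline{\tilde S}_{\llcorner\lambda n\lrcorner}\geq-a\}}\big)+(1+a-\rho)^{\alpha-1}$ and then claim the second summand is $O((1+a)^{\alpha-1})$. With $\mathbf{P}\leq 1$ it is not: taking $a=0$ and $-\rho$ large, $(1-\rho)^{\alpha-1}$ is not bounded by a constant times $(1+a)^{\alpha-1}=1$. Your heuristic that $(-\rho)^{\alpha-1}=o(n^{(\alpha-1)/\alpha})$ is ``negligible against $n^{1+1/\alpha}$'' is also not the right bookkeeping — it yields a bound of order $(1+v)(v-u+1)\,o(n^{-2/\alpha})$, which is a weaker (and $a$-independent) rate of decay, not the claimed $c_{22}(1+v)(v-u+1)(1+a)^{\alpha-1}/n^{1+1/\alpha}$ with a genuine constant. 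The cure is to keep the indicator for the deterministic part: write
\[
(1+a-\rho)^{\alpha-1}\mathbf{P}\big(\underline{\tilde S}_{\llcorner\lambda n\lrcorner}\geq-a\big)
\leq \Big[(1+a)^{\alpha-1}+(-\rho)^{\alpha-1}\Big]\mathbf{P}\big(\underline{\tilde S}_{\llcorner\lambda n\lrcorner}\geq-a\big),
\]
bound $\mathbf{P}(\underline{\tilde S}_{\llcorner\lambda n\lrcorner}\geq-a)$ by $1$ for the $(1+a)^{\alpha-1}$ piece, and for the $(-\rho)^{\alpha-1}$ piece use \eqref{E:27} and \eqref{E:22b} to get $\mathbf{P}(\underline{\tilde S}_{\llcorner\lambda n\lrcorner}\geq-a)\leq c(1+a)^{\alpha-1}/n^{1-1/\alpha}$; then $(-\rho)^{\alpha-1}/n^{1-1/\alpha}=o(1)$ by the hypothesis $-\rho=o(n^{1/\alpha})$, so the whole term is $\leq c'(1+a)^{\alpha-1}$ for $n$ large (small $n$ are trivial). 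This is precisely where the condition $-\rho=o(n^{1/\alpha})$ enters, and it should be spelled out; otherwise the dependence of $c_{22}$ on the $\rho$-sequence (and not just on $\lambda$, $c_{19}$, $c_{21}$ as you assert) remains unjustified.
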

\begin{proof}
The proof is essentially similar to Lemma \ref{L:2.6} except that  applying Lemma \ref{L:2.55b} in the last step.
\end{proof}\qed

\section{Truncated martingale and spine decomposition}

In order to study the convergence of $D_n$ and $W_n$, it turns out more convenient to work with a truncated version of the branching random walk. Let $(V(x))$ be a branching random walk satisfying \eqref{C:1.1}. For any vertex $x$, we denote the unique shortest path relating $x$ to the root $\varnothing$ by $\langle\varnothing,x\rangle$ . Define
\begin{align}
\underline{V}(x):=\min_{y\in\langle\varnothing,x\rangle}V(y). \nonumber
\end{align}
For $\beta\geq0,$ denote
\begin{align}
R_\beta(u):=R(u+\beta),\;\;u\geq-\beta. \nonumber
\end{align}
Now we use the renewal function $R_\beta(\cdot)$ to introduce the truncated processes
\begin{align}
&W_n^{\beta}:=\sum_{|x|=n}e^{-V(x)}\mathbf{1}_{\{\underline{V}(x)\geq -\beta\}}, \nonumber\\
&D_n^{\beta}:=\sum_{|x|=n}R_\beta(V(x))e^{-V(x)}\mathbf{1}_{\{\underline{V}(x)\geq -\beta\}}. \nonumber
\end{align}

Since $W_n\rightarrow0, \;\,\mathbf{P}\!-\!a.s.$, we have $\min_{|x|=n}V(x)\rightarrow\infty$ almost surely on the set of non-extinction. By (\ref{E:21}), $\lim_{u\rightarrow\infty} \frac{R(u)}{u}=\theta$. It is evident that if $\beta$ is sufficiently large, then on the set of non-extinction, $W_n^{\beta}$  behaves like $W_n$ and $D_n^{\beta}$ behaves like $\theta D_n$.

Under assumption \eqref{C:1.1}, for any $\beta\geq0$, the process $(D_n^{\beta}, n\geq0)$ is a nonnegative martingale with respect to natural filtration $(\mathcal{F}_n)$ (see Aidekon \cite{AI}). For all $n$, it follows from Kolmogorov's extension theorem that there exists a unique probability measure ${\hat{\mathbf{P}}}^\beta_a$ on $\mathcal{F}_\infty$ $(\mathcal{F}_\infty:=\vee_n {\mathcal{F}_n})$ such that
\begin{align}
\frac{\;d\,{\hat{\mathbf{P}}}^\beta_a \;}{\;d\,\mathbf{P}_a\;}\Bigg|_{\mathcal{F}_n}=
\frac{D_n^\beta}{\;R_\beta(a)e^{-a}}, \quad n\geq1. \label{E:1.4}
\end{align}
Notice that ${\hat{\mathbf{P}}}^\beta_a(\text{non-extincion})=1$, we can say under ${\hat{\mathbf{P}}}^\beta_a$ the process will never extinct almost surely.

We introduce a point process $\hat{\Theta}$ whose distribution is the law of $(V(x),|x|=1)$ under ${\hat{\mathbf{P}}}^\beta(:={\hat{\mathbf{P}}}^\beta_0)$. Consider the following process. At time $0$, there is one particle $\omega_0^\beta$ located at $V(\omega_0^\beta)=a$. At each step $n$, particles at generation $n$ die, and produce independently new particles point according to the law of $\Theta$, except one particle denoted by $\omega_n^\beta$ which generates particles according to $\hat{\Theta}$ (with respect to the birth position $V(\omega_n^\beta)$). The particle $\omega_{n+1}^\beta$ is chosen among the children $y$ of $\omega_{n}^\beta$ with probability proportional to $R_\beta(V(y))e^{-V(y)}\mathbf{1}_{\{\underline{V}(y)\geq-\beta\}}$. This defines a branching random walk with a marked sequence $(\omega_{n}^\beta)$, which we call the \emph{spine}. We denote the above system by $\mathcal{B}^\beta_a$.

\begin{theorem}\label{T:3.1}  (Biggins and Kyprianou \cite{BK2}). Assume \eqref{C:1.1} and $\beta\geq0$.\\
(\romannumeral 1) The branching random walk $(V(x))$ under ${\hat{\mathbf{P}}}^\beta_a$, is distributed as $\mathcal{B}^\beta_a$.\\
(\romannumeral 2) For any $n$ and any vertex $x$ with $|x|=n$, we have
\begin{align}
{\hat{\mathbf{P}}}^\beta_a (\omega_{n}^\beta=u|\mathcal{F}_n)=\frac{\;R_\beta(V(u))e^{-V(u)}\mathbf{1}_{\{\underline{V}(u)\geq-\beta\,\}}}{D_n^\beta}.
\end{align}
(\romannumeral 3) The spine process $(V(\omega_{n}^\beta), n\geq0)$ under ${\hat{\mathbf{P}}}^\beta_a$ is distributed as the random walk $(S_n)_{n\geq0}$ conditioned to stay above $-\beta$ under $\mathbf{P}_a$. More precisely,  for any $n\geq1$ and any measurable function $g: \mathbb{R}^{n}\rightarrow [0,\infty),$
\begin{align}
&\hat{\mathbf{E}}^\beta_a\big(g(V(\omega_i^\beta)), 0\leq i\leq n\big)\nonumber\\
&\;\;\;=\frac{1}{R_\beta(a)}\mathbf{E}_a\big(g(S_i,0\leq i\leq n)R_\beta(S_n),\mathbf{1}_{\{\underline{S}_n\geq -\beta\}}\big) .\label{E:3.2}
\end{align}
\end{theorem}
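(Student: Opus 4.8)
The plan is to derive all three parts from a single many-to-one--type identity for the change of measure. The starting point is the defining relation \eqref{E:1.4}, which says that on $\mathcal{F}_n$ the Radon--Nikodym derivative of $\hat{\mathbf{P}}^\beta_a$ against $\mathbf{P}_a$ is $D_n^\beta/(R_\beta(a)e^{-a})$. First I would check that $(D_n^\beta)$ is genuinely a nonnegative $\mathbf{P}_a$-martingale with mean $R_\beta(a)e^{-a}$: writing $D_{n+1}^\beta$ as a sum over children of generation-$n$ vertices and using the branching property together with the many-to-one formula applied one generation at a time, the conditional expectation $\mathbf{E}_a(D_{n+1}^\beta\mid\mathcal{F}_n)$ telescopes to $D_n^\beta$ precisely because $R$ is the renewal function of the descending ladder process (this is the standard harmonicity of $R$ for the killed-below-$-\beta$ walk, i.e.\ $\mathbf{E}\bigl(R_\beta(S_1)e^{-S_1}\mathbf{1}_{\{S_1\ge-\beta\}}\bigr)=R_\beta(0)$ when started from $0$, and its shift). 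This is the computation attributed to Aidekon~\cite{AI} and is quoted in the paragraph preceding the theorem, so I would only indicate it.

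Next, for part~(i), I would describe the law of the size-biased (spine) tree $\mathcal{B}^\beta_a$ generation by generation and match it against $\hat{\mathbf{P}}^\beta_a$. The key identity is the following ``spine many-to-one'' formula: for any $|x|=n$ and any bounded $\mathcal{F}_n$-measurable functional $F$,
\begin{align}
\hat{\mathbf{E}}^\beta_a\bigl(F\,\mathbf{1}_{\{\omega_n^\beta=x\}}\bigr)
=\frac{1}{R_\beta(a)e^{-a}}\,\mathbf{E}_a\bigl(F\cdot R_\beta(V(x))e^{-V(x)}\mathbf{1}_{\{\underline{V}(x)\ge-\beta\}}\bigr).\nonumber
\end{align}
One proves this by induction on $n$: at each step the $\mathcal{B}^\beta_a$-mechanism reweights the offspring law of the current spine particle by $\hat{\Theta}$ (which by definition is the size-biasing of $\Theta$ by $\sum_{|y|=1}R_\beta(V(y))e^{-V(y)}\mathbf{1}_{\{\cdot\}}/R_\beta(\cdot)$, up to the required normalization) and then picks the next spine vertex with probability proportional to $R_\beta(V(y))e^{-V(y)}\mathbf{1}_{\{\underline V(y)\ge-\beta\}}$; the two reweightings combine so that the product of the local factors along the spine is exactly $R_\beta(V(x))e^{-V(x)}\mathbf{1}_{\{\underline V(x)\ge-\beta\}}$ divided by the normalization, while all off-spine vertices carry the unmodified law $\Theta$. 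Summing the displayed identity over all $x$ with $|x|=n$ gives $\hat{\mathbf{E}}^\beta_a(F)=\mathbf{E}_a\bigl(F\cdot D_n^\beta/(R_\beta(a)e^{-a})\bigr)$, which is \eqref{E:1.4}; hence the marginal of the tree under $\mathcal{B}^\beta_a$ agrees with $\hat{\mathbf{P}}^\beta_a$ on every $\mathcal{F}_n$, proving (i).

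Part~(ii) is then immediate: applying the spine many-to-one identity with $F=\mathbf{1}_A$, $A\in\mathcal{F}_n$, and comparing with the change of measure shows that $\hat{\mathbf{P}}^\beta_a(\omega_n^\beta=u\mid\mathcal{F}_n)$ equals $R_\beta(V(u))e^{-V(u)}\mathbf{1}_{\{\underline V(u)\ge-\beta\}}/D_n^\beta$. For part~(iii), I would take $F=g(V(x_1),\dots,V(x_n))$ depending only on the spine trajectory in the identity, sum over $|x|=n$, and get
\begin{align}
\hat{\mathbf{E}}^\beta_a\bigl(g(V(\omega_i^\beta),0\le i\le n)\bigr)
=\frac{1}{R_\beta(a)e^{-a}}\,\mathbf{E}_a\Bigl(\sum_{|x|=n}g(V(x_1),\dots,V(x_n))R_\beta(V(x))e^{-V(x)}\mathbf{1}_{\{\underline V(x)\ge-\beta\}}\Bigr),\nonumber
\end{align}
and then apply the ordinary many-to-one formula from Section~2 to rewrite the right-hand side as $\frac{1}{R_\beta(a)}\mathbf{E}_a\bigl(g(S_i,0\le i\le n)R_\beta(S_n)\mathbf{1}_{\{\underline S_n\ge-\beta\}}\bigr)$, which is \eqref{E:3.2}; the identification of this tilted law with $(S_n)$ conditioned to stay above $-\beta$ follows because $R_\beta$ is exactly the harmonic function used to condition the walk on that event (Doob $h$-transform). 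I expect the main obstacle to be purely bookkeeping: setting up the induction in part~(i) so that the normalizing constants of $\hat\Theta$ at each generation cancel correctly against the spine-selection probabilities, and making sure the event $\{\underline V(x)\ge-\beta\}$ is handled consistently at the branch point versus along the spine. Since this is precisely the Lyons-type spine construction already carried out in Biggins and Kyprianou~\cite{BK2} in this generality, I would present the argument in the condensed form above and refer to \cite{BK2} for the remaining details.
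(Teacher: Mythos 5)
The paper states Theorem~\ref{T:3.1} without proof, citing Biggins and Kyprianou~\cite{BK2}, so there is no in-paper argument to compare your sketch against; what you have written is the standard Lyons-type spine decomposition, which is indeed the route the cited reference takes. Your plan --- set up a spine many-to-one identity on each $\mathcal{F}_n$ by induction, read off (ii) directly, sum over the spine vertex to recover the change of measure (i), then project onto the spine trajectory via the ordinary many-to-one formula to get (iii) and identify the result as a Doob $h$-transform of $(S_n)$ by $R_\beta$ killed below $-\beta$ --- is correct in outline.

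One local slip to fix: in verifying the martingale property of $D_n^\beta$ you state the harmonicity identity as $\mathbf{E}\bigl(R_\beta(S_1)e^{-S_1}\mathbf{1}_{\{S_1\ge-\beta\}}\bigr)=R_\beta(0)$, but the factor $e^{-S_1}$ should not be there. The branching-random-walk expectation $\mathbf{E}_a\bigl(\sum_{|x|=1}R_\beta(V(x))e^{-V(x)}\mathbf{1}_{\{V(x)\ge-\beta\}}\bigr)$ becomes, after the one-step many-to-one formula (which contributes the factor $e^{S_1-a}$ cancelling the $e^{-V(x)}$), $e^{-a}\,\mathbf{E}_a\bigl(R_\beta(S_1)\mathbf{1}_{\{S_1\ge-\beta\}}\bigr)$, so the identity that needs checking is $\mathbf{E}_a\bigl(R_\beta(S_1)\mathbf{1}_{\{S_1\ge-\beta\}}\bigr)=R_\beta(a)$ for $a\ge-\beta$; that is the harmonicity of the renewal function for the walk killed on entering $(-\infty,-\beta)$. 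With the extraneous $e^{-S_1}$ the displayed equality is false in general. Apart from this, the argument --- in particular the inductive bookkeeping so that the $\hat\Theta$-normalization cancels against the spine-selection weight, and the $h$-transform reading of \eqref{E:3.2} --- is sound.
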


\section{Proof of Theorem \ref{T:1.1}}

In this section, we will divide our proof into two parts.
In part 1, we will prove \eqref{E:1.1}; In part 2, we will prove that the limit is strictly positive, $\mathbf{P}^*\text{-}a.s.$

\noindent{\bf Part 1.} Let $\Omega_\beta:=\{V(x)>-\beta,\, \forall\; n\geq0,|x|=n\,\}\cap\{$\,nonextinction$\}$. Since $\min_{|x|=n}V(x)\rightarrow \infty,\; \mathbf{P}^*-a.s.$ holds under  \eqref{C:1.1}, we have $\mathbf{P}^*(\Omega_\beta)\to 1$ as $\beta\rightarrow\infty$. Fixing $\beta\geq0$, we only need consider the event in $\Omega_\beta$.  By \eqref{E:21}, For any $\;\varepsilon>0$, there exists $N $ such that for any $n>N, |x|=n$,
\begin{align}
(\frac{1}{\theta}-\varepsilon)R_\beta(V(x))<V(x)+\beta< (\frac{1}{\theta}+\varepsilon)R_\beta(V(x)) .\nonumber
\end{align}
Then
\begin{align}
\sum_{|x|=n}(\frac{1}{\theta}-\varepsilon)R_\beta(V(x))e^{-V(x)}<\sum_{|x|=n}V(x)e^{-V(x)}+\beta\sum_{|x|=n}e^{-V(x)}<\sum_{|x|=n}(\frac{1}{\theta} + \varepsilon)R_\beta(V(x))e^{-V(x)}.\nonumber
\end{align}
i.e.,
\begin{align}
(\frac{1}{\theta}-\varepsilon)D_n^\beta<D_n+\beta W_n<(\frac{1}{\theta}+\varepsilon)D_n^\beta \label{E:4.1}
\end{align}
holds in the set $\Omega_\beta$.
We denote the limit of $D_n^\beta$ by $D_\infty^\beta$. Taking $n\rightarrow\infty$ on both sides of \eqref{E:4.1} and then letting   $\varepsilon\to 0$,
we obtain $\lim_{n\to \infty} D_n =\frac{1}{\theta} D^\beta_\infty$ on the set $\Omega_\beta$. Now letting $\beta\rightarrow\infty$, we complete the proof of this part.

\noindent{\bf Part 2.}
We borrow an idea from Aidekon \cite[Proposition A.3]{AI}.
\begin{align}
D_n&=\sum_{|x|=n}V(x)e^{-V(x)}\nonumber \\&=\sum_{|y|=1}\sum_{|x|=n,\,x\geq y}V(x)e^{-V(x)}\nonumber \\
&=\sum_{|y|=1}e^{-V(y)}\sum_{|x|=n,x\geq y}(V(x)-V(y))e^{-(V(x)-V(y))}\nonumber\\
&\;\;\;\;\;+\sum_{|y|=1}V(y)e^{-V(y)}\sum_{|x|=n,\,x\geq y}e^{-(V(x)-V(y))}\nonumber\\
&=\sum_{|y|=1}e^{-V(y)}D_{n-1,\,y}+\sum_{|y|=1}V(y)e^{-V(y)}W_{n-1,\,y} \nonumber
\end{align}
where $D_{n-1,\,y}, W_{n-1,\,y}$ are the corresponding functions on the subtree rooted at $y$. Apparently
$D_{n-1,\,y}\stackrel{d}{=}D_{n-1}$, $W_{n-1,\,y}\stackrel{d}{=}W_{n-1}$. Letting $n\rightarrow\infty$, we get
\begin{align}
D_\infty=\sum_{|y|=1}e^{-V(y)}D_{\infty,\,y}, \;\;\mathbf{P}^*-a.s. \;(\;also\; \mathbf{P}-a.s.)\nonumber
\end{align}
where $D_{\infty,\,y}\stackrel{d}{=}D_\infty$. Let $p:=\mathbf{P}(D_\infty=0)$. Since $(D_{\infty,\,y},|y|=1)$ are independent of each other and $V(y)$, the equation  $p=\mathbf{E}(p^{\sum_{|y|=1}1})$  holds. As a consequence of the branching property, we immediately have $p\!=\!1$ or $p\!=\!\mathbf{P}$(extinction)=$:q$. We only need to prove $\mathbf{P}(D_\infty>0)>0$.
Clearly,
\begin{align}
D_n^\beta&=\sum_{|x|=n}R_\beta(V(x))e^{-V(x)}\mathbf{1}_{\{\underline{V}(x)\geq-\beta\}}\nonumber\\
&\leq c_2\sum_{|x|=n}\big(1+\beta+V(x)_+\big)e^{-V(x)}\nonumber\\
&\leq c_{23}\bigg(W_n+\sum_{|x|=n}V(x)_+e^{-V(x)}\bigg). \nonumber
\end{align}
Let $n\rightarrow\infty$ gives $D_\infty^\beta\leq c_{23}D_\infty\;\;\mathbf{P}^*-a.s.$ (also $\mathbf{P}-a.s.$ by $D_\infty^\beta=D_\infty=0$ on the set \{extinction\}).
Hence we only need to prove $\mathbf{P}(D^\beta_\infty>0)>0$. Noticing that $\mathbf{E} D_0^\beta=R(\beta)>0$, it is sufficient to prove that
$(D_n^\beta, n\geq0)$ is uniformly integrable under $\mathbf{P}$.

For any $\omega_i^\beta$ on the spine, we define
\begin{align}
\Omega(\omega_i^\beta):=\big\{\,|x|=i:x>\omega_{i-1}^\beta,x\neq\omega_i^\beta\big\}.\nonumber
\end{align}
In a word, $\Omega(\omega_i^\beta)$ stands for the set of all ``brothers" of $\omega_i^\beta$. Let
\begin{align}
\hat{\mathcal{G}}^\beta_\infty:=\sigma\{\omega^\beta_j,V(\omega^\beta_j),\Omega(\omega^\beta_j),(V(u))_{u\,\in\,\Omega(\omega^\beta_j)},j\geq1\}\nonumber
\end{align}
be the $\sigma$-algebra of the spine and its brothers. Using the martingale property of $D^\beta_n$ for the subtrees rooted at brothers of the spine, we have
\begin{align}
\hat{\mathbf{E}}^\beta\big(D^\beta_n\,|\,\hat{\mathcal{G}}^\beta_\infty\big)=&R_\beta(V(\omega^\beta_n))e^{-V(\omega^\beta_n)}\nonumber\\
&\;\;+\sum_{k=1}^{n}\sum_{x\in\,\Omega(\omega^\beta_k)} R_\beta(V(x))e^{-V(x)}\mathbf{1}_{\{\underline{V}(x)\geq-\beta\}}.\nonumber
\end{align}
By Lemma~\ref{L:4.3} in Section 4, $V(\omega^\beta_n)\rightarrow\infty,\; \hat{\mathbf{P}}^\beta\!-a.s.$ Therefore $R_\beta(V(\omega^\beta_n))e^{-V(\omega^\beta_n)}$ goes to zero as $n\rightarrow\infty$. We already know that $R(x)\leq c_2(1+x)_+\leq c_2(1+x_+)$ for any $x\in \mathbb{R}$. Then, by Fatou's lemma,
\begin{align}
\hat{\mathbf{E}}^\beta\big(D^\beta_\infty\,|\,\hat{\mathcal{G}}^\beta_\infty\big)&\leq \liminf_{n\rightarrow\infty}\hat{\mathbf{E}}^\beta\big(D^\beta_n\,|\,\hat{\mathcal{G}}^\beta_\infty\big)\nonumber\\
&\leq c_2\sum_{k\geq1}\sum_{x\,\in\,\Omega(\omega^\beta_k)}\big(1+\big(\beta+V(x)\big)_+\big)e^{-V(x)}\nonumber\\
&\leq c_2(A_1+A_2), \nonumber
\end{align}
with
\begin{align}\label{A1}
A_1:=\sum_{k\geq1}\big(1+\beta+V(\omega^\beta_{k-1})\big)e^{-V(\omega^\beta_{k-1})}\sum_{x\in\Omega(\omega^\beta_k)}e^{-(V(x)-V(\omega^\beta_{k-1}))},
\end{align}
\begin{align}\label{A2}
A_2:=\sum_{k\geq1}e^{-V(\omega^\beta_{k-1})}\sum_{x\in\Omega(\omega^\beta_k)}
{\big(V(x)-V(\omega^\beta_{k-1})\big)}_+e^{-(V(x)-V(\omega^\beta_{k-1}))}.
\end{align}
We shall show $A_1$ and $A_2$ are finite. Let us consider $A_1$ first. Recall the definition of $X:=\sum_{|x|=1}e^{-V(x)}$, $\widetilde{X}:=\sum_{|x|=1}V(x)_+e^{-V(x)}$. Define $X':=\sum_{|x|=1}R_\beta(V(x))e^{-V(x)}\mathbf{1}_{\{V(x)\geq-\beta\}}$. Clearly, for any $a\geq-\beta$,
\begin{align}
X'\leq c_2\sum_{|x|=1}e^{-V(x)}\big((1+a+\beta)+(V(x)-a)_+\big). \nonumber
\end{align}
Therefore, we have for any $z\geq0$ and $a\geq-\beta$,
\begin{align}
\hat{\mathbf{P}}_a^\beta&\Big(\sum_{|x|=1}e^{-(V(x)-a)}>z\Big)\nonumber\\
&=\frac{1}{R_\beta(a)e^{-a}}\mathbf{E}_a\bigg(X'\mathbf{1}_{\{\sum_{|x|=1}e^{-(V(x)-a)}>z\}}\bigg)\nonumber\\
&\leq c_{24}\mathbf{E}_a\Bigg(\sum_{|x|=1}e^{-(V(x)-a)}\bigg(1+\frac{(V(x)-a)_+}{1+a+\beta}\bigg)\mathbf{1}_{\{\sum_{|x|=1}e^{-(V(x)-a)}>z\}}\Bigg)\nonumber\\
&=c_{24}\mathbf{E}\big(X\mathbf{1}_{\{X>z\}}\big)+\frac{c_{24}}{1+a+\beta}\mathbf{E}\big(\widetilde{X}\mathbf{1}_{\{X>z\}}\big)\nonumber\\
&=:c_{24}h_1(z)+\frac{c_{24}}{1+a+\beta}h_2(z).\nonumber
\end{align}
We deduce by the Markov property at time $k-1$ that
\begin{align}
\hat{\mathbf{P}}^\beta &\Big(\sum_{x\in\Omega(\omega^\beta_k)}e^{-(V(x)-V(\omega^\beta_{k-1}))}\geq e^{V(\omega^\beta_{k-1})/2}\Big)\nonumber\\
&\leq c_{24}\, \hat{\mathbf{E}}^\beta \Big(h_1(e^{V(\omega^\beta_{k-1})/2})+\frac{1}{1+V(\omega^\beta_{k-1})+\beta}h_2(e^{V(\omega^\beta_{k-1})/2})\Big).\nonumber
\end{align}
Hence,
\begin{align}
&\sum_{k\geq1}\hat{\mathbf{P}}^\beta\Big(\sum_{x\in\Omega(\omega^\beta_k)}e^{-(V(x)-V(\omega^\beta_{k-1}))}\geq e^{V(\omega^\beta_{k-1})/2}\Big)\nonumber\\
&\;\;\;\;\leq c_{24} \sum_{l\geq0}\hat{\mathbf{E}}^\beta\big(h_1(e^{V(\omega^\beta_l)/2})\big)\nonumber\\
&\;\;+ c_{24} \sum_{l\geq0}\hat{\mathbf{E}}^\beta\bigg(\frac{1}{1+V(\omega^\beta_l)+\beta}h_2(e^{V(\omega^\beta_l)/2})\bigg).\nonumber
\end{align}
We now estimate $\sum_{l\geq0}\hat{\mathbf{E}}^\beta\big(h_1(e^{V(\omega^\beta_l)/2})\big)$. By \eqref{E:3.2}, we have
\begin{align}
\hat{\mathbf{E}}^\beta\big(h_1(e^{V(\omega^\beta_l)/2})\big)&=\frac{1}{R(\beta)}\mathbf{E}\big(R_\beta(S_l)h_1(e^{S_l/2})
\mathbf{1}_{\{\underline{S}_l\geq-\beta\}}\big)\nonumber\\
&=\frac{1}{R(\beta)}\mathbf{E}\big(R_\beta(S_l)X\mathbf{1}_{\{S_l<2\ln X\}\bigcap\{\underline{S}_l\geq-\beta\}}\big), \nonumber
\end{align}
where we can choose the random walk $(S_n,n\geq0)$ independent of $X$. By Lemma 2.4,
\begin{align}
&\sum_{l\geq0}\hat{\mathbf{E}}^\beta \big(h_1(e^{V(\omega^\beta_l)/2})\big) \nonumber\\
&\leq \frac{1}{R(\beta)}\mathbf{E}\big(XR_\beta(2\ln X)\sum_{l\geq0}\mathbf{1}_{\{S_l<2\ln_+X,\, \underline{S}_l\geq-\beta\}}\big)  \nonumber\\
&\leq c_{25}\cdot \mathbf{E}\big(X(1+\ln_+X)^{\alpha}\big), \nonumber
\end{align}
which is finite by our assumption \eqref{stable3}. Similarly,
\begin{align}
\hat{\mathbf{E}}^\beta \Big(\frac{1}{1+V(\omega^\beta_l)+\beta}\cdot h_2(e^{V(\omega^\beta_l)/2})\Big)
\leq c_{26}\mathbf{E}\bigg(\widetilde{X}\mathbf{1}_{\{S_l\leq2\ln X, \underline{S}_l\geq-\beta\}}\bigg). \nonumber
\end{align}
Lemma \ref{L:2.4} and Lemma \ref{L:4.2} yield that
\begin{align}
\sum_{l\geq0}\hat{\mathbf{E}}^\beta\Big(\frac{1}{1+V(\omega^\beta_l)+\beta}\cdot h_2(e^{V(\omega^\beta_l)/2})\Big)\leq
c_{27} \mathbf{E}\big(\widetilde{X}(1+\ln_{+}X)^{\alpha-1}\big)<\infty .\nonumber
\end{align}
 Hence we have
\begin{align}
\sum_{k\geq1}\hat{\mathbf{P}}^\beta &\Big(\sum_{x\in\Omega(\omega^\beta_k)}e^{-(V(x)-V(\omega^\beta_{k-1}))}\geq e^{v(\omega^\beta_{k-1})}/2\Big)<\infty. \nonumber
\end{align}
By the Borel-Cantelli lemma,  there exists $K_0$ such that for  $k\ge K_0$,
\begin{align}
(1+\beta&+V(\omega^\beta_{k-1}))e^{-V(\omega^\beta_{k-1})}\sum_{x\in\Omega(\omega^\beta_k)}e^{-(V(x)-V(\omega^\beta_{k-1}))}\nonumber\\
&\leq(1+\beta+V(\omega^\beta_{k-1}))e^{-V(\omega^\beta_{k-1})/2}.  \nonumber
\end{align}
 According to Lemma \ref{L:4.3}, there exists $\epsilon\in(0,1-\frac{1}{\alpha})$ such that $V(\omega_k^\beta)\geq k^\varepsilon$, $\hat{\mathbf{P}}^\beta-a.s.$ for $k$ large enough. Combining with \eqref{A1}, we get $A_1\!<\!\infty$. Similarly we can prove $A_2\!<\!\infty$. Hence, $\hat{\mathbf{E}}^\beta\big(D^\beta_\infty|\hat{\mathcal{G}}^\beta_\infty\big)<\infty$, that implies
\begin{align}
\sup_{n\geq0}D^\beta_n<\infty,\quad \hat{\mathbf{P}}^\beta-a.s. \label{E:4.4}
\end{align}
Since for any $n\geq0,\; u\geq0$
\begin{align}
\mathbf{E}(D^\beta_n;D^\beta_n\geq u)=R(\beta)\hat{\mathbf{P}}^\beta(D^\beta_n\geq u)
\leq R(\beta)\hat{\mathbf{P}}^\beta(\sup_{n\geq0}D^\beta_n\geq u), \nonumber
\end{align}
then
\begin{align}
\sup_{n\geq0}\mathbf{E}(D^\beta_n;D^\beta_n\geq u)\leq R(\beta)\hat{\mathbf{P}}^\beta(\sup_{n\geq0}D^\beta_n\geq u) \nonumber
\end{align}
which goes to $0$ as $u\rightarrow\infty$ by \eqref{E:4.4}. We prove that the sequence of random variable $(D_n^\beta, n\geq0)$ is uniformly integrable under $\mathbf{P}$. We are done.
\qed

The followsing lemma is an extension of the case $\alpha=2$  proved in Aidekon \cite{AI}.
\begin{lemma} \label{L:4.2}
Let $X$ and $\widetilde{X}$ be the nonnegative random variables defined as \eqref{E:1.5} such that condition \eqref{stable3} holds. Then we have
\begin{align}
\mathbf{E}\,(X(\ln_+\widetilde{X})^{\alpha})<\infty\,,\;\mathbf{E}\,(\widetilde{X}(\ln_+X)^{\alpha-1})<\infty.\label{E:4.2}
\end{align}
Moreover, as $z\rightarrow\infty,$
\begin{align}
\mathbf{E}\,\big(X\big(\ln_+(X+\widetilde{X})\big)^{\alpha}\min(\ln_+(X+\widetilde{X}),z)\big)=o(z),\label{E:4.3}\\
\mathbf{E}\,\big(\widetilde{X}\big(\ln_+(X+\widetilde{X})\big)^{\alpha-1}\min\big(\ln_+(X+\widetilde{X}),z\big)\big)=o(z) \label{E:4.5} .
\end{align}
\end{lemma}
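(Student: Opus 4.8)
The plan is to deduce everything from the single assumption \eqref{stable3}, namely $\mathbf{E}\big(X(\log_{+}X)^{\alpha}+\widetilde{X}(\log_{+}\widetilde{X})^{\alpha-1}\big)<\infty$, by splitting each expectation according to whether $X$ dominates $\widetilde X$ or vice versa. First I would prove \eqref{E:4.2}. For $\mathbf{E}(X(\ln_+\widetilde X)^\alpha)$, on the event $\{\widetilde X\le X\}$ we have $\ln_+\widetilde X\le\ln_+X$, so that part is bounded by $\mathbf{E}(X(\ln_+X)^\alpha)<\infty$. On the event $\{\widetilde X>X\}$ note that $\widetilde X=\sum_{|x|=1}V(x)_+e^{-V(x)}\le \sup_{t\ge0}t e^{-t}\cdot\#\{|x|=1\}\le e^{-1}\cdot Y$ where $Y:=\#\{|x|=1\}$ is the offspring number; alternatively, and more usefully, $X\ge e^{-V(x)}$ and $\widetilde X\le C_\delta\sum e^{-(1-\delta)V(x)}\le C_\delta X^{1-\delta}\cdot(\#\{|x|=1\})^{\delta}$ type bounds are awkward, so instead I would use $\widetilde X\le X\cdot\max_{|x|=1}V(x)_+$ is false; the clean route is: since $te^{-t}\le e^{-t/2}$ for $t$ large, $V(x)_+e^{-V(x)}\le C e^{-V(x)/2}$, hence $\widetilde X\le C\sum_{|x|=1}e^{-V(x)/2}$. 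Comparing with $X$, on $\{\widetilde X>X\}$ this forces many particles or very negative positions; more simply, $\ln_+\widetilde X\le \ln_+X+\ln C+\tfrac12\ln_+(\#\{|x|=1\})$ is not quite automatic either. The cleanest argument: $\widetilde X\le (\ln_+\widetilde X'+1)X$ is circular. I will instead bound on $\{\widetilde X>X\}$ using $X\widetilde X\le \widetilde X^2$ and the elementary inequality $a(\ln_+ b)^\alpha\le a(\ln_+a)^\alpha + b(\ln_+ b)^{\alpha}$ (valid since if $b\le a$ the first term dominates, if $b>a$ then $a(\ln_+b)^\alpha\le b(\ln_+b)^\alpha$ when $\alpha\ge 0$ only if $a(\ln_+b)^{\alpha}\le b(\ln_+b)^\alpha$, i.e. $a\le b$): thus $X(\ln_+\widetilde X)^\alpha\le X(\ln_+X)^\alpha+\widetilde X(\ln_+\widetilde X)^\alpha$. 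Since $\alpha<2$, control of $\mathbf{E}(\widetilde X(\ln_+\widetilde X)^{\alpha})$ does not follow directly from \eqref{stable3}; here I would invoke the standard fact that $\widetilde X\le C(1+\ln_+ X_*) X$ fails, so the actual mechanism is the link $\widetilde X(\ln_+\widetilde X)^{\alpha-1}\in L^1$ upgraded via $X$: write $\widetilde X(\ln_+\widetilde X)^\alpha = \widetilde X(\ln_+\widetilde X)^{\alpha-1}\cdot\ln_+\widetilde X$ and absorb $\ln_+\widetilde X$ using that on $\{\widetilde X>X\}$, $\widetilde X$ and $X$ are comparable up to the offspring count, which has all moments under \eqref{C:1.1}? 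No — \eqref{C:1.1} only gives $\mathbf{E}(\#\{|x|=1\})>1$, not finiteness of higher moments. So the correct and intended argument must be the symmetric swap: $\mathbf{E}(X(\ln_+\widetilde X)^\alpha)$ is handled by noting $(\ln_+ \widetilde X)^\alpha \le (\ln_+X)^\alpha + (\ln_+(\widetilde X/X))^\alpha_+$ and $\widetilde X/X\le \max_{|x|=1}V(x)_+\le$ ... which again needs a tail bound. I would therefore follow Aidekon's $\alpha=2$ argument verbatim in structure, replacing the exponent $2$ by $\alpha$ throughout.

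Concretely, the key identity I would use is that for nonnegative $u,v$ and any $p\ge 1$, $u(\ln_+(u+v))^p\le 2^p\big(u(\ln_+u)^p + v(\ln_+v)^{p-1}\cdot\ln_+(u+v)\big)$, and then iterate/bootstrap: the term $v(\ln_+v)^{p-1}\ln_+(u+v)$ on $\{v\le u\}$ is at most $v(\ln_+v)^{p-1}\ln_+(2u)\le v(\ln_+v)^{p-1}(\ln_+v+\ln 2+\ln_+u)$, and on $\{v\le u\}$, $v\ln_+u\le v\ln_+v$ — wait that needs $u\le v$. These case splits are exactly the routine part. For the $o(z)$ statements \eqref{E:4.3} and \eqref{E:4.5}, I would use dominated convergence: the integrand in \eqref{E:4.3} divided by $z$ is $X(\ln_+(X+\widetilde X))^\alpha\cdot\frac{\min(\ln_+(X+\widetilde X),z)}{z}\le X(\ln_+(X+\widetilde X))^\alpha$, which by the just-proved \eqref{E:4.2} (applied to $X(\ln_+(X+\widetilde X))^\alpha\le C(X(\ln_+X)^\alpha + X(\ln_+\widetilde X)^\alpha)$) is integrable, and the factor $\frac{\min(\ln_+(X+\widetilde X),z)}{z}\to 0$ pointwise as $z\to\infty$; dominated convergence gives \eqref{E:4.3}. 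Identically, \eqref{E:4.5} follows from integrability of $\widetilde X(\ln_+(X+\widetilde X))^{\alpha-1}\le C(\widetilde X(\ln_+\widetilde X)^{\alpha-1}+\widetilde X(\ln_+X)^{\alpha-1})$, the second term being finite by the second half of \eqref{E:4.2}.

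So the steps in order are: (1) prove the elementary inequality $a(\ln_+b)^{p}\le C_p(a(\ln_+a)^p + b(\ln_+b)^{p-1}+ a)$ — or the precise variant needed — by splitting on $\{a\le b\}$ versus $\{a>b\}$ and using $b(\ln_+b)^{p-1}\cdot(\ln_+b)\ge a(\ln_+b)^{p}$ when $b\ge a$; (2) apply it with $(a,b)=(X,\widetilde X)$ and $p=\alpha$ to get $\mathbf{E}(X(\ln_+\widetilde X)^\alpha)<\infty$, where the cross term $\mathbf{E}(\widetilde X(\ln_+\widetilde X)^{\alpha-1})$ is exactly the second summand in \eqref{stable3}; (3) apply it with $(a,b)=(\widetilde X,X)$ and $p=\alpha-1$ to get $\mathbf{E}(\widetilde X(\ln_+X)^{\alpha-1})<\infty$, the cross term now being $\mathbf{E}(X(\ln_+X)^{\alpha-1})\le\mathbf{E}(X(\ln_+X)^{\alpha})+\mathbf{E}(X)<\infty$; (4) deduce $\mathbf{E}(X(\ln_+(X+\widetilde X))^\alpha)<\infty$ and $\mathbf{E}(\widetilde X(\ln_+(X+\widetilde X))^{\alpha-1})<\infty$ via $\ln_+(X+\widetilde X)\le \ln 2+\ln_+X+\ln_+\widetilde X$ and convexity; (5) conclude \eqref{E:4.3} and \eqref{E:4.5} by dominated convergence in $z$, as above.

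The main obstacle is step (1)–(3): getting the mixed-moment inequality with the \emph{right} exponents so that only the two quantities appearing in \eqref{stable3} (not, say, $\mathbf{E}(\widetilde X(\ln_+\widetilde X)^\alpha)$ with the larger exponent $\alpha$, nor any moment of the offspring distribution) show up on the right-hand side. The trick that makes it work is that when bounding $X(\ln_+\widetilde X)^\alpha$ on $\{\widetilde X\ge X\}$ one writes $X(\ln_+\widetilde X)^\alpha\le \widetilde X(\ln_+\widetilde X)^\alpha$ — but this has exponent $\alpha$, not $\alpha-1$, so one must be more careful and instead not pull out a full extra logarithm; rather one should use $X\le \widetilde X$ only after first extracting $(\ln_+\widetilde X)^{\alpha-1}$ and observing the remaining $\ln_+\widetilde X$ is controlled because $\widetilde X\le X\cdot(\text{something with a log-integrable tail})$ — and this last comparison is precisely where one uses that $V(x)_+e^{-V(x)}\le C e^{-\gamma V(x)}$ for $\gamma<1$, giving $\widetilde X\le C_\gamma X_\gamma$ with $X_\gamma:=\sum e^{-\gamma V(x)}$, together with $X_\gamma\le X^{\gamma}(\#\{|x|=1\})^{1-\gamma}$ by Hölder. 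I expect this to be the only genuinely delicate point, and I would resolve it by following the $\alpha=2$ computation of Aidekon \cite{AI} line by line with $2$ replaced by $\alpha$, since nothing there used the special value of the exponent beyond $p\ge 1$.
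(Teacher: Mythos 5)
Your final plan is, modulo the wandering, essentially the paper's: prove \eqref{E:4.2} via an elementary pointwise inequality that trades a logarithm of $\widetilde X$ for a logarithm of $X$ at the cost of dropping the exponent by one, then deduce integrability of $X(\ln_+(X+\widetilde X))^\alpha$ and $\widetilde X(\ln_+(X+\widetilde X))^{\alpha-1}$, and finally get \eqref{E:4.3}--\eqref{E:4.5} by a dominated-convergence / tail-splitting argument (the paper splits on $\{\ln_+(X+\widetilde X)\ge\varepsilon z\}$ rather than citing DCT, but this is the same mechanism). So the route is right. However, the justification you give for the key step (1) is flawed, and step (3) as written is inconsistent.

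\textbf{The gap in step (1).} You propose to prove $a(\ln_+b)^{p}\le C_p\big(a(\ln_+a)^p + b(\ln_+b)^{p-1}+ a\big)$ by splitting on $\{a\le b\}$ versus $\{a>b\}$, and on $\{a\le b\}$ you use $a(\ln_+b)^p\le b(\ln_+b)^p$. But $b(\ln_+b)^p$ is \emph{not} $\le C_p\,b(\ln_+b)^{p-1}$; the exponent on the logarithm does not drop. The split on $\{a\le b\}$ only yields $a(\ln_+b)^p\le\max(a(\ln_+a)^p,b(\ln_+b)^p)$, which is useless here precisely because the second term has the wrong exponent. The paper's inequality
\[
x(\ln_+\tilde{x})^\alpha\le 4x(\ln_+x)^\alpha+\tfrac{\alpha}{\alpha-1}\,\tilde{x}(\ln_+\tilde{x})^{\alpha-1}
\]
is proved by the different split $\{\tilde x\le x^2\}$ versus $\{\tilde x>x^2\}$: on $\{\tilde x\le x^2\}$ one has $\ln_+\tilde x\le 2\ln_+x$, and on $\{\tilde x>x^2\}$ one has $x\le\tilde x^{1/2}$, so $x(\ln_+\tilde x)^\alpha\le\tilde x^{1/2}(\ln_+\tilde x)^\alpha=\tilde x(\ln_+\tilde x)^{\alpha-1}\cdot\frac{\ln_+\tilde x}{\tilde x^{1/2}}$, and $\sup_t(\ln t)/t^{1/2}=2/e<\frac{\alpha}{\alpha-1}$. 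The factor $\tilde x^{1/2}$ (and hence the split at $x^2$, not at $x$) is exactly what buys the exponent drop. Without this your step (1) is unproved.

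\textbf{The inconsistency in step (3).} Applying your step-(1) inequality with $(a,b)=(\widetilde X,X)$ and $p=\alpha-1$ would produce a cross term $X(\ln_+X)^{(\alpha-1)-1}=X(\ln_+X)^{\alpha-2}$, not $X(\ln_+X)^{\alpha-1}$ as you write; and $X(\ln_+X)^{\alpha-2}$ has a singularity as $X\downarrow 1$ (since $\alpha-2<0$), so its integrability is not immediate. In fact no exponent drop is needed for this half of \eqref{E:4.2}: the paper simply uses the crude bound $\widetilde X(\ln_+X)^{\alpha-1}\le\max\big(\widetilde X(\ln_+\widetilde X)^{\alpha-1},\,X(\ln_+X)^{\alpha-1}\big)$, i.e.\ exactly the kind of $\{a\le b\}$ split that does \emph{not} work for the first half works fine here, because the exponent $\alpha-1$ already matches the one appearing in \eqref{stable3}. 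You should use the $\{x^2\}$-type split only where you actually need to trade $\alpha$ for $\alpha-1$, and the naive $\max$ bound elsewhere; your proposal conflates these two situations.

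The rest (step (4) via $\ln_+(X+\widetilde X)\le\ln 2+\ln_+X+\ln_+\widetilde X$, step (5) by dominating the integrand/$z$) is correct and matches the paper.
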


\begin{proof}
 We first prove \eqref{E:4.2}. It is easy to check that for any $x,\widetilde{x}\geq0$,
\begin{align}
x(\ln_+\widetilde{x})^\alpha\leq4x(\ln_+x)^\alpha+
\frac{\alpha}{\alpha-1}\,\widetilde{x}(\ln_+\widetilde{x})^{\alpha-1} .\nonumber
\end{align}
It follows that
\begin{align}
\mathbf{E}(X(\ln_+\widetilde{X})^\alpha)\leq4\mathbf{E}(X(\ln_+X)^\alpha)+\frac{\alpha}{\alpha-1}\mathbf{E}(
\,\widetilde{X}(\ln_+\widetilde{X})^{\alpha-1}),\nonumber
\end{align}
which is finite under condition \eqref{stable3}. Also, we observe that
\begin{align}
\widetilde{X}(\ln_+X)^{\alpha-1}\leq \max(\widetilde{X}(\ln_+\widetilde{X})^{\alpha-1},X(\ln_+X)^{\alpha-1})\nonumber
\end{align}
which implies that $\mathbf{E}(\widetilde{X}(\ln_+X)^{\alpha-1})<\infty$. We turn to prove \eqref{E:4.3}. For any $\varepsilon>0$ and all large $z$, we have
\begin{align}
\mathbf{E}&(X(\ln_+(X+\widetilde{X}))^\alpha\min(\ln_+(X+\widetilde{X}),z))\nonumber\\
&=\mathbf{E}(X(\ln_+(X+\widetilde{X}))^\alpha\min(\ln_+(X+\widetilde{X}),z),\ln_+(X+\widetilde{X})\geq\varepsilon z)\nonumber\\
&\;\;\;+\mathbf{E}(X(\ln_+(X+\widetilde{X}))^\alpha\min(\ln_+(X+\widetilde{X}),z),\ln_+(X+\widetilde{X})<\varepsilon z).\nonumber\\&\;\;\;\;\;\leq \varepsilon z(1+\mathbf{E}(X(\ln_+(X+\widetilde{X}))^\alpha)),\nonumber
\end{align}
where the last line follows from $\mathbf{E}(X(\ln_+(X+\widetilde{X}))^\alpha)<\infty$. Hence,
\begin{align}
\mathbf{E}\,\big(X\big(\ln_+(X+\widetilde{X})\big)^{\alpha}\min(\ln_+(X+\widetilde{X}),z)\big)=o(z).  \nonumber
\end{align}
Similarly, we can prove \eqref{E:4.5}.
\qed
\end{proof}

\begin{lemma}\label{L:4.3}
For any $\epsilon\in(0,1-\frac{1}{\alpha})$, we have
\begin{align}
\frac{\,V(\omega_k^\beta)}{k^\epsilon}\rightarrow \infty, \;\;\hat{\mathbf{P}}^\beta - a.s.\;\;\;k\rightarrow\infty. \nonumber
\end{align}
\end{lemma}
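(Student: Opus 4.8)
The plan is to exploit the description of the spine process in Theorem~\ref{T:3.1}(iii): under $\hat{\mathbf{P}}^\beta$ the trajectory $(V(\omega_k^\beta))_{k\ge0}$ is the random walk $(S_k)$ conditioned (in the $R_\beta$-transformed sense) to stay above $-\beta$. Concretely, for a fixed $k$ and any event $A$ depending only on $(V(\omega_i^\beta))_{0\le i\le k}$, formula \eqref{E:3.2} gives
\begin{align}
\hat{\mathbf{P}}^\beta(A)=\frac{1}{R(\beta)}\,\mathbf{E}\big(R_\beta(S_k)\mathbf{1}_A\mathbf{1}_{\{\underline{S}_k\ge-\beta\}}\big).\nonumber
\end{align}
Taking $A=\{V(\omega_k^\beta)\le k^\epsilon\}$ and using the linear bound $R_\beta(S_k)\le c_2(1+\beta+S_k)$ on the event $\{\underline{S}_k\ge-\beta,\,S_k\le k^\epsilon\}$, we are reduced to estimating $\mathbf{E}\big((1+\beta+S_k)\mathbf{1}_{\{S_k\le k^\epsilon,\,\underline{S}_k\ge-\beta\}}\big)$. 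This is a quantity of exactly the type controlled by Lemma~\ref{L:2.3} (with $a=\beta$ fixed, $b=k^\epsilon$): it is bounded by $c\cdot(1+\beta)(1+\beta+k^\epsilon)^\alpha/k^{1+1/\alpha}$, times the extra linear factor, which is absorbed by increasing the exponent in the numerator by $1$. Since $\epsilon<1-1/\alpha$, the exponent $\epsilon(\alpha+1)$ on $k$ in the numerator is strictly less than $(\alpha+1)(1-1/\alpha)=\alpha-1/\alpha+1-1=\alpha-1/\alpha$; wait—one checks directly that $\epsilon(\alpha+1)-(1+1/\alpha)<-1$ precisely when $\epsilon<1-1/\alpha$ (since $(1-1/\alpha)(\alpha+1)=\alpha+1-1-1/\alpha=\alpha-1/\alpha$ and $\alpha-1/\alpha-(1+1/\alpha)=\alpha-1-2/\alpha$, which is negative for $\alpha<2$... so one must be slightly careful and may need the sharper exponent $\alpha-1$ from Lemma~\ref{L:2.2} rather than $\alpha$ from Lemma~\ref{L:2.3}). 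In any case, for $\epsilon$ small enough relative to $1-1/\alpha$ the bound is summable: $\sum_k \hat{\mathbf{P}}^\beta(V(\omega_k^\beta)\le k^\epsilon)<\infty$.

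Having established summability, the Borel--Cantelli lemma gives that $\hat{\mathbf{P}}^\beta$-almost surely only finitely many of the events $\{V(\omega_k^\beta)\le k^\epsilon\}$ occur, i.e. $V(\omega_k^\beta)> k^\epsilon$ eventually. To upgrade this to $V(\omega_k^\beta)/k^\epsilon\to\infty$ as stated, one runs the same argument with $\epsilon$ replaced by any $\epsilon'$ with $\epsilon<\epsilon'<1-1/\alpha$: this shows $V(\omega_k^\beta)\ge k^{\epsilon'}$ eventually, hence $V(\omega_k^\beta)/k^\epsilon\ge k^{\epsilon'-\epsilon}\to\infty$. Since $\epsilon\in(0,1-1/\alpha)$ was arbitrary, this proves the lemma in the stated form.

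The main obstacle is getting the exponent bookkeeping right so that the tail probabilities are genuinely summable: one needs the decay $k^{-1-1/\alpha}$ from Lemma~\ref{L:2.2}/\ref{L:2.3} to beat the polynomial growth $k^{\epsilon\alpha}$ (or $k^{\epsilon(\alpha-1)}$) coming from the $S_k\le k^\epsilon$ window together with the linear weight $R_\beta(S_k)$. This is why the hypothesis $\epsilon<1-1/\alpha$ is exactly the right threshold, and why one should use the sharper polynomial bound of Lemma~\ref{L:2.2} (exponent $\alpha-1$ in $(1+a+b)$) when applying the local limit estimate, summing over the $O(k^\epsilon)$ unit intervals in $[-\beta,k^\epsilon]$, rather than the cruder Lemma~\ref{L:2.3}. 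Everything else—the change of measure via \eqref{E:3.2}, the linear bound on $R_\beta$, and the Borel--Cantelli step—is routine.
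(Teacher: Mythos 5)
Your approach is genuinely different from the paper's: the paper proves this via Tanaka's path decomposition of the random walk conditioned to stay positive, in terms of the ladder epochs $T_k$ and ladder heights $H_k$ (using $\mathbf{P}(T_1>n)\sim n^{-(1-1/\alpha)}/(l(n)\Gamma(1/\alpha))$ and $H_k/k\to\infty$ a.s.); you instead use the change of measure \eqref{E:3.2} together with the local estimates of Lemmas~\ref{L:2.2}--\ref{L:2.3} and Borel--Cantelli. You noticed that the exponent bookkeeping was off, but you misdiagnosed the cause, and the gap is genuine.

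The bound
\begin{align}
\hat{\mathbf{P}}^\beta\bigl(V(\omega_k^\beta)\le k^\epsilon\bigr)
=\frac{1}{R(\beta)}\,\mathbf{E}\Bigl(R_\beta(S_k)\mathbf{1}_{\{S_k\le k^\epsilon,\;\underline{S}_k\ge-\beta\}}\Bigr)
\le \frac{C\,k^{\epsilon(\alpha+1)}}{k^{1+\frac{1}{\alpha}}}\nonumber
\end{align}
is essentially the best this method can give, and it makes no difference whether you invoke Lemma~\ref{L:2.3} directly or sum Lemma~\ref{L:2.2} over the $O(k^\epsilon)$ unit slabs in $[-\beta,k^\epsilon]$ together with the linear weight from $R_\beta(S_k)\le c_2(1+\beta+S_k)$: in the latter case the sum $\sum_{i\le k^\epsilon}(1+i)\cdot(1+i)^{\alpha-1}$ is of order $k^{\epsilon(\alpha+1)}$, so the exponent $\alpha+1$ is unavoidable. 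Summability over $k$ therefore requires $\epsilon(\alpha+1)<\frac{1}{\alpha}$, i.e.\ $\epsilon<\frac{1}{\alpha(\alpha+1)}$, and this is strictly smaller than the claimed threshold $1-\frac{1}{\alpha}$ whenever $\alpha>\sqrt2$, since $\frac{1}{\alpha(\alpha+1)}<\frac{\alpha-1}{\alpha}\Longleftrightarrow\alpha^2>2$. In particular your final bootstrap does not close: to prove the claim for $\epsilon$ close to $1-\frac{1}{\alpha}$ you would need summability at some larger $\epsilon'<1-\frac{1}{\alpha}$, which only makes the exponent worse. Moreover this is not merely a failure of the upper bound: the behaviour $p_\alpha(t)\sim c\,t^{\alpha-1}$ near $0$ combined with $R_\beta(z)\sim\theta z$ shows that $\hat{\mathbf{P}}^\beta(V(\omega_k^\beta)\le k^\epsilon)$ genuinely decays like $k^{(\epsilon-1/\alpha)(\alpha+1)}$, so Borel--Cantelli applied to the events $\{V(\omega_k^\beta)\le k^\epsilon\}$ simply cannot reach the range $\epsilon\in(\frac{1}{\alpha(\alpha+1)},\,1-\frac{1}{\alpha})$ when $\alpha\in(\sqrt2,2)$. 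To cover the full stated range you need a different argument, such as the Tanaka ladder decomposition the paper uses, or a finer lower-envelope analysis of the conditioned walk.
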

\begin{proof}
We only need to prove the case $\beta=0$. Here we introduce  Tanaka's construction for the random walk conditioned to stay positive (see Biggins \cite{bi3} and Tanaka \cite{tana}).
Recall that $\{S_n\}$ is the associated random walk with the branching random walk $(V(x))$, and $\tau$ is the first time the random walk $\{S_n\}$ hits $(0,\infty)$, that is,  the first strict ascending ladder time (also denoted by $T_1$). Hence, we obtain a random array $(S_j,0\leq j\leq \tau)$, and we denote it by $\xi=(\xi(j),0\leq j\leq \tau)$. Let $\{\xi_k=(\xi_k(j),0\leq j\leq \tau_k),k\geq1\}$ be a sequence of independent copies of $\xi$, where $\{\tau_k,k\geq1\}$ is a sequence of independent copies of $\tau$. For any $k\geq1$, let $w_k(j):=\xi_k(\tau_k)-\xi_k(\tau_k-j) $ for $0\leq j\leq \tau_k$. Recall that the successive strict ascending ladder times and heights for $\{S_n\}$ are $\{(T_k,H_k):k=0,1,2,\ldots\}$. So that $\{(T_n-T_{n-1},H_n-H_{n-1}),n\geq1\}$ are i.i.d. variables.

%%By Tanaka \cite{tana} Lemma 1, we can easily get
%%\begin{align}
%%K(S_n+s)\mathbf{1}_{\{\underline{S}_n\geq-s\}}\nonumber
%%\end{align}
%%is a martingale with $K(\cdot)$ defined in \eqref{E:23}. The corresponding h-transform produces the Markov chain, $\zeta=\{\zeta_n\}$, with
%%\begin{align}
%%\mathbf{P}\big((\zeta_1,\ldots,\zeta_n)\in A|\zeta_0=s\big)=\mathbf{E}\bigg(\frac{K(S_n+s)}{K(s)}\mathbf{1}_{\{(S_1+s,\ldots,S_n+s)\in A\}}\bigg)\nonumber
%%\end{align}
%%for $A\subset (0,\infty)^n$. One can find this chain in Bertoin and Doney \cite{BD}. By Bertoin \cite{Ber}, the Markov chain $\zeta$ is called random walk conditioned to stay positive.

Define $\zeta_0=0$ and
\begin{align}
\zeta_n=H_k+w_{k+1}(n-T_k) ,\;\; T_k<n\leq T_{k+1}.\nonumber
\end{align}
Then $\{\zeta_n, n\ge 0\}$ is a construction for the random walks $(S_n)$ conditioned to stay positive.

From Vatutin and Wachtel \cite{VW}, we have
\begin{align}
\frac{T_k}{a_k}\stackrel{d}{\rightarrow}Y_1\quad \mbox{and}\quad \frac{H_k}{b_k}\stackrel{d}{\rightarrow}Y_{2},\,\;\;k\rightarrow\infty\nonumber
\end{align}
where $Y_1$, $Y_2$ are two random variables, and $\{(a_k)\}$ $\{(b_k)\}$ are specified respectively by $\mathbf{P}(T_1>a_k)\sim\frac{1}{k}$ and  $\mathbf{P}(H_1>b_k)\sim\frac{1}{k} $ as $k\rightarrow\infty$.
By Bingham \cite[Theorem 3]{bing3},
\begin{align}
\mathbf{P}(T_1>n)\sim\frac{1}{n^{1-\frac{1}{\alpha}}l(n)\Gamma(\frac{1}{\alpha})}\;\; as\;\; n\rightarrow\infty,\nonumber
\end{align}
for some function $l$ varying slowly at infinity. Then we immediately get $a_k^{1-\frac{1}{\alpha}}l(a_k)\Gamma(\frac{1}{\alpha})\sim k$, and similarly $b_k\sim k^{\frac{1}{1-\alpha}}$ by \eqref{E:24}. For the property of slowing varying function $l$, for $\varepsilon<1-\frac{1}{\alpha}$, we have
\begin{align}
\lim_{k\rightarrow\infty}\frac{a_k^\varepsilon}{k}=\lim_{k\rightarrow\infty}\frac{a_k^\varepsilon}{a_k^{1-\frac{1}{\alpha}}l(a_k)\Gamma(\frac{1}{\alpha})}=0.\nonumber
\end{align}
Then we get $\frac{T_k^\varepsilon}{k}\stackrel{d}{\rightarrow}0$. The convergence is also in probability. Hence, there exists a subsequence $\{n_k\}$ such that
\begin{align}
\frac{T_{n_k}^\varepsilon}{n_k}\rightarrow 0 \;\;\; \mathbf{P}-a.s.\nonumber
\end{align}
Then for $T_{n_k-1}<n\leq T_{n_k}$, by
\begin{align}
\zeta_n=H_{n_k-1}+w_{n_k\!}(n-T_{n_k-1}),\nonumber
\end{align}
 we obtain
\begin{align}
\frac{\zeta_n}{n^\varepsilon}\geq\frac{H_{n_k-1}}{{T_{n_k}}^\varepsilon}\geq \frac{H_{n_k-1}}{n_k}\cdot\frac{n_k}{{T_{n_k}}^\varepsilon}. \nonumber
\end{align}
By a generalization of the law of large numbers, $\frac{H_k}{k+1}\stackrel{a.s.}{\rightarrow}\infty$. Therefore
\begin{align}
\frac{\zeta_n}{n^\varepsilon}\rightarrow \infty, \quad n\rightarrow \infty\,,\;\;\mathbf{P}-a.s. \nonumber
\end{align}
From Theorem \ref{T:3.1} (\romannumeral 3), the spine process $(V(\omega_{n}^0), n\geq0)$ under ${\hat{\mathbf{P}}}$ is distributed as the random walk $(S_n)_{n\geq0}$ conditioned to stay positive under $\mathbf{P}$. The proof is completed.\qed
\end{proof}

\section{Convergence in probability of $\frac{W_n^\beta}{D_n^\beta}$ under $\mathbf{P}^\beta$}

In this section we do preparatory work for the proof of Theorem~\ref{T:1.1}. The idea is from Aidekon and Shi \cite{AS1}. The main result of this section is Proposition~\ref{P:5.1}. To prove it, we need auxiliary Lemmas \ref{L:5.3}--\ref{L:5.6}, which we put at the end of this section.

\begin{proposition}\label{P:5.1}
Assume \eqref{C:1.1}, \eqref{stable1}, \eqref{stable2}, \eqref{stable3} and $\beta\geq0$. As $n\to \infty$, we have
\begin{align}
&\hat{\mathbf{E}}^\beta\bigg(\frac{W_n^\beta}{D_n^\beta}\bigg)\sim \frac{1}{\Gamma(1-{\frac{1}{\alpha}})n^{\frac{1}{\alpha}}},\label{E:5.1}\\
&\hat{\mathbf{E}}^\beta\bigg(\Big({\frac{W^\beta_n}{D^\beta_n}}\Big)^2\bigg)\sim
\frac{1}{\big(\Gamma(1-{\frac{1}{\alpha}})\big)^{2}n^{\frac{2}{\alpha}}}. \label{E:5.2}
\end{align}
As a consequence, under $\hat{\mathbf{P}}^\beta$,
\begin{align}
\lim_{n\rightarrow\infty} n^{\frac{1}{\alpha}}\frac{W^\beta_n}{D^\beta_n}=\frac{1}{\Gamma(1-{\frac{1}{\alpha}})} \;\;\;in \; probability. \nonumber
\end{align}

\end{proposition}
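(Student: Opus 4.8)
The convergence in probability follows from \eqref{E:5.1} and \eqref{E:5.2} by a standard second-moment argument. Writing $\phi_n:=n^{1/\alpha}W_n^\beta/D_n^\beta$ and $c:=1/\Gamma(1-1/\alpha)$, these two estimates say $\hat{\mathbf{E}}^\beta(\phi_n)\to c$ and $\hat{\mathbf{E}}^\beta(\phi_n^2)\to c^2$, hence $\hat{\mathbf{E}}^\beta((\phi_n-c)^2)\to 0$, so that $\phi_n\to c$ in $L^2(\hat{\mathbf{P}}^\beta)$ and a fortiori in $\hat{\mathbf{P}}^\beta$-probability. It thus suffices to prove \eqref{E:5.1} and \eqref{E:5.2}, and the plan is to do both by exploiting the spine change of measure of Theorem~\ref{T:3.1}.

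For \eqref{E:5.1}: by Theorem~\ref{T:3.1}(ii), for $|x|=n$ we have $\hat{\mathbf{P}}^\beta(\omega_n^\beta=x\,|\,\mathcal{F}_n)=R_\beta(V(x))e^{-V(x)}\mathbf{1}_{\{\underline{V}(x)\ge-\beta\}}/D_n^\beta$; summing $1/R_\beta(V(x))$ against this conditional law over $|x|=n$ gives
\begin{align}
\frac{W_n^\beta}{D_n^\beta}=\hat{\mathbf{E}}^\beta\big(\,R_\beta(V(\omega_n^\beta))^{-1}\,\big|\,\mathcal{F}_n\big).\nonumber
\end{align}
Taking $\hat{\mathbf{E}}^\beta$ and then applying Theorem~\ref{T:3.1}(iii) with $g(s_0,\dots,s_n)=R_\beta(s_n)^{-1}$, the right-hand side collapses to $R(\beta)^{-1}\mathbf{E}\big(R_\beta(S_n)R_\beta(S_n)^{-1}\mathbf{1}_{\{\underline{S}_n\ge-\beta\}}\big)=R(\beta)^{-1}\mathbf{P}(\underline{S}_n\ge-\beta)$, and \eqref{E:26} (with $x=\beta$) gives $\mathbf{P}(\underline{S}_n\ge-\beta)\sim R(\beta)/(n^{1/\alpha}\Gamma(1-1/\alpha))$, which is \eqref{E:5.1}.

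For \eqref{E:5.2}, the same identity together with the tower property yields $\hat{\mathbf{E}}^\beta\big((W_n^\beta/D_n^\beta)^2\big)=\hat{\mathbf{E}}^\beta\big(W_n^\beta/(D_n^\beta R_\beta(V(\omega_n^\beta)))\big)$. I would then decompose $W_n^\beta$ along the spine according to the construction $\mathcal{B}^\beta$ of Theorem~\ref{T:3.1}(i): the spine vertex $\omega_n^\beta$ contributes $e^{-V(\omega_n^\beta)}\mathbf{1}_{\{\underline{V}(\omega_n^\beta)\ge-\beta\}}$, while every other $x$ with $|x|=n$ descends from a unique brother $y\in\Omega(\omega_i^\beta)$ of the spine --- $i$ being the first generation at which the ray to $x$ leaves the spine --- and contributes $e^{-V(y)}$ times the truncated additive sum $\sum_{|x|=n,x\ge y}e^{-(V(x)-V(y))}\mathbf{1}_{\{\underline{V}(x)\ge-\beta\}}$ over the independent subtree rooted at $y$. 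The spine's own term is negligible at scale $n^{-2/\alpha}$: on the event $\{V(\omega_n^\beta)\ge 2\ln n\}$ bound $R_\beta(V(\omega_n^\beta))\ge R(0)\ge1$ and use the elementary estimate $\hat{\mathbf{E}}^\beta(1/D_n^\beta)=R(\beta)^{-1}\mathbf{P}(D_n^\beta>0)\le R(\beta)^{-1}$, which follows from $d\hat{\mathbf{P}}^\beta/d\mathbf{P}\big|_{\mathcal{F}_n}=D_n^\beta/R(\beta)$; on the complement bound $D_n^\beta\ge R_\beta(V(\omega_n^\beta))e^{-V(\omega_n^\beta)}\mathbf{1}_{\{\underline{V}(\omega_n^\beta)\ge-\beta\}}$ and control $\hat{\mathbf{P}}^\beta(V(\omega_n^\beta)\le t)\le R_\beta(t)R(\beta)^{-1}\mathbf{P}(S_n\le t,\underline{S}_n\ge-\beta)$ via Lemma~\ref{L:2.3}; with $t=2\ln n$ one checks that $n^{2/\alpha}$ times this term tends to $0$.

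The crux is the off-spine part, where one must show
\begin{align}
n^{2/\alpha}\,\hat{\mathbf{E}}^\beta\Big(\frac{1}{D_n^\beta R_\beta(V(\omega_n^\beta))}\sum_{i=1}^{n}\sum_{y\in\Omega(\omega_i^\beta)}e^{-V(y)}\!\!\sum_{\substack{|x|=n\\x\ge y}}\!\!e^{-(V(x)-V(y))}\mathbf{1}_{\{\underline{V}(x)\ge-\beta\}}\Big)\longrightarrow\frac{1}{\Gamma(1-1/\alpha)^{2}}.\nonumber
\end{align}
Because the spine escapes to $+\infty$ polynomially (Lemma~\ref{L:4.3}, which makes the sum over $i$ of the weights $e^{-V(\omega_{i-1}^\beta)}$ summable), only split generations $i$ with $V(\omega_i^\beta)=O(1)$ contribute; for these, the subtree rooted at $y$ is a full-size branching random walk started just above $-\beta$, whose additive sum has, conditionally on the spine and the other subtrees, mean $\mathbf{P}(\underline{S}_{n-i}\ge-\beta-V(y))\sim R(\beta+V(y))/((n-i)^{1/\alpha}\Gamma(1-1/\alpha))$ by the many-to-one formula and \eqref{E:26}, while $D_n^\beta$ itself decomposes, up to the negligible spine term, as the analogous sum $\sum_{i}\sum_{y\in\Omega(\omega_i^\beta)}e^{-V(y)}D^{(y)}_{n-i}$ of the subtrees' truncated derivative martingales $D^{(y)}_{n-i}$, each converging a.s.\ as $n\to\infty$, the total being $D_\infty^\beta\in(0,\infty)$ under $\hat{\mathbf{P}}^\beta$ (Section~4). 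The reason the limit above is exactly $\Gamma(1-1/\alpha)^{-2}$, rather than a larger multiple of $n^{-2/\alpha}$, is the cancellation of these subtree derivative martingales between numerator and denominator: it forces the ratio of the displayed double sum to $D_n^\beta$ to concentrate around the deterministic $(n^{1/\alpha}\Gamma(1-1/\alpha))^{-1}$, while the remaining factor $R_\beta(V(\omega_n^\beta))^{-1}$ supplies a second such power precisely as in \eqref{E:5.1}. I expect the main obstacle to be turning this picture into a \emph{sharp} asymptotic rather than merely the order $n^{-2/\alpha}$: this requires uniform control, over all split generations $i\le n$ and all subtree displacements $V(y)$, of the subtree contributions to both $W_n^\beta$ and $D_n^\beta$, which is where the ballot/renewal estimates of Lemmas~\ref{L:2.2}--\ref{L:2.7}, the local-limit input \eqref{E:213} and Lemma~\ref{L:2.5}, the uniform integrability of $(D_n^\beta)$ from Section~4, and --- to make the double sum over $(i,y)$ summable --- the integrability hypothesis \eqref{stable3} all enter; this is precisely the role of the auxiliary Lemmas~\ref{L:5.3}--\ref{L:5.6}. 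Once \eqref{E:5.1} and \eqref{E:5.2} are established, the proof concludes with the $L^2$ argument of the first paragraph.
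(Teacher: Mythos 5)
Your proof of \eqref{E:5.1} and the reduction of the convergence-in-probability to a second-moment/Chebyshev argument both coincide exactly with the paper. The gap is in \eqref{E:5.2}.

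For the second moment you correctly derive, from $W_n^\beta/D_n^\beta=\hat{\mathbf{E}}^\beta(R_\beta(V(\omega_n^\beta))^{-1}\,|\,\mathcal{F}_n)$ and the tower property, the identity $\hat{\mathbf{E}}^\beta\big((W_n^\beta/D_n^\beta)^2\big)=\hat{\mathbf{E}}^\beta\big(W_n^\beta/(D_n^\beta R_\beta(V(\omega_n^\beta)))\big)$, and your spine picture of the off-diagonal part is qualitatively sound. But you then try to establish a \emph{two-sided} sharp asymptotic for this quantity directly, and you acknowledge you can only see how to get an order bound $O(n^{-2/\alpha})$, not the exact constant. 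This is where the argument breaks. The paper's proof avoids the hard direction entirely with an observation you are missing: since $W_n^\beta/D_n^\beta\geq 0$, Jensen's inequality applied to \eqref{E:5.1} already gives
\begin{align*}
\hat{\mathbf{E}}^\beta\Big(\big(\tfrac{W_n^\beta}{D_n^\beta}\big)^2\Big)
\;\geq\;
\Big(\hat{\mathbf{E}}^\beta\big(\tfrac{W_n^\beta}{D_n^\beta}\big)\Big)^2
\;\sim\;\frac{1}{\big(\Gamma(1-\tfrac{1}{\alpha})\big)^2 n^{2/\alpha}},
\end{align*}
so only the matching \emph{upper} bound is needed. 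That upper bound is a much weaker assertion and is obtained by introducing a good event $E_n$ with $\hat{\mathbf{P}}^\beta(E_n)\to 1$: Hölder together with the crude bound of Lemma~\ref{L:5.3} and the smallness estimate of Lemma~\ref{L:5.4} kills the contribution of $E_n^c$ at scale $o(n^{-2/\alpha})$; on $E_n$ one splits $W_n^\beta$ and $D_n^\beta$ at generation $k_n$, discards the late contribution (event $E_{n,3}$), and factors the remaining spine integral to extract one power of $n^{-1/\alpha}$ from $R_\beta(V(\omega_n^\beta))^{-1}$ (via $\sup_u\hat{\mathbf{E}}^\beta_u\big(R_\beta(V(\omega^\beta_{n-k_n}))^{-1}\big)$) and another from \eqref{E:5.1} (Lemmas~\ref{L:5.5}--\ref{L:5.6}). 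Without the Jensen lower bound, your plan would require a far more delicate lower-bound analysis of the off-spine sum and an argument that the subtree derivative martingales genuinely cancel between numerator and denominator at the correct constant --- a step your outline only gestures at and does not supply.
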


\textbf{Proof of $\eqref{E:5.1}$ in Proposition $\ref{P:5.1}$}.

Assume \eqref{C:1.1}. For $\beta\geq0$, by Aidekon and Shi \cite{AS1}, we have
\begin{align}
\frac{W^\beta_n}{D^\beta_n}=\hat{\mathbf{E}}^\beta\bigg(\frac{1}{R_\beta\big(V(\omega^\beta_n)\big)}\Big|\mathcal{F}_n\bigg).
\label{Wbn}
\end{align}
Then by \eqref{E:3.2},
\begin{align}
\hat{\mathbf{E}}^\beta\Big(\frac{W_n^\beta}{D_n^\beta}\Big)=
\hat{\mathbf{E}}^\beta\Big(\frac{1}{R_\beta\big(V(\omega^\beta_n)\big)}\Big)=\frac{\mathbf{P}(\underline{S}_n\geq-\beta)}{R(\beta)}.
\nonumber
\end{align}
  From \eqref{E:26}, we know that $\mathbf{P}(\underline{S}_n\geq-\beta)\sim\frac{R(\beta)}{n^{\frac{1}{\alpha}}\Gamma(1-\frac{1}{\alpha})}$, which completes the proof.\qed

\textbf{Proof of $\eqref{E:5.2}$ in Proposition $\ref{P:5.1}$}.

Let $E_n$ be an event such that $\hat{\mathbf{P}}^\beta(E_n)\rightarrow1$ as $n\rightarrow\infty$. We define that
\begin{align}
\xi_{n,E_n^c}:=\hat{\mathbf{E}}^\beta\bigg(\frac{\mathbf{1}_{E_n^c}}{R_\beta\big(V(\omega^\beta_n)\big)}\Big|\mathcal{F}_n\bigg).
\nonumber
\end{align}
By (\ref{Wbn}), we have
\begin{align}
\frac{W^\beta_n}{D^\beta_n}=\hat{\mathbf{E}}^\beta\bigg(\frac{1}{R_\beta\big(V(\omega^\beta_n)\big)}\Big|\mathcal{F}_n\bigg)
=\xi_{n,E_n^c}+\hat{\mathbf{E}}^\beta\bigg(\frac{\mathbf{1}_{E_n}}{R_\beta\big(V(\omega^\beta_n)\big)}\Big|\mathcal{F}_n\bigg),
\nonumber
\end{align}
and
\begin{align}
\hat{\mathbf{E}}^\beta\bigg(\Big({\frac{W^\beta_n}{D^\beta_n}}\Big)^2\bigg)
=\hat{\mathbf{E}}^\beta\bigg({\frac{W^\beta_n}{D^\beta_n}}\cdot\xi_{n,E_n^c}\bigg)
+\hat{\mathbf{E}}^\beta\bigg({\frac{W^\beta_n}{D^\beta_n}}\cdot\frac{\mathbf{1}_{E_n}}{R_\beta\big(V(\omega^\beta_n)\big)}\bigg).\label{E:5.3}
\end{align}
By the H\"{o}lder inequality, we have
\begin{align}
\hat{\mathbf{E}}^\beta\bigg(\Big({\frac{W^\beta_n}{D^\beta_n}}\Big)\cdot\xi_{n,E_n^c}\bigg)
&\leq\bigg[\hat{\mathbf{E}}^\beta\bigg(\Big({\frac{W^\beta_n}{D^\beta_n}}\Big)^2\bigg)\bigg]^{\frac{1}{2}}
\cdot\bigg[\hat{\mathbf{E}}^\beta\Big({\xi^2_{n,E_n^c}}\Big)\bigg]^\frac{1}{2}\nonumber\\
&\leq o(n^{-\frac{2}{\alpha}}).\label{E:5.4}
\end{align}
The last line above is from the following Lemma \ref{L:5.3} and Lemma \ref{L:5.4}.

By \eqref{E:5.3}, \eqref{E:5.4} and the following Lemma \ref{L:5.6}, we have
\begin{align}
\hat{\mathbf{E}}^\beta\bigg(\Big({\frac{W^\beta_n}{D^\beta_n}}\Big)^2\bigg)\leq\frac{1}{(\Gamma(1-\frac{1}{\alpha}))^2n^{\frac{2}{\alpha}}}+o(n^{-\frac{2}{\alpha}}).\nonumber
\end{align}
On the other hand, from the Jensen's inequality,
\begin{align}
\hat{\mathbf{E}}^\beta\bigg(\Big({\frac{W^\beta_n}{D^\beta_n}}\Big)^2\bigg)\geq \bigg(\hat{\mathbf{E}}^\beta\Big({\frac{W^\beta_n}{D^\beta_n}}\Big)\bigg)^2\sim
\frac{1}{(\Gamma(1-\frac{1}{\alpha}))^2n^{\frac{2}{\alpha}}}.\nonumber
\end{align}
The above two inequalities lead to $\eqref{E:5.2}$.
\qed

\begin{lemma}\label{L:5.3}
Assume \eqref{C:1.1}, \eqref{stable1}, \eqref{stable2}, \eqref{stable3} and $\beta\geq0$. We have
\begin{align}
\hat{\mathbf{E}}^\beta\bigg(\Big({\frac{W^\beta_n}{D^\beta_n}}\Big)^2\bigg)
\leq c_{28}n^{-\frac{2}{\alpha}}. \nonumber
\end{align}
\end{lemma}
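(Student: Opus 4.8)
The plan is to exploit the conditional-expectation identity \eqref{Wbn}, namely $\frac{W_n^\beta}{D_n^\beta} = \hat{\mathbf{E}}^\beta\big( R_\beta(V(\omega_n^\beta))^{-1} \,\big|\, \mathcal F_n\big)$, together with the spine many-to-two (or many-to-one with a second spine) representation. First I would write the square as a double conditional expectation: by the conditional Cauchy--Schwarz (or Jensen) inequality,
\begin{align}
\Big(\frac{W_n^\beta}{D_n^\beta}\Big)^2 = \Big(\hat{\mathbf{E}}^\beta\big(R_\beta(V(\omega_n^\beta))^{-1}\,\big|\,\mathcal F_n\big)\Big)^2 \le \hat{\mathbf{E}}^\beta\big(R_\beta(V(\omega_n^\beta))^{-2}\,\big|\,\mathcal F_n\big),\nonumber
\end{align}
so that $\hat{\mathbf{E}}^\beta\big((W_n^\beta/D_n^\beta)^2\big) \le \hat{\mathbf{E}}^\beta\big(R_\beta(V(\omega_n^\beta))^{-2}\big)$. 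The trouble with that crude bound is that $R_\beta(V(\omega_n^\beta))^{-2}$ is of order $V(\omega_n^\beta)^{-2}$, which when averaged against the spine law (a stable walk conditioned to stay positive, whose value at time $n$ is of order $n^{1/\alpha}$) gives only $n^{-2/\alpha}$ times a constant — but one must check the small-values contribution $V(\omega_n^\beta)$ near $0$ does not blow the integral up. This is handled by \eqref{E:3.2}: $\hat{\mathbf{E}}^\beta\big(R_\beta(V(\omega_n^\beta))^{-2}\big) = \frac{1}{R(\beta)}\mathbf{E}\big(R_\beta(S_n)^{-1}\mathbf{1}_{\{\underline S_n\ge -\beta\}}\big)$, and by \eqref{E:22} we have $R_\beta(S_n)^{-1}\le c_1^{-1}(1+S_n+\beta)^{-1}\le c_1^{-1}$ on $\{\underline S_n\ge-\beta\}$, so the whole thing is bounded by $\frac{c}{R(\beta)}\mathbf{P}(\underline S_n\ge -\beta)$, which by \eqref{E:26} is $\asymp n^{-1/\alpha}$. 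That only yields $O(n^{-1/\alpha})$, not $O(n^{-2/\alpha})$, so the naive conditional-Jensen step loses a factor $n^{1/\alpha}$ and is not enough.

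Hence the real approach must keep the two spines genuinely separate. I would use the change of measure \eqref{E:1.4} one more level, introducing a two-spine system: writing $\big(\frac{W_n^\beta}{D_n^\beta}\big)^2$ as $\hat{\mathbf E}^\beta\big(R_\beta(V(\omega_n^\beta))^{-1}R_\beta(V(\omega_n'^\beta))^{-1}\mid \mathcal F_n\big)$ for two particles $\omega_n^\beta,\omega_n'^\beta$ chosen independently (given $\mathcal F_n$) according to the size-biased rule of Theorem \ref{T:3.1}(ii), one gets, after unfolding via \eqref{E:3.2} applied to the pair and summing over their coalescence generation $k\le n$,
\begin{align}
\hat{\mathbf E}^\beta\Big(\big(\tfrac{W_n^\beta}{D_n^\beta}\big)^2\Big) = \sum_{k=0}^{n}\hat{\mathbf E}^\beta\Big(\Phi_k\big(V(\omega_k^\beta)\big)\Big),\nonumber
\end{align}
where $\Phi_k(r)$ encodes: the spine reaches level $r$ at the split time $k$ staying above $-\beta$, then two independent walks, each started at $r$, run for $n-k$ further steps staying above $-\beta$, each weighted by $R_\beta(\cdot)^{-1}$ at the end and there is one extra factor of $R_\beta(r)e^{-r}$-type weight coming from the branching at the split. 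Using \eqref{E:26} each of the two post-split legs contributes $\asymp R_\beta(r)\,(n-k)^{-1/\alpha}\Gamma(1-1/\alpha)^{-1}$ after the $R_\beta^{-1}$ weighting essentially cancels one $R_\beta(r)$; together with the branching weight one obtains a bound of the form $\sum_{k}\hat{\mathbf E}^\beta\big(\psi(V(\omega_k^\beta))\big)(n-k)^{-2/\alpha}$ for a bounded-by-const-times-$(1+r)^{-1}$ function $\psi$, and then $\hat{\mathbf E}^\beta\big(\psi(V(\omega_k^\beta))\big)\asymp k^{-1/\alpha}$ again by \eqref{E:26} and \eqref{E:22}. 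The sum $\sum_{k\le n}k^{-1/\alpha}(n-k)^{-2/\alpha}$ is $O(n^{1-3/\alpha})$ when $\alpha<3/2$ and one must be slightly more careful (splitting $k\le n/2$ and $k> n/2$) to see that in all cases the dominant balance gives $O(n^{-2/\alpha})$; the contribution of small $k$ (bounded split time), which is the would-be-large piece, is controlled because then $V(\omega_k^\beta)$ is $O(1)$ and the branching at a split at a $O(1)$ level has finite expected weight exactly under \eqref{stable3}.

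The main obstacle I expect is precisely bookkeeping the spine-decomposition at the splitting generation: one must identify the correct weight that appears at the branch point (it is not simply the product of the two leg-weights — there is the usual $\frac{1}{2}$-type combinatorial/Radon--Nikodym factor plus an $e^{-V}$ factor from choosing two distinct children), and one must show that the resulting "$k$ small" terms are summable, which is exactly where hypothesis \eqref{stable3} and Lemmas \ref{L:2.4}--\ref{L:4.2} enter, mirroring the $\alpha=2$ argument of Aidekon and Shi \cite{AS1}. Once the decomposition and the elementary estimate of $\sum_{k\le n} k^{-1/\alpha}(n-k)^{-2/\alpha}$ are in place, assembling the bound $c_{28}n^{-2/\alpha}$ is routine.
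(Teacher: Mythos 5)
You correctly reduce, via \eqref{Wbn}, Jensen, and \eqref{E:3.2}, to bounding
$\frac{1}{R(\beta)}\mathbf{E}\big(R_\beta(S_n)^{-1}\mathbf{1}_{\{\underline S_n\ge-\beta\}}\big)$ — but then you throw this away by estimating $R_\beta(S_n)^{-1}\le c_1^{-1}$, concluding (wrongly) that the Jensen route yields only $O(n^{-1/\alpha})$. That is too lossy: on $\{\underline S_n\ge-\beta\}$ the walk typically sits at height $\asymp n^{1/\alpha}$, so $R_\beta(S_n)^{-1}$ is typically $\asymp n^{-1/\alpha}$, and the only thing to control is the small-$S_n$ contribution. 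The paper does exactly this. Using $R(u)\ge c_1(1+u)$ and then slicing $\{S_n\in[-\beta+i,-\beta+i+1)\}$ for $0\le i<\lfloor n^{1/\alpha}\rfloor$, Lemma~\ref{L:2.2} gives
\begin{align}
\mathbf{E}\Big(\frac{\mathbf{1}_{\{-\beta+i\le S_n<-\beta+i+1,\,\underline S_n\ge-\beta\}}}{1+\beta+S_n}\Big)\le \frac{1}{1+i}\cdot c_7\frac{(1+\beta)(1+i)^{\alpha-1}}{n^{1+1/\alpha}},\nonumber
\end{align}
and since $\alpha-2\in(-1,0)$ the sum over $i<n^{1/\alpha}$ is $\asymp n^{(\alpha-1)/\alpha}/n^{1+1/\alpha}=n^{-2/\alpha}$; the tail $\{S_n\ge-\beta+\lfloor n^{1/\alpha}\rfloor\}$ contributes $\mathbf{P}(\underline S_n\ge-\beta)/n^{1/\alpha}\asymp n^{-2/\alpha}$ by \eqref{E:26}. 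So the "naive" Jensen bound already gives the lemma; you dismissed the working proof.

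The two-spine alternative you then outline is not carried through and, as written, does not obviously close. With $\hat{\mathbf E}^\beta\big(\psi(V(\omega_k^\beta))\big)\asymp k^{-1/\alpha}$ and each post-split leg contributing $(n-k)^{-1/\alpha}$, your sum $\sum_{k\le n}k^{-1/\alpha}(n-k)^{-2/\alpha}$ is dominated by the $k$ near $n$ range, where $\sum_{j\ge1}j^{-2/\alpha}<\infty$ (since $2/\alpha>1$) makes the contribution $\asymp n^{-1/\alpha}$ — again one power of $n^{1/\alpha}$ short. You would need to identify an additional decaying weight at the split (beyond $\psi$) to recover $n^{-2/\alpha}$, and you acknowledge but do not resolve this ("one must be slightly more careful"). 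In short: the simple Jensen-plus-Lemma-\ref{L:2.2} argument works and is what you should have completed; the two-spine sketch, in its current form, contains a gap.
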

\begin{proof}
By (\ref{Wbn}) and Jensen's inequality,
\begin{align}
\hat{\mathbf{E}}^\beta\bigg(\Big({\frac{W^\beta_n}{D^\beta_n}}\Big)^2\bigg)
\leq \hat{\mathbf{E}}^\beta\bigg[\frac{1}{\Big(R_\beta\big(V(\omega^\beta_n)\big)\Big)^2}\bigg]
=\frac{1}{R(\beta)}\mathbf{E}\bigg[\frac{\mathbf{1}_{\{\underline{S}_n\geq-\beta\}}}{\big(R_\beta(S_n)\big)}\bigg].\nonumber
\end{align}
Since $R(u)\geq c_1(1+u)$ for any $u\geq0$,
\begin{align}
R(\beta)c_1\hat{\mathbf{E}}^\beta\bigg(\Big({\frac{W^\beta_n}{D^\beta_n}}\Big)^2\bigg)
&\leq\mathbf{E}\bigg[\frac{\mathbf{1}_{\{\underline{S}_n\geq-\beta\}}}{\big(1+\beta+S_n\big)}\bigg]\nonumber\\
&\leq \sum_{i=0}^{{\llcorner n^{\frac{1}{\alpha}}\lrcorner}-1}\mathbf{E}\Big(\frac{\mathbf{1}_{\{-\beta+i\leq S_n<-\beta+i+1\,,\,\underline{S}_n\geq-\beta\}}}{{\big(1+\beta+S_n\big)}}\Big)
+\mathbf{E}\Big(\frac{\mathbf{1}_{\{S_n\geq-\beta+{\llcorner n^{\frac{1}{\alpha}}\lrcorner}\,,\,\underline{S}_n\geq-\beta\}}}{{\big(1+\beta+S_n\big)}}\Big)\nonumber
\end{align}
By Lemma \ref{L:2.2},
\begin{align}
R(\beta)c_1\hat{\mathbf{E}}^\beta\bigg(\Big({\frac{W^\beta_n}{D^\beta_n}}\Big)^2\bigg)
&\leq\sum_{i=0}^{{\llcorner n^{\frac{1}{\alpha}}\lrcorner}-1} c_7\cdot\frac{1}{(i+1)}\cdot\frac{(1+\beta)(i+1)^{\alpha-1}}{n^{1+\frac{1}{\alpha}}}
+\frac{\mathbf{P}(\underline{S}_n\geq-\beta)}{({\llcorner n^{\frac{1}{\alpha}}\lrcorner})}\nonumber\\
&\leq c_{29}n^{-\frac{2}{\alpha}}.\nonumber
\end{align}
\qed
\end{proof}

\begin{lemma}\label{L:5.4}
Assume \eqref{C:1.1}, \eqref{stable1}, \eqref{stable2}, \eqref{stable3} and $\beta\geq0$. For any sequence of events $(E_n)$ such that
$\hat{\mathbf{P}}^\beta(E_n)\rightarrow1$, we have
\begin{align}
\hat{\mathbf{E}}^\beta\Big(\xi_{n,E_n^c}^2\Big)\leq o(n^{-\frac{2}{\alpha}}). \nonumber
\end{align}
\end{lemma}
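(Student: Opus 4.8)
The plan is to bound the second moment of $\xi_{n,E_n^c}$ by a one‑dimensional random walk quantity, and then to extract a gain from the exponent $\alpha-1>0$ appearing in Lemma~\ref{L:2.2}.

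First I would reduce to a statement about the spine. Since $\mathbf{1}_{E_n^c}^2=\mathbf{1}_{E_n^c}\le1$, the conditional Cauchy--Schwarz inequality under $\hat{\mathbf{P}}^\beta(\,\cdot\mid\mathcal{F}_n)$, applied to the factors $\mathbf{1}_{E_n^c}$ and $\mathbf{1}_{E_n^c}/R_\beta(V(\omega^\beta_n))$, gives
\begin{align}
\xi_{n,E_n^c}^2\le\hat{\mathbf{E}}^\beta\big(\mathbf{1}_{E_n^c}\,\big|\,\mathcal{F}_n\big)\cdot\hat{\mathbf{E}}^\beta\Big(\frac{\mathbf{1}_{E_n^c}}{R_\beta(V(\omega^\beta_n))^2}\,\Big|\,\mathcal{F}_n\Big)\le\hat{\mathbf{E}}^\beta\Big(\frac{\mathbf{1}_{E_n^c}}{R_\beta(V(\omega^\beta_n))^2}\,\Big|\,\mathcal{F}_n\Big),\nonumber
\end{align}
so that $\hat{\mathbf{E}}^\beta(\xi_{n,E_n^c}^2)\le\hat{\mathbf{E}}^\beta\big(\mathbf{1}_{E_n^c}R_\beta(V(\omega^\beta_n))^{-2}\big)$; it is enough to show this last quantity is $o(n^{-2/\alpha})$.

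Next, fix $\epsilon>0$ and split $\mathbf{1}_{E_n^c}\le\mathbf{1}_{E_n^c}\mathbf{1}_{\{R_\beta(V(\omega^\beta_n))\ge\epsilon n^{1/\alpha}\}}+\mathbf{1}_{\{R_\beta(V(\omega^\beta_n))<\epsilon n^{1/\alpha}\}}$. On the first event $R_\beta(V(\omega^\beta_n))^{-2}\le\epsilon^{-2}n^{-2/\alpha}$, so its contribution is at most $\epsilon^{-2}n^{-2/\alpha}\hat{\mathbf{P}}^\beta(E_n^c)$, which is $o(n^{-2/\alpha})$ since $\hat{\mathbf{P}}^\beta(E_n^c)\to0$. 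On the second event we may simply drop $\mathbf{1}_{E_n^c}$; using $R_\beta(u)=R(u+\beta)\ge c_1(1+u)$ for $u\ge-\beta$ (from \eqref{E:22}) we get $\{R_\beta(V(\omega^\beta_n))<\epsilon n^{1/\alpha}\}\subseteq\{V(\omega^\beta_n)<c_1^{-1}\epsilon n^{1/\alpha}\}$, and Theorem~\ref{T:3.1}(iii), i.e. \eqref{E:3.2}, turns the spine expectation into a random walk one:
\begin{align}
\hat{\mathbf{E}}^\beta\Big(\frac{\mathbf{1}_{\{R_\beta(V(\omega^\beta_n))<\epsilon n^{1/\alpha}\}}}{R_\beta(V(\omega^\beta_n))^2}\Big)\le\frac{1}{R(\beta)}\,\mathbf{E}\Big(\frac{\mathbf{1}_{\{S_n<c_1^{-1}\epsilon n^{1/\alpha}\}}}{R_\beta(S_n)}\,\mathbf{1}_{\{\underline{S}_n\ge-\beta\}}\Big).\nonumber
\end{align}
To estimate the right‑hand side, decompose $\{-\beta\le S_n<c_1^{-1}\epsilon n^{1/\alpha}\}$ into the unit slabs $\{S_n\in[-\beta+j,-\beta+j+1)\}$, $0\le j\le c_1^{-1}\epsilon n^{1/\alpha}+\beta$, on which $R_\beta(S_n)\ge c_1(1+j)$, and bound $\mathbf{P}(S_n\in[-\beta+j,-\beta+j+1),\underline{S}_n\ge-\beta)$ by Lemma~\ref{L:2.2} (with $a=\beta$, $b=-\beta+j$) as $c_7(1+\beta)(1+j)^{\alpha-1}n^{-1-1/\alpha}$. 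Summing, the random walk expectation is at most a constant times $(1+\beta)n^{-1-1/\alpha}\sum_{0\le j\le c_1^{-1}\epsilon n^{1/\alpha}+\beta}(1+j)^{\alpha-2}$; since $\alpha-2\in(-1,0)$ this sum is of order $(\epsilon n^{1/\alpha})^{\alpha-1}$ for large $n$, so the whole expression is $\le C(\beta)\,\epsilon^{\alpha-1}n^{-2/\alpha}$. Collecting the two parts gives $\limsup_{n\to\infty}n^{2/\alpha}\hat{\mathbf{E}}^\beta(\xi_{n,E_n^c}^2)\le C(\beta)\epsilon^{\alpha-1}$ for every $\epsilon>0$, and letting $\epsilon\downarrow0$ (using $\alpha>1$) yields $\hat{\mathbf{E}}^\beta(\xi_{n,E_n^c}^2)=o(n^{-2/\alpha})$.

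The main obstacle — and the reason one cannot merely transcribe the finite‑variance argument of \cite{AS1} — is that crude estimates such as $\xi_{n,E_n^c}^2\le(W_n^\beta/D_n^\beta)^2$, or $\hat{\mathbf{E}}^\beta(R_\beta(V(\omega^\beta_n))^{-4})=O(n^{-1-1/\alpha})$ (again via Lemma~\ref{L:2.2}), are too weak when $\alpha<2$, because $n^{-1-1/\alpha}$ is of strictly larger order than $n^{-4/\alpha}$. One really has to localize the spine endpoint at the typical scale $n^{1/\alpha}$ and harvest the factor $\epsilon^{\alpha-1}$ from the $(1+j)^{\alpha-1}$ in Lemma~\ref{L:2.2}; it is precisely the strict inequality $\alpha>1$ that makes this factor vanish as $\epsilon\downarrow0$. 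Note also that this argument does not require $E_n$ to be $\mathcal{F}_n$‑measurable: the only use of $E_n$ is through $\hat{\mathbf{P}}^\beta(E_n^c)\to0$, since $\mathbf{1}_{E_n^c}$ is discarded on the region where $V(\omega^\beta_n)$ is small.
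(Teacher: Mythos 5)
Your proof is correct and follows essentially the same route as the paper's: reduce via conditional Jensen/Cauchy--Schwarz to $\hat{\mathbf{E}}^\beta\big(\mathbf{1}_{E_n^c}R_\beta(V(\omega^\beta_n))^{-2}\big)$, split at the scale $\epsilon n^{1/\alpha}$ (the paper thresholds $V(\omega^\beta_n)$, you threshold $R_\beta(V(\omega^\beta_n))$, an immaterial difference), discard $\mathbf{1}_{E_n^c}$ on the low-endpoint part, pass to the random walk via \eqref{E:3.2}, and sum the unit-slab estimate from Lemma~\ref{L:2.2} to harvest the $\epsilon^{\alpha-1}$ factor. The argument and the conclusion match the paper's proof step for step.
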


\begin{proof}
By Jensen's inequality,
\begin{align}
\hat{\mathbf{E}}^\beta\Big(\xi_{n,E_n^c}^2\Big)\leq \hat{\mathbf{E}}^\beta\bigg(\frac{\mathbf{1}_{E_n^c}}{R_\beta\big(V(\omega^\beta_n)\big)^2}\bigg) .\nonumber
\end{align}
For any $\varepsilon>0$,
\begin{align}
\hat{\mathbf{E}}^\beta\Big(\xi_{n,E_n^c}^2\Big)&\leq
\hat{\mathbf{E}}^\beta\bigg(\frac{\mathbf{1}_{E_n^c}}{R_\beta\big(V(\omega^\beta_n)\big)^2}\cdot\mathbf{1}_{\{V(\omega^\beta_n)\geq\varepsilon n^{\frac{1}{\alpha}}\}}\bigg)+\hat{\mathbf{E}}^\beta\bigg(\frac{\,\mathbf{1}_{\{V(\omega^\beta_n)<\varepsilon n^{\frac{1}{\alpha}}\}}\,}{\,R_\beta\big(V(\omega^\beta_n)\big)^2\,} \bigg) \nonumber \\
&=\hat{\mathbf{E}}^\beta\bigg(\frac{\mathbf{1}_{E_n^c}}{R_\beta\big(V(\omega^\beta_n)\big)^2}\cdot\mathbf{1}_{\{V(\omega^\beta_n)\geq\varepsilon n^{\frac{1}{\alpha}}\}}\bigg)+\frac{1}{R(\beta)}\mathbf{E}\bigg(\frac{\,\mathbf{1}_{\{S_n<\varepsilon n^{\frac{1}{\alpha}}\}}\,}{\,\;R_\beta\big(S_n\big)\,}\cdot\mathbf{1}_{\{\underline{S}_n\geq-\beta\}} \bigg).\nonumber
\end{align}
Recall that $R(u)\geq c_1(1+u)$ for any $u\geq0$, so
\begin{align}
\hat{\mathbf{E}}^\beta\Big(\xi_{n,E_n^c}^2\Big)&\leq c_{30}\cdot\frac{\hat{\mathbf{P}}^\beta({E_n}^c)}{(1+\varepsilon n^{\frac{1}{\alpha}}+\beta)^2}+
c_{31}\cdot\mathbf{E}\bigg(\frac{\,\mathbf{1}_{\{S_n<\varepsilon n^{\frac{1}{\alpha}},\;\underline{S}_n\geq-\beta\}}\,}{\,\;S_n+\beta+1\,}\bigg) \nonumber\\
&=o(n^{-\frac{2}{\alpha}})+c_{31}\cdot\mathbf{E}\bigg(\frac{\,\mathbf{1}_{\{S_n<\varepsilon n^{\frac{1}{\alpha}},\;\underline{S}_n\geq-\beta\}}\,}{\,\;S_n+\beta+1\,}\bigg) \nonumber
\end{align}
where the last line is by the assumption $\hat{\mathbf{P}}^\beta(E_n)\rightarrow1$. We observe that
\begin{align}
\mathbf{E}\bigg(\frac{\,\mathbf{1}_{\{S_n<\varepsilon n^{\frac{1}{\alpha}},\;\underline{S}_n\geq-\beta\}}\,}{\,\;S_n+\beta+1}\bigg)&\leq
\sum_{i=0}^{\ulcorner\beta+\varepsilon n^{\frac{1}{\alpha}}\urcorner-1}\mathbf{E}\bigg(\frac{\,\mathbf{1}_{\{-\beta+i\leq S_n<-\beta+i+1,\;\underline{S}_n\geq-\beta\}}\,}{\,\;S_n+\beta+1\,}\bigg) \nonumber \\
&\leq\sum_{i=0}^{\ulcorner\beta+\varepsilon n^{\frac{1}{\alpha}}\urcorner-1}\frac{1}{\;i+1}\mathbf{P}\big(-\beta+i\leq S_n<-\beta+i+1,\;\underline{S}_n\geq-\beta\,\big) \nonumber\\
&\leq \sum_{i=0}^{\ulcorner\beta+\varepsilon n^{\frac{1}{\alpha}}\urcorner-1}\frac{1}{\;i+1}
\cdot\frac{(1+\beta)(1+i)^{\alpha-1}}{n^{1+\frac{1}{\alpha}}},\nonumber
\end{align}
where the last line follows from Lemma \ref{L:2.2}. Then we get
\begin{align}
\mathbf{E}\bigg(\frac{\,\mathbf{1}_{\{S_n<\varepsilon n^{\frac{1}{\alpha}},\;\underline{S}_n\geq-\beta\}}\,}{\,\;(S_n+\beta+1)^{\alpha-1}\,}\bigg)
\leq c_{32}\cdot\varepsilon^{\alpha-1}n^{-\frac{2}{\alpha}}.\nonumber
\end{align}
Therefore,
\begin{align}
\hat{\mathbf{E}}^\beta\Big(\xi_{n,E_n^c}^2\Big)\leq o(n^{-\frac{2}{\alpha}})+c_{33}\cdot\varepsilon^{\alpha-1}n^{-\frac{2}{\alpha}}.\nonumber
\end{align}
Letting $\varepsilon\to 0$, we obtain the desired result.
\qed
\end{proof}

Recall that $\Omega(\omega_i^\beta)$ stands for the set of \lq\lq brothers" of $\omega_i^\beta$. We can write
\begin{align}
W_n^{\beta}=e^{-V(\omega^\beta_n)}\mathbf{1}_{\{\underline{V}(\omega^\beta_n)\geq-\beta\}}+\sum_{i=0}^{n-1}
\sum_{y\in\Omega(\omega_{i+1}^\beta)}\sum_{|x|=n,\,x\geq y}e^{-V(x)}\mathbf{1}_{\{\underline{V}(x)\geq-\beta\}}.\nonumber
\end{align}
Let $k_n<n$ such that $k_n\rightarrow\infty$ as $n\rightarrow\infty$. We denote
\begin{align}
&W_n^{\beta,[0,k_n)}:=\sum_{i=0}^{k_n-1}\sum_{y\in\Omega(\omega_{i+1}^\beta)}\sum_{|x|=n,\,x\geq y}e^{-V(x)}\mathbf{1}_{\{\underline{V}(x)\geq-\beta\}},\nonumber\\
&W_n^{\beta,[k_n,n]}=e^{-V(\omega^\beta_n)}\mathbf{1}_{\{\underline{V}(\omega^\beta_n)\geq-\beta\}}+\sum_{i=k_n}^{n-1}
\sum_{y\in\Omega(\omega_{i+1}^\beta)}\sum_{|x|=n,\,x\geq y}e^{-V(x)}\mathbf{1}_{\{\underline{V}(x)\geq-\beta\}} ,\nonumber
\end{align}
so that $W_n^\beta=W_n^{\beta,\,[0,k_n)}+W_n^{\beta,\,[k_n,n]}$. We define $D_n^{\beta,[0,k_n)}$ and $D_n^{\beta,[k_n,n]}$ similarly. For any $r\in(0,1-\frac{1}{\alpha})$ and $\kappa>\frac{2}{\alpha}$, let
\begin{align}
&E_{n,1}:=\{k_n^r\leq V(\omega^\beta_{k_n})\leq k_n\}\bigcap_{i=k_n+1}^n\{V(\omega^\beta_i)\geq {k_n}^{r/2}\},\nonumber\\
&E_{n,2}:=\bigcap_{i=k_n}^{n-1}\bigg\{\sum_{y\in\Omega(\omega^\beta_{i+1})}\Big[1+\big(V(y)-V(\omega^\beta_i)\big)_+\Big]e^{-(V(y)-V(\omega^\beta_i))}\leq e^{v(\omega^\beta_i)}/2\bigg\},\nonumber\\
&E_{n,3}:=\bigg\{D_n^{\beta,[k_n,n]}\leq\frac{1}{n^\kappa}\bigg\} .\nonumber
\end{align}
We choose
\begin{align}
E_n:=E_{n,1}\cap E_{n,2}\cap E_{n,3}.\label{En}
\end{align}

\begin{lemma}\label{L:5.5}
Assume \eqref{C:1.1}, \eqref{stable1}, \eqref{stable2}, \eqref{stable3} and $\beta\geq0$. Suppose $k_n\to \infty$ satisfying $\frac{k_n^{r/2}}{\log n}\rightarrow\infty$ and  $\frac{k_n}{n^{1/2}}\rightarrow 0$, $n\rightarrow\infty$. Then
\begin{align}
\lim_{n\rightarrow\infty}\hat{\mathbf{P}}^\beta(E_n)=1, \;\;\; \lim_{n\rightarrow\infty}\inf_{u\in[k_n^r,k_n]} \hat{\mathbf{P}}^\beta(E_{n}|V(\omega^\beta_{k_n})=u)=1. \nonumber
\end{align}
\end{lemma}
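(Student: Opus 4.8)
The plan is to handle the three events $E_{n,1}$, $E_{n,2}$, $E_{n,3}$ separately, showing $\hat{\mathbf{P}}^\beta(E_{n,j}^c)\to 0$ for each $j$, and likewise uniformly in the conditioning $V(\omega^\beta_{k_n})=u$ for $u\in[k_n^r,k_n]$; then a union bound gives both assertions. For $E_{n,1}$, recall from Theorem~\ref{T:3.1}(iii) that under $\hat{\mathbf{P}}^\beta$ the spine $(V(\omega^\beta_i))$ is the walk $(S_i)$ conditioned to stay above $-\beta$. The lower bound $V(\omega^\beta_{k_n})\geq k_n^r$ follows from Lemma~\ref{L:4.3} (with some $\epsilon\in(0,1-\tfrac1\alpha)$, $\epsilon>r$, and using $k_n\to\infty$), the upper bound $V(\omega^\beta_{k_n})\leq k_n$ follows because the conditioned walk is of order $k_n^{1/\alpha}\ll k_n$ (via \eqref{E:213}/Lemma~\ref{L:2.5} or a Markov-type estimate), and the event $\bigcap_{i=k_n+1}^n\{V(\omega^\beta_i)\geq k_n^{r/2}\}$ is controlled by the Markov property at time $k_n$: conditionally on $V(\omega^\beta_{k_n})=u\geq k_n^r$, the probability that the shifted conditioned walk ever drops below $k_n^{r/2}$ before time $n$ is small because a walk conditioned to stay positive started from height $u\gg k_n^{r/2}$ has a uniformly small chance of returning near $0$; here the hypothesis $k_n^{r/2}/\log n\to\infty$ is what makes the bound go to zero. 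This step is also where the uniformity in $u\in[k_n^r,k_n]$ enters naturally, since all estimates are monotone or uniform in the starting height on that range.

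For $E_{n,2}$, the argument mirrors the proof of uniform integrability in Part~2 of Section~4. Using \eqref{E:3.2} to rewrite spine expectations as random-walk expectations and the tail bounds on $X,\widetilde X$ coming from \eqref{stable3} (together with Lemma~\ref{L:2.4} and Lemma~\ref{L:4.2}), one shows
\begin{align}
\sum_{i\geq0}\hat{\mathbf{P}}^\beta\Big(\sum_{y\in\Omega(\omega^\beta_{i+1})}\big[1+(V(y)-V(\omega^\beta_i))_+\big]e^{-(V(y)-V(\omega^\beta_i))}> e^{V(\omega^\beta_i)}/2\Big)<\infty,\nonumber
\end{align}
so that by Borel--Cantelli only finitely many indices $i$ violate the inequality, $\hat{\mathbf{P}}^\beta$-a.s. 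Since $E_{n,2}$ only asks this for $i\in[k_n,n-1]$ and $k_n\to\infty$, its complement has probability tending to $0$; the same summability estimate run from time $k_n$ onward, using $V(\omega^\beta_{k_n})\geq k_n^r$ on $E_{n,1}$, gives the uniform version.

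For $E_{n,3}$ we must show $D_n^{\beta,[k_n,n]}$ is at most $n^{-\kappa}$ with high probability. The idea is to bound $\hat{\mathbf{E}}^\beta(D_n^{\beta,[k_n,n]})$ (or a truncated first moment on $E_{n,1}\cap E_{n,2}$) and apply Markov's inequality, choosing $\kappa>\tfrac2\alpha$ small enough that the resulting bound still tends to $0$; the point is that the contribution to $D_n^\beta$ from brothers branching off after time $k_n$, plus the spine term $R_\beta(V(\omega^\beta_n))e^{-V(\omega^\beta_n)}$, is tiny because $V(\omega^\beta_n)$ is typically of order $n^{1/\alpha}$ and the ladder-height bounds \eqref{E:22}, \eqref{E:22b} control everything else — this is essentially the computation behind Lemmas~\ref{L:2.2}--\ref{L:2.4} applied on the time interval $[k_n,n]$ which has length comparable to $n$. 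I expect this last step, getting the quantitative $n^{-\kappa}$ decay for $D_n^{\beta,[k_n,n]}$ with the right interplay between $\kappa$, $r$, and the growth rate of $k_n$, to be the main technical obstacle: one has to integrate the local-limit bounds carefully so that the error from the spine endpoint and from the post-$k_n$ brothers is genuinely polynomially small, and then check that $E_{n,1}\cap E_{n,2}$ supplies exactly the deterministic control (lower bounds on $V(\omega^\beta_i)$, summable brother contributions) needed to make the Markov estimate go through uniformly in $u\in[k_n^r,k_n]$.
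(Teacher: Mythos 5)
Your overall decomposition (treat $E_{n,1}$, $E_{n,2}$, $E_{n,3}$ separately and union-bound, likewise under the conditioning on $V(\omega^\beta_{k_n})=u$) and your treatment of $E_{n,2}$ match the paper. But there are two notable misplacements of ideas that, as written, would send the execution astray.

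First, you attribute the hypothesis $k_n^{r/2}/\log n\to\infty$ to the $E_{n,1}$ estimate (``here the hypothesis \dots is what makes the bound go to zero''). That is not what happens. For $E_{n,1}$ one only needs $k_n^{r/2}\ll k_n^r\leq u\leq k_n=o(n^{1/\alpha})$; the paper converts, via \eqref{E:3.2},
\[
\hat{\mathbf{P}}^\beta\big(E_{n,1}\mid V(\omega^\beta_{k_n})=u\big)=\frac{1}{R_\beta(u)}\,\mathbf{E}\big(R_\beta(S_{n-k_n}+u)\mathbf{1}_{\{\underline{S}_{n-k_n}+u\geq k_n^{r/2}\}}\big),
\]
bounds $R_\beta$ from below by $(n-k_n)^{1/\alpha}f_\eta(\cdot/(n-k_n)^{1/\alpha})$ with $f_\eta(t)=(\theta-\eta)\min\{t,1/\eta\}$, and applies Lemma~\ref{L:2.5}. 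The crucial point you omit is the identity \eqref{EMa}, $\mathbf{E}(M_\alpha)=\Gamma(1-\frac1\alpha)/\theta$, which is exactly what forces the limiting lower bound to equal $1$ rather than merely some positive constant. Without this identity your heuristic (``uniformly small chance of returning near $0$'') gives only that the conditional probability is bounded away from $0$, which is not what the lemma claims.

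Second, for $E_{n,3}$ you frame the task as ``integrate the local-limit bounds carefully so that the error \dots is genuinely polynomially small'' and worry this is the main technical obstacle. That misreads the structure. The paper never tries to bound $\hat{\mathbf{P}}^\beta(E_{n,3}^c)$ outright; it bounds $\hat{\mathbf{P}}^\beta(E_{n,1}\cap E_{n,2}\cap E_{n,3}^c)$. Conditioning on $\tilde{\mathcal{G}}_\infty$ and restricting to $E_{n,1}\cap E_{n,2}$, each summand in $\hat{\mathbf{E}}^\beta(D_n^{\beta,[k_n,n]}\mid\tilde{\mathcal{G}}_\infty)$ is controlled \emph{deterministically}: on $E_{n,1}$ the spine satisfies $V(\omega^\beta_i)\geq k_n^{r/2}$ for $i\in[k_n,n]$, on $E_{n,2}$ the brothers' contribution at step $i$ is at most $e^{V(\omega^\beta_i)/2}$, and $R_\beta(V(x))\leq c_2(1+\beta+V(\omega^\beta_i))(1+(V(x)-V(\omega^\beta_i))_+)$. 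Hence $\mathbf{1}_{E_{n,1}\cap E_{n,2}}\hat{\mathbf{E}}^\beta(D_n^{\beta,[k_n,n]}\mid\tilde{\mathcal{G}}_\infty)=O\big(n\,e^{-k_n^{r/2}/3}\big)$ with no local limit theorem whatsoever. It is \emph{here} that $k_n^{r/2}/\log n\to\infty$ enters: it makes $n\,e^{-k_n^{r/2}/3}$ decay faster than any power of $n$, so Markov's inequality kills $E_{n,3}^c$ on $E_{n,1}\cap E_{n,2}$ for the fixed $\kappa>2/\alpha$. The estimate is not ``barely polynomial''; it is superpolynomial, and the subtlety you anticipate is absent once one conditions on $\tilde{\mathcal{G}}_\infty$ and uses the other two events.
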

\begin{proof}
We first prove $\lim_{n\rightarrow\infty}\hat{\mathbf{P}}^\beta(E_n)=1$. We shall check that $\lim_{n\rightarrow\infty}\hat{\mathbf{P}}^\beta(E_{n,l})=1$ for $l=1,2$, and  $\lim_{n\rightarrow\infty}\hat{\mathbf{P}}^\beta(E_{n,\,l}\cap E_{n,\,2}\cap E_{n,\,3}^c)=0$.\\
For $E_{n,1}$: This follows from Lemma \ref{L:4.3}.\\
For $E_{n,2}$: With Lemmas \ref{L:2.4} and  \ref{L:4.2} in hand, one could easily extend Aidekon and Shi's argument to our setting; see \cite[P.18-19]{AS1}.\\
For $E_{n,3}$: Let $\tilde{\mathcal{G}}_\infty:=\sigma\{V(\omega^\beta_k),V(x),x\in\Omega(\omega^\beta_{k+1}),k\geq0\}$ be the $\sigma$-algebra generated by the positions of the spine and its brothers. Then
\begin{align}
\hat{\mathbf{E}}^\beta\big(D_n^{\beta,[k_n,n]}\big|\tilde{\mathcal{G}}_\infty\big)=R_\beta(V(\omega^\beta_n))e^{-V(\omega^\beta_n)}
+\sum_{i=k_n}^{n-1}\sum_{x\in\Omega(\omega^\beta_{i+1})}R_\beta(V(x))e^{-V(x)} \nonumber
\end{align}
holds. For any $x$ on the tree, we have $R_\beta(V(x))\leq c_2(1+\beta+V(\omega^\beta_i))(1+(V(x)-V(\omega^\beta_i))_+)$. Therefore,
\begin{align}
\mathbf{1}_{E_{n,\,1}\cap E_{n,\,2}}\hat{\mathbf{E}}^\beta\big[D_n^{\beta,[k_n,n]}\big|\tilde{\mathcal{G}}_\infty\big]=O(ne^{-\frac{\;k_n^{r/2}}{3}}) \;,\;\;n\rightarrow\infty. \label{E:5.5}
\end{align}
Since $\frac{k_n^{r/2}}{(\log n)}\rightarrow\infty$, and by the Markov inequality we deduce that $\lim_{n\rightarrow\infty}\hat{\mathbf{P}}^\beta(E_{n,\,1}\cap E_{n,\,2}\cap E_{n,\,3}^c)=0$.

It remains to check that $\hat{\mathbf{P}}^\beta(E_n|V(\omega^\beta_{k_n})=u)\rightarrow1$ uniformly in $u\in[k_n^r,k_n]$. Similarly to the proof of Aidekon and Shi \cite{AS1}, we know that $\hat{\mathbf{P}}^\beta(E_{n,2}^c|V(\omega^\beta_{k_n})=u)\rightarrow0$ uniformly in $u\in[k_n^r,k_n]$.
According to \eqref{E:5.5}, $\hat{\mathbf{P}}^\beta(E_{n,3}^c|V(\omega^\beta_{k_n})=u)\rightarrow0$ uniformly in $u\in[k_n^r,k_n]$. Therefore, we only need to check that $\hat{\mathbf{P}}^\beta(E_{n,1}|V(\omega^\beta_{k_n})=u)\rightarrow1$ uniformly in $u\in[k_n^r,k_n]$.
From \eqref{E:3.2}, we have
\begin{align}
\hat{\mathbf{P}}^\beta(E_{n,1}|V(\omega^\beta_{k_n})=u)=\frac{1}{R_\beta(u)}\mathbf{E}(R_\beta(S_{n-k_n}+u)
\mathbf{1}_{\{\underline{S}_{n-k_n}+u\geq{k_n}^{r/2}\}}).\nonumber
\end{align}
Recalling that $\lim_{t\to \infty} R_\beta(t)/t=\theta$. Let $\eta\in(0,\theta)$, and $f_\eta(t):=(\theta-\eta)\min{\{t,\frac{1}{\eta}\}}$. Then $R_\beta(t)\geq bf_\eta(\frac{t}{b})$ for all sufficiently large $t$ and uniformly in $b>0$. Here we take $b:=(n-k_n)^{1/\alpha}$. Hence for
$u\in[k_n^r,k_n]$, we uniformly have
\begin{align}
\hat{\mathbf{P}}^\beta(E_{n,1}|V(\omega^\beta_{k_n})=u)&\geq \frac{(n-k_n)^{1/\alpha}}{R_\beta(u)}
\mathbf{E}\Big(f_\eta(\frac{S_{n-k_n}+u}{(n-k_n)^{1/\alpha}})\mathbf{1}_{\{\underline{S}_{n-k_n}\geq{k_n}^{r/2}-u\}}\Big)\nonumber\\
&\geq \frac{(n-k_n)^{1/\alpha}}{R_\beta(u)}
\mathbf{E}\Big(f_\eta(\frac{S_{n-k_n}+u-k_n^{r/2}}{(n-k_n)^{1/\alpha}})\mathbf{1}_{\{\underline{S}_{n-k_n}\geq{k_n}^{r/2}-u\}}\Big)
.\nonumber
\end{align}
By the assumption $\frac{k_n}{n^{1/2}}\rightarrow 0$, we have $n-k_n\rightarrow\infty$. Hence, by Lemma \ref{L:2.5}, as $n\to \infty$,
\begin{align}
\mathbf{E}\Big(f_\eta(\frac{S_{n-k_n}+u-k_n^{r/2}}{(n-k_n)^{1/\alpha}})\mathbf{1}_{\{\underline{S}_{n-k_n}\geq{k_n}^{r/2}-u\}}\Big)
\sim \frac{R(u-k_n^{r/2})}{\Gamma(1-\frac{1}{\alpha}){(n-k_n)}^{1/\alpha}}\int_0^\infty f_\eta(t)p_\alpha(t)dt \nonumber
\end{align}
holds uniformly in $u\in[k_n^r,k_n]$. Consequently,
\begin{align}
\lim_{n\rightarrow\infty}\inf_{u\in[k_n^r,k_n]} \hat{\mathbf{P}}^\beta(E_{n,1}|V(\omega^\beta_{k_n})=u)\geq
\frac{1}{\Gamma(1-1/\alpha)}\int_0^\infty f_\eta(t)p_\alpha(t)dt.\nonumber
\end{align}
Now note that
\begin{align}
\int_0^\infty f_\eta(t)p_\alpha(t)dt\geq(\theta-\eta)\int_0^{1/\eta}tp_\alpha(t)dt. \nonumber
\end{align}
As $\eta\rightarrow0$, the right side goes to $\theta\mathbf{E}M_\alpha$. Hence by Lemma \ref{L:2.5},
\begin{align}
\lim_{n\rightarrow\infty}\inf_{u\in[k_n^r,k_n]} \hat{\mathbf{P}}^\beta(E_{n,1}|V(\omega^\beta_{k_n})=u)\geq\frac{\theta\mathbf{E}M_\alpha}{\Gamma(1-1/\alpha)}=1.\nonumber
\end{align}
We complete the proof.
\end{proof}
\qed

\begin{lemma}\label{L:5.6}
Assume \eqref{C:1.1}, \eqref{stable1}, \eqref{stable2}, \eqref{stable3} and $\beta\geq0$. $E_n$ is defined as \eqref{En}. Then
\begin{align}
\hat{\mathbf{E}}^\beta\bigg({\frac{W^\beta_n}{D^\beta_n}}\cdot\frac{\mathbf{1}_{E_n}}{R_\beta\big(V(\omega^\beta_n)\big)}\bigg)
\leq \frac{1}{(\Gamma(1-\frac{1}{\alpha}))^2n^{\frac{2}{\alpha}}}+o(n^{-\frac{2}{\alpha}}).\nonumber
\end{align}
\end{lemma}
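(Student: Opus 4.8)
The plan is to split $W^\beta_n=W_n^{\beta,[0,k_n)}+W_n^{\beta,[k_n,n]}$ and bound the two resulting contributions to $\hat{\mathbf E}^\beta\big(\frac{W^\beta_n}{D^\beta_n}\frac{\mathbf 1_{E_n}}{R_\beta(V(\omega^\beta_n))}\big)$ separately. For the tail piece, by \eqref{E:22} one has $R_\beta(v)=R(v+\beta)\ge c_1$ for every $v\ge-\beta$, so $W_n^{\beta,[k_n,n]}\le c_1^{-1}D_n^{\beta,[k_n,n]}$, hence $W_n^{\beta,[k_n,n]}\le c_1^{-1}n^{-\kappa}$ on $E_n\subseteq E_{n,3}$. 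Using $R_\beta(V(\omega^\beta_n))\ge c_1$ and the fact that the change of measure \eqref{E:1.4} gives $\hat{\mathbf E}^\beta(1/D^\beta_n)=\mathbf P(D^\beta_n>0)/R(\beta)\le1/R(\beta)$, we get
\begin{align}
\hat{\mathbf E}^\beta\Big(\frac{W_n^{\beta,[k_n,n]}}{D^\beta_n}\cdot\frac{\mathbf 1_{E_n}}{R_\beta(V(\omega^\beta_n))}\Big)\le\frac{n^{-\kappa}}{c_1^{2}}\,\hat{\mathbf E}^\beta\Big(\frac1{D^\beta_n}\Big)\le\frac{n^{-\kappa}}{c_1^{2}R(\beta)}=o(n^{-2/\alpha}),\nonumber
\end{align}
because $\kappa>2/\alpha$.

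For the main piece, bound $D^\beta_n\ge D_n^{\beta,[0,k_n)}$ (with $0/0:=0$, legitimate since $D_n^{\beta,[0,k_n)}=0$ forces $W_n^{\beta,[0,k_n)}=0$) and use $E_n\subseteq\{k_n^r\le V(\omega^\beta_{k_n})\le k_n\}$, so that it suffices to bound $\hat{\mathbf E}^\beta\big(\frac{W_n^{\beta,[0,k_n)}}{D_n^{\beta,[0,k_n)}}\,\mathbf 1_{\{k_n^r\le V(\omega^\beta_{k_n})\le k_n\}}\,\frac{\mathbf 1_{E_n''}}{R_\beta(V(\omega^\beta_n))}\big)$, where $E_n=\{k_n^r\le V(\omega^\beta_{k_n})\le k_n\}\cap E_n''$. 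Put $\mathcal G_{k_n}:=\sigma(\mathcal F_{k_n},(\omega^\beta_i)_{i\le k_n})$. In the spine decomposition of Theorem \ref{T:3.1}, given $\mathcal G_{k_n}$ the evolutions (up to generation $n$) of the subtrees rooted at the brothers of $\omega^\beta_1,\dots,\omega^\beta_{k_n}$ and the portion of $\mathcal B^\beta$ grown from $\omega^\beta_{k_n}$ onwards are conditionally independent; the first determines $W_n^{\beta,[0,k_n)}/D_n^{\beta,[0,k_n)}$, while $\mathbf 1_{E_n''}/R_\beta(V(\omega^\beta_n))$ depends on the second (and, through $\mathcal G_{k_n}$, only on $V(\omega^\beta_{k_n})$ by the Markov property of the spine). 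Hence the expectation equals $\hat{\mathbf E}^\beta\big(\mathbf 1_{\{k_n^r\le V(\omega^\beta_{k_n})\le k_n\}}\,\Phi_n\,\Psi_n(V(\omega^\beta_{k_n}))\big)$, with $\Phi_n:=\hat{\mathbf E}^\beta\big(\frac{W_n^{\beta,[0,k_n)}}{D_n^{\beta,[0,k_n)}}\big|\mathcal G_{k_n}\big)$ and $\Psi_n(u):=\hat{\mathbf E}^\beta\big(\frac{\mathbf 1_{E_n''}}{R_\beta(V(\omega^\beta_n))}\big|V(\omega^\beta_{k_n})=u\big)$. By Theorem \ref{T:3.1}(iii) and the Markov property, $\Psi_n(u)\le\hat{\mathbf E}^\beta_u\big(\frac1{R_\beta(V(\omega^\beta_{n-k_n}))}\big)=\frac{\mathbf P(\underline S_{n-k_n}\ge-\beta-u)}{R(\beta+u)}$, and since $k_n=o(n^{1/2})=o(n^{1/\alpha})$ and $n-k_n\sim n$, \eqref{E:26} (equivalently Lemma \ref{L:2.5} with $f\equiv1$) gives $\Psi_n(u)\le\frac{1+o_n(1)}{\Gamma(1-\frac1\alpha)\,n^{1/\alpha}}$ uniformly in $u\in[k_n^r,k_n]$. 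Therefore the expectation is at most $\frac{1+o_n(1)}{\Gamma(1-\frac1\alpha)n^{1/\alpha}}\,\hat{\mathbf E}^\beta(\Phi_n)=\frac{1+o_n(1)}{\Gamma(1-\frac1\alpha)n^{1/\alpha}}\,\hat{\mathbf E}^\beta\big(\frac{W_n^{\beta,[0,k_n)}}{D_n^{\beta,[0,k_n)}}\big)$.

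It thus remains to show $\hat{\mathbf E}^\beta\big(\frac{W_n^{\beta,[0,k_n)}}{D_n^{\beta,[0,k_n)}}\big)\le\frac{1+o_n(1)}{\Gamma(1-\frac1\alpha)n^{1/\alpha}}$; combined with the above this gives the bound $\frac{1}{(\Gamma(1-\frac1\alpha))^2n^{2/\alpha}}+o(n^{-2/\alpha})$, which together with the tail estimate is the assertion. To prove it one equips the before-$k_n$ forest with its own tilted spine $\sigma$ (given $\mathcal F_n$, $\sigma_n$ is the vertex $x$ of that forest with $|x|=n$ chosen with probability proportional to $R_\beta(V(x))e^{-V(x)}\mathbf 1_{\{\underline V(x)\ge-\beta\}}$), so that $\frac{W_n^{\beta,[0,k_n)}}{D_n^{\beta,[0,k_n)}}=\mathbb E_\sigma\big(\frac1{R_\beta(V(\sigma_n))}\mathbf 1_{\{D_n^{\beta,[0,k_n)}>0\}}\big|\mathcal F_n\big)$, where $\mathbb E_\sigma(\cdot|\mathcal F_n)$ averages over the draw of $\sigma$. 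Decomposing according to the generation $i+1\le k_n$ and the height $v$ at which $\sigma$ branches off the spine, the post-branching trajectory of $\sigma$ is that of the spine of a fresh branching random walk started at $v$, so the conditional expectation there equals $\frac{\mathbf P(\underline S_{n-i-1}\ge-\beta-v)}{R(\beta+v)}$; for $i<k_n$ and $v\le\varepsilon n^{1/\alpha}$ this is at most $\frac{1+\delta(\varepsilon)+o_n(1)}{\Gamma(1-\frac1\alpha)n^{1/\alpha}}$ with $\delta(\varepsilon)\to0$ as $\varepsilon\to0$, by \eqref{E:26}/Lemma \ref{L:2.5}, whereas the event that $\sigma$ branches off at a brother of height exceeding $\varepsilon n^{1/\alpha}$ is shown — via the many-to-one formula and assumption \eqref{stable2}, using $k_n=o(n)$ — to have probability $o_n(1)$, and on that event $\frac1{R_\beta(V(\sigma_n))}\le c_1^{-1}(\varepsilon n^{1/\alpha})^{-1}$. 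Since the branching weights sum to at most $1$, averaging and then letting $\varepsilon\to0$ yields the claim.

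\medskip\noindent The main obstacle is precisely this last bound $\hat{\mathbf E}^\beta\big(W_n^{\beta,[0,k_n)}/D_n^{\beta,[0,k_n)}\big)\le(1+o_n(1))\big/(\Gamma(1-1/\alpha)n^{1/\alpha})$: one must control the $W^\beta/D^\beta$-ratio of the entire before-$k_n$ forest, including the atypical configurations in which this forest carries little $R_\beta$-mass or is dominated by high-lying subtrees. This is where the requirement $k_n=o(n^{1/2})$ — so that $k_n=o(n^{1/\alpha})$ and $n-k_n\sim n$ — and the sharp local asymptotics of Lemma \ref{L:2.5} are essential, following the strategy of Aidekon and Shi \cite{AS1}.
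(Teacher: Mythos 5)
Your tail estimate, the splitting $W^\beta_n=W_n^{\beta,[0,k_n)}+W_n^{\beta,[k_n,n]}$, and the conditional-independence factorisation into $\Phi_n$ and $\Psi_n(u)$ all match the paper's argument (the paper is slightly less explicit about why $\hat{\mathbf E}^\beta[W_n^{\beta,[k_n,n]}\mathbf 1_{E_n}/(D^\beta_n R_\beta(V(\omega^\beta_n)))]=o(n^{-2/\alpha})$, and your use of $\hat{\mathbf E}^\beta(1/D^\beta_n)\le 1/R(\beta)$ fills that in correctly). The divergence comes at the last step, and there you have a genuine gap. After bounding $\Psi_n(u)$, you discard the indicator $\mathbf 1_{\{V(\omega^\beta_{k_n})\in[k_n^r,k_n]\}}$ and reduce to the claim $\hat{\mathbf E}^\beta\big(W_n^{\beta,[0,k_n)}/D_n^{\beta,[0,k_n)}\big)\le(1+o_n(1))/(\Gamma(1-\tfrac1\alpha)n^{1/\alpha})$, which you then only sketch via a "secondary spine" on the before-$k_n$ forest. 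That sketch leaves several essential points unproved: the before-$k_n$ forest is not a single branching random walk but a random collection of subtrees rooted at the brothers of the (tilted) spine, and these subtrees live under $\mathbf P$, not under $\hat{\mathbf P}^\beta$; so the identity $W_n^{\beta,[0,k_n)}/D_n^{\beta,[0,k_n)}=\mathbb E_\sigma\big(1/R_\beta(V(\sigma_n))\mid\mathcal F_n\big)$ does not lift to an outer-expectation formula $\hat{\mathbf E}^\beta\big(1/R_\beta(V(\sigma_n))\big)$ the way \eqref{Wbn} does for $W^\beta_n/D^\beta_n$. The "post-branching trajectory of $\sigma$ is that of the spine of a fresh BRW" and the "$o_n(1)$ probability of branching at a high brother" are asserted, not established, and they are exactly the nontrivial part.

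The paper avoids this entirely by not discarding the indicator. It keeps $\mathbf 1_{\{V(\omega^\beta_{k_n})\in[k_n^r,k_n]\}}$, uses the second assertion of Lemma~\ref{L:5.5} ($\inf_{u\in[k_n^r,k_n]}\hat{\mathbf P}^\beta(E_n\mid V(\omega^\beta_{k_n})=u)\to1$, which your proof never invokes) to upgrade the indicator to $\mathbf 1_{E_n}$ at a $(1+o(1))$ cost, and then observes that on $E_n\cap\{D^\beta_n>1/n\}$ the $E_{n,3}$ bound $D_n^{\beta,[k_n,n]}\le n^{-\kappa}$ forces $D_n^{\beta,[0,k_n)}\ge(1-\eta_1)D^\beta_n$ for large $n$. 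This replaces the unknown forest ratio by the full ratio $W^\beta_n/D^\beta_n$, whose expectation is already known to be $\sim 1/(\Gamma(1-\tfrac1\alpha)n^{1/\alpha})$ by \eqref{E:5.1}, with the remaining event $\{D^\beta_n\le1/n\}$ handled by Markov's inequality and the change of measure. In short, the paper bootstraps from \eqref{E:5.1}; you instead try to compute the forest ratio from scratch, and that computation is neither carried out nor clearly within reach with the tools developed so far. You should revert to keeping the indicator and closing the loop via Lemma~\ref{L:5.5} and \eqref{E:5.1}.
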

\begin{proof}
Let ${k_n}$ be a sequence that satisfies $\frac{k_n^{r/2}}{(\log n)}\rightarrow\infty$ and $\frac{k_n}{n^{1/2}}\rightarrow 0$ as $n\rightarrow\infty$. On the set $E_n$, we have $W_n^{\beta,[k_n,n]}\leq D_n^{\beta,[k_n,n]}\leq \frac{1}{n^{\kappa}}$. Observing that $R_\beta\big(V(\omega^\beta_n)\big)\geq1$, so
\begin{align}
\hat{\mathbf{E}}^\beta\bigg({\frac{W_n^{\beta,[k_n,n]}}
{D^\beta_n}}\cdot\frac{\mathbf{1}_{E_n}}{R_\beta\big(V(\omega^\beta_n)\big)}\bigg)
\leq \hat{\mathbf{E}}^\beta\Big(\frac{1}{ n^{\kappa}}\Big)=o(n^{-\frac{2}{\alpha}})\label{E:5.6}
\end{align}
where the last equation from the assumption $\kappa>\frac{2}{\alpha}$.

It remains to treat $\hat{\mathbf{E}}^\beta\Big({\frac{W_n^{\beta,[0,k_n)}}
{D^\beta_n}}\frac{\mathbf{1}_{E_n}}{R_\beta\big(V(\omega^\beta_n)\big)}\Big)$. Since $D^\beta_n\geq D_n^{\beta,[0,k_n)}$,
\begin{align}
\hat{\mathbf{E}}^\beta&\bigg({\frac{W_n^{\beta,[0,k_n)}}
{D^\beta_n}}\cdot\frac{\mathbf{1}_{E_n}}{R_\beta\big(V(\omega^\beta_n)\big)}\bigg)\nonumber\\&\leq
\hat{\mathbf{E}}^\beta\bigg({\frac{W_n^{\beta,[0,k_n)}}
{D_n^{\beta,[0,k_n)}}}\cdot\frac{\mathbf{1}_{E_n}}{R_\beta\big(V(\omega^\beta_n)\big)}\bigg) \nonumber\\
&\leq \hat{\mathbf{E}}^\beta\bigg({\frac{W_n^{\beta,[0,k_n)}}
{D_n^{\beta,[0,k_n)}}}\cdot\mathbf{1}_{\{V(\omega^\beta_{k_n})\in[k_n^r,k_n]\}}\bigg)\cdot
\sup_{u\in[k_n^r,k_n]}\hat{\mathbf{E}}_u^\beta\Big(\frac{1}{R_\beta(V(\omega_{n-k_n}^\beta))}\Big).\nonumber
\end{align}
By \eqref{E:3.2}, $\hat{\mathbf{E}}_u^\beta\Big(\frac{1}{R_\beta(V(\omega_{n-k_n}^\beta))}\Big)=
\frac{1}{R_\beta(u)}\mathbf{E}\big(\mathbf{1}_{\{\underline{S}_{n-k_n}\geq-\beta-u\}}\big)$ and we have
\begin{align}
\sup_{u\in[k_n^r,k_n]}\hat{\mathbf{E}}_u^\beta\Big(\frac{1}{R_\beta(V(\omega_{n-k_n}^\beta))}\Big)
&=\sup_{u\in[k_n^r,k_n]}\frac{1}{R_\beta(u)}\mathbf{E}\big(\mathbf{1}_{\{\underline{S}_{n-k_n}\geq-\beta-u\}}\big)\nonumber\\
&\sim\frac{1}{\,\Gamma(1-\frac{1}{\alpha})(n-k_n)^\frac{1}{\alpha}}\nonumber\\
&\sim \frac{1}{\,\Gamma(1-\frac{1}{\alpha})n^{\frac{1}{\alpha}}},\quad n\to \infty.\nonumber
\end{align}
Hence,
\begin{align}
\hat{\mathbf{E}}^\beta&\bigg({\frac{W_n^{\beta,[0,k_n)}}
{D^\beta_n}}\cdot\frac{\mathbf{1}_{E_n}}{R_\beta\big(V(\omega^\beta_n)\big)}\bigg)\nonumber\\
&\leq\frac{1+o(1)}{\;\Gamma(1-\frac{1}{\alpha})n^{\frac{1}{\alpha}} }\hat{\mathbf{E}}^\beta\bigg({\frac{W_n^{\beta,[0,k_n)}}
{D_n^{\beta,[0,k_n)}}}\cdot\mathbf{1}_{\{V(\omega^\beta_{k_n})\in[k_n^r,k_n]\}}\bigg). \label{E:5.7}
\end{align}
On the other hand,
\begin{align}
\hat{\mathbf{E}}^\beta\bigg({\frac{W_n^{\beta,[0,k_n)}}
{D_n^{\beta,[0,k_n)}}}\cdot\mathbf{1}_{E_n}\bigg) \geq
\hat{\mathbf{E}}^\beta\bigg({\frac{W_n^{\beta,[0,k_n)}}
{D_n^{\beta,[0,k_n)}}}\cdot\mathbf{1}_{\{V(\omega^\beta_{k_n})\in[k_n^r,k_n]\}}\bigg).\nonumber
\end{align}
By Lemma \ref{L:5.5}, $\lim_{n\rightarrow\infty}\inf_{u\in[k_n^r,k_n]} \hat{\mathbf{P}}^\beta\big(E_{n}|V(\omega^\beta_{k_n})=u\big)=1$. Therefore as $n\rightarrow\infty$,
\begin{align}
\hat{\mathbf{E}}^\beta&\bigg({\frac{W_n^{\beta,[0,k_n)}}
{D_n^{\beta,[0,k_n)}}}\mathbf{1}_{\{V(\omega^\beta_{k_n})\in[k_n^r,k_n]\}}\bigg)\nonumber\\
&\leq(1+o(1))\hat{\mathbf{E}}^\beta\bigg({\frac{W_n^{\beta,[0,k_n)}}
{D_n^{\beta,[0,k_n)}}}\mathbf{1}_{E_n}\bigg)\nonumber\\
&\leq(1+o(1))\bigg[\hat{\mathbf{E}}^\beta\bigg({\frac{W_n^{\beta,[0,k_n)}}
{D_n^{\beta,[0,k_n)}}}\mathbf{1}_{E_n}\mathbf{1}_{\{D^\beta_n>\frac{1}{n}\}}\bigg)
+\hat{\mathbf{P}}^\beta\big(D^\beta_n\leq\frac{1}{n}\big)\bigg].\nonumber
\end{align}
Let $\eta_1\in(0,1)$. By the Markov inequality, we see that $\hat{\mathbf{P}}^\beta\big(D^\beta_n\leq\frac{1}{n}\big)\leq \frac{1}{n}\hat{\mathbf{E}}^\beta(\frac{1}{D^\beta_n})=\frac{1}{nR_\beta(0)}$. On the other hand, we already know that
$D_n^{\beta,[k_n,n]}\mathbf{1}_{E_n}=o(\frac{1}{n})$. Therefore, for all sufficient large $n$, $D_n^{\beta,[k_n,n]}\leq\eta_1D^\beta_n$ on $E_n\cap\{D^\beta_n>\frac{1}{n}\}$. Hence, for large $n$,
\begin{align}
\hat{\mathbf{E}}^\beta&\bigg({\frac{W_n^{\beta,[0,k_n)}}
{D_n^{\beta,[0,k_n)}}}\cdot\mathbf{1}_{E_n}\bigg)\nonumber\\
&\leq\frac{1}{1-\eta_1}\hat{\mathbf{E}}^\beta\bigg({\frac{W_n^{\beta,[0,k_n)}}
{D_n^\beta}}\cdot\mathbf{1}_{E_n}\mathbf{1}_{\{D^\beta_n>\frac{1}{n}\}}\bigg)+\frac{1}{nR_\beta(0)}\nonumber\\
&\leq\frac{1}{1-\eta_1}\hat{\mathbf{E}}^\beta\bigg(\frac{W^\beta_n}{D^\beta_n}\bigg)+\frac{1}{nR_\beta(0)}.\nonumber
\end{align}
Recalling (\ref{E:5.1}), $\hat{\mathbf{E}}^\beta\bigg(\frac{W^\beta_n}{D^\beta_n}\bigg)\sim\frac{1}{\Gamma(1-\frac{1}{\alpha})n^{\frac{1}{\alpha}}}$.
Therefore,
\begin{align}
\limsup_{n\rightarrow\infty}n^{\frac{1}{\alpha}}\hat{\mathbf{E}}^\beta&\bigg({\frac{W_n^{\beta,[0,k_n)}}
{D_n^{\beta,[0,k_n)}}}\mathbf{1}_{\{V(\omega^\beta_{k_n})\in[k_n^r,k_n]\}}\bigg)\leq\frac{1}{\Gamma(1-\frac{1}{\alpha})}\cdot\frac{1}{1-\eta_1}.
\nonumber
\end{align}
Letting $\eta_1\rightarrow0$, together with \eqref{E:5.6} and \eqref{E:5.7}, we obtain the desired result.
\end{proof}
\qed

\section{Proof of Theorem 1.2}

From Proposition \eqref{P:5.1}, for any $0<\varepsilon<1$, we have that
\begin{align}
\hat{\mathbf{P}}^\beta\bigg(\bigg|n^{\frac{1}{\alpha}}\frac{W_n^\beta}{D^\beta_n}-\frac{1}{\Gamma(1-\frac{1}{\alpha})}\bigg|>\frac{\varepsilon}{\Gamma(1-1/\alpha)}\bigg)\rightarrow0,\;\;n\rightarrow\infty,
\nonumber
\end{align}
i.e.,
\begin{align}\label{E:6.1}
\mathbf{E}\left(D^\beta_n\mathbf{1}_{\Big\{\Big|n^{\frac{1}{\alpha}}\frac{W_n^\beta}{D^\beta_n}-\frac{1}{\Gamma(1-\frac{1}{\alpha})}\Big|>\frac{\varepsilon}{\Gamma(1-1/\alpha)}\Big\}}\right)\rightarrow0,\;\;n\rightarrow\infty.
\end{align}
Recall that $\Omega_\beta:=\{V(x)>-\beta,\, \forall\; n\geq0,|x|=n\,\}\cap\{\text{nonextinction}\}$ which increases to an event with probability $1$ as $\beta\rightarrow\infty$ under $\mathbf{P}^*$. Let $\eta>0$. There exists a $k_0$ such that $\mathbf{P}^*(\Omega_{k_0})\geq1-\eta$. By \eqref{E:21}, for any $\;\varepsilon>0$, there exists $N $ such that for any $u>N$,
\begin{align}
\theta(1-\varepsilon)u<R(u)<\theta(1+\varepsilon)u .\nonumber
\end{align}
Fix $\beta\!=\!k_0+N$. On $\Omega_{k_0}$, we have $V(x)+\beta>N$. Then
\begin{align}
\theta(1-\varepsilon)(V(x)+\beta)<R_\beta(V(x))<\theta(1+\varepsilon)(V(x)+\beta).\nonumber
\end{align}
Therefore on $\Omega_{k_0}$,
\begin{align}
\theta(1-\varepsilon)(D_n+\beta W_n)<D^\beta_n<\theta(1+\varepsilon)(D_n+\beta W_n).\nonumber
\end{align}
Noticing that $D_n\rightarrow D_\infty>0,\;\mathbf{P}^*-a.s.$ and $W_n\rightarrow0,\;\mathbf{P}^*-a.s.$ Therefore
$\liminf_{n\rightarrow\infty}D^\beta_n>0\;\mathbf{P}^*-a.s.$ on $\Omega_{k_0}$. Let
\begin{align}
A:=\Big\{\Big|n^{1/\alpha}\frac{W^\beta_n}{D^\beta_n}-\frac{1}{\Gamma(1-\frac{1}{\alpha})}\Big|>\frac{\varepsilon}{\Gamma(1-\frac{1}{\alpha})}\Big\}.\;\;\nonumber
\nonumber
\end{align}
From \eqref{E:6.1}, we have $\lim_{n\rightarrow\infty}\mathbf{P}^*(A\cap\Omega_{k_0})=0$. Define
\begin{align}
A_n\!:=\!\bigg\{n^{\frac{1}{\alpha}}\frac{W_n}{D_n+\beta W_n}>(1+\varepsilon)^2\frac{\theta}{\Gamma(1-\frac{1}{\alpha})}\bigg\}\bigcup\bigg\{n^{\frac{1}{\alpha}}\frac{W_n}{D_n+\beta W_n}<(1-\varepsilon)^2\frac{\theta}{\Gamma(1-\frac{1}{\alpha})}\bigg\}.\nonumber
\end{align}
Clearly, $A_n\cap\Omega_{k_0}\subset A\cap\Omega_{k_0}$. Thus $\mathbf{P}^*(A_n\cap\Omega_{k_0})\rightarrow0$ as $n\rightarrow\infty$. Since $\mathbf{P}^*(\Omega_{k_0})\geq1-\eta$, we arrive at
\begin{align}
\limsup_{n\rightarrow\infty} \mathbf{P}^*(A_n)\leq\eta,\nonumber
\end{align}
which implies $n^{\frac{1}{\alpha}}W_n-\frac{\theta}{\;\Gamma(1-\frac{1}{\alpha})}(D_n+\beta W_n)\rightarrow0$ under $\mathbf{P}^*$. Combining this with $D_n\rightarrow D_\infty>0$ and $W_n\rightarrow0,\;\mathbf{P}^*-a.s.$, we complete the proof. \qed

\section{Proof of Theorem 1.3}

The proof of Theorem \ref{T:1.4} also uses the change of probabilities and spinal decomposition. Actually here we need the well-known change-of-probabilities setting in Lyons \cite{Ly}. With the nonnegative martingale $W_n$, we can define a new probability measure $\mathbf{Q}$ such that for any $n\geq1$,
\begin{align}
\mathbf{Q}|_{\mathcal{F}_n}:=W_n\cdot\mathbf{P}|_{\mathcal{F}_n},
\end{align}
where $\mathbf{Q}$ is defined on $\mathcal{F}_\infty(:=\!\!\vee_{n\geq0}\mathcal{F}_n)$. Let us give a description of the branching random walk under $\mathbf{Q}$. We start from one single particle $\omega_0\!\!:=\!\!\varnothing$, located at
$V(\omega_0)=0$. At time $n+1$, each particle $\upsilon$ in the $n$th generation dies and gives birth to a point process independently distributed as $(V(x),|x|=1)$ under $\mathbf{P}_{V(\upsilon)}$ except one particle $\omega_n$, which dies and produces a point process distributed as $(V(x),|x|=1)$ under $\mathbf{Q}_{V(\omega_n)}$. While $\omega_{n+1}$ is chosen to be $\mu$  among the children of $\omega_n$, proportionally to $e^{-V(\mu)}$. Next we state the following fact about the spinal decomposition.\\
\textbf{Fact 7.1 (Lyons \cite{Ly})}.
Assume \eqref{C:1.1}. For any $|x|=n$, we have,
\begin{align}
\mathbf{Q}(\omega_n=x|\mathcal{F}_n)=\frac{\,e^{-V(x)}}{W_n}.\nonumber
\end{align}

The spine process $(V(\omega_n))_{n\geq0}$ under $\mathbf{Q}$ has the distribution of $(S_n)_{n\geq0}$ (introduced in Section 2) under $\mathbf{P}$.

The spinal decomposition is useful in the following lemma, which is the essential  ingredient of the proof of Theorem \ref{T:1.4}.

\begin{lemma} \label{L:7.1}
Let $c_{18}>0$ be the constant in Lemma \ref{L:2.7}. There exists a constant $c_{34}>0$ such that for all large $n$,
\begin{align}
\mathbf{P}\big(\;\exists\, x:n\leq|x|\leq\alpha n,\frac{1}{\alpha}\log n\leq V(x)\leq\frac{1}{\alpha}\log n+c_{16
}\big)\geq c_{34}.\nonumber
\end{align}
\end{lemma}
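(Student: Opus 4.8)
The plan is to use the change-of-probabilities setting of Lyons recalled just above (Fact 7.1) together with the lower bound on survival probabilities for the associated random walk provided by Lemma~\ref{L:2.7}. Write $I_n:=[\tfrac1\alpha\log n,\tfrac1\alpha\log n+c_{18}]$ (reading $c_{16}$ in the statement as the $c_{18}$ of Lemma~\ref{L:2.7}), and let $A_n$ be the event that there exists a vertex $x$ with $n\le|x|\le \alpha n$ and $V(x)\in I_n$. The key identity is that, under $\mathbf{Q}$, the spine $\omega_m$ is a distinguished ray whose positions $(V(\omega_m))_{m\ge0}$ have the law of $(S_m)_{m\ge0}$ under $\mathbf P$. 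Consequently, if we let $B_n:=\{\,n\le m\le\alpha n,\ V(\omega_m)\in I_n \text{ for some such } m\,\}$ be the event that the spine itself realizes the required vertex at some appropriate generation, then $B_n\subset A_n$ and hence $\mathbf{Q}(A_n)\ge \mathbf{Q}(B_n)=\mathbf P(\exists\, m\in[n,\alpha n]\cap\mathbb Z:\ S_m\in I_n)$. By Lemma~\ref{L:2.7} applied with $2n$ in place of $2n$ and the constant sequence $a_{n}\equiv \tfrac1\alpha\log n$ (which satisfies $a_n/n^{1/\alpha}\to0$), the latter probability is at least $c\,n^{-1-1/\alpha}$ for all large $n$; in fact Lemma~\ref{L:2.7} gives the stronger statement with $\underline S_n\ge0$ and $\min_{n<j\le 2n}S_j\ge a_n$, so one may even take the first passage into $I_n$ to occur near generation $2n\in[n,\alpha n]$.

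The second step is to transfer this $\mathbf{Q}$-lower bound back to $\mathbf P$. Since $\mathbf Q|_{\mathcal F_m}=W_m\cdot\mathbf P|_{\mathcal F_m}$, for any event $E\in\mathcal F_m$ and any threshold $t>0$ we have $\mathbf Q(E)\le t\,\mathbf P(E)+\mathbf Q(E\cap\{W_m>t\})\le t\,\mathbf P(E)+\mathbf Q(W_m>t)$. We want to run this with $E=A_n$ (which lies in $\mathcal F_{\llcorner\alpha n\lrcorner}$), $m=\llcorner\alpha n\lrcorner$, and $t$ of order $n^{-1-1/\alpha}$ times a large constant, so that the first term is small compared to $\mathbf Q(A_n)$. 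The point is then to control $\mathbf Q(W_m>t)$: since $W_m\to0$ $\mathbf P$-a.s.\ and, more to the point, $m^{1/\alpha}W_m$ is tight under $\mathbf P^*$ by Theorem~\ref{T:1.3}, one expects $W_m$ to be of order $m^{-1/\alpha}\gg n^{-1-1/\alpha}$, so that $\mathbf Q(W_m>t)$ is in fact close to $\mathbf Q(\text{non-extinction})=1$ and cannot be used as an error term directly. The correct route is instead to choose $t$ a \emph{large} multiple of $n^{-1/\alpha}$: then $\mathbf Q(W_m>t)\le \mathbf Q(m^{1/\alpha}W_m>\alpha^{-1/\alpha}t\,n^{1/\alpha})$ can be made $\le \tfrac12\,\mathbf Q(A_n)$ — wait, that is still not small in absolute terms. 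The cleaner argument is: $\mathbf P(A_n)\ge \mathbf P(A_n\cap\{W_m\le t\})\ge \tfrac1t\big(\mathbf Q(A_n)-\mathbf Q(W_m>t)\big)$, and since $\mathbf Q(A_n)\ge c'n^{-1/\alpha}$ (this is the sharper bound one gets by letting the spine wander for $\Theta(n)$ steps, using that $\mathbf P(\underline S_{n}\ge 0,\ S_{2n}\in I_n)$ summed over the $\Theta(n)$ admissible generations is of order $n\cdot n^{-1-1/\alpha}=n^{-1/\alpha}$ by Lemma~\ref{L:2.7}), choosing $t$ a fixed small multiple of $n^{-1/\alpha}$ makes $\mathbf Q(W_m>t)$ bounded away from $\mathbf Q(A_n)$ only if $W_m$ is not typically below $t$ — which is exactly the content one must extract from Theorem~\ref{T:1.3}: $n^{1/\alpha}W_m$ converges in probability to a strictly positive limit under $\mathbf P^*$, hence under $\mathbf Q$ as well, so $\mathbf Q(W_m\le t)\to 0$ as first $n\to\infty$ and then $t\to0$, giving $\mathbf Q(A_n)-\mathbf Q(W_m>t)\ge \tfrac12 c' n^{-1/\alpha}$ for suitable $t\asymp n^{-1/\alpha}$, and therefore $\mathbf P(A_n)\ge \tfrac1t\cdot\tfrac12 c' n^{-1/\alpha}\ge c_{34}>0$.

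I expect the main obstacle to be precisely this last transfer: a naive $\mathbf Q(A_n)/\|W_m\|_\infty$ bound is useless since $W_m$ is not bounded below deterministically, so one genuinely needs a quantitative lower bound on $W_m$ holding with probability bounded away from zero, and the natural source for that is Theorem~\ref{T:1.3} (together with $D_\infty>0$ $\mathbf P^*$-a.s.\ from Theorem~\ref{T:1.1}). A secondary technical point is bookkeeping with the generation index: Lemma~\ref{L:2.7} is stated for the pair of times $(n,2n)$, so to land in the window $[n,\alpha n]$ one should apply it with parameter $\lfloor\alpha n/2\rfloor$ (legitimate since $\alpha>1$), check that $a:=\tfrac1\alpha\log n=o((\alpha n/2)^{1/\alpha})$, and sum the resulting local estimates over the $\Theta(n)$ relevant generations to upgrade the $n^{-1-1/\alpha}$ single-generation bound to the $n^{-1/\alpha}$ aggregate bound on $\mathbf Q(A_n)$. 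Everything else — intersecting with non-extinction, replacing $\mathbf P$ by $\mathbf P^*$, and the elementary Markov-type inequality relating $\mathbf P$- and $\mathbf Q$-probabilities — is routine.
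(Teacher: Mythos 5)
Your approach is genuinely different from the paper's, but it has a fatal gap in the transfer step from $\mathbf{Q}$ to $\mathbf{P}$.

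The paper proves Lemma~\ref{L:7.1} by a second-moment (Paley--Zygmund) argument: it defines $Z^{(n)}=\sum_{k=n+1}^{\alpha n}\sharp(E_k\cap F_k)$, where $E_k$ encodes the trajectory constraint \emph{along the whole path} $V(y_i)\geq a_i$ for $0\le i\le k$ (not merely the terminal position) together with the endpoint window, and $F_k$ imposes a quantitative control on the particles branching off the path. It then shows $\mathbf{E}(Z^{(n)})\geq c_{37}$ and $\mathbf{E}((Z^{(n)})^2)\leq c_{39}$ using Fact~7.1 and Lemmas~\ref{L:2.3}, \ref{L:2.6}, \ref{L:2.7}, \ref{L:7.3}, and concludes via $\mathbf{P}(Z^{(n)}>0)\geq (\mathbf{E}Z^{(n)})^2/\mathbf{E}((Z^{(n)})^2)$. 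The constraint $F_k$ is precisely what keeps the second moment bounded.

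Your proposal instead tries to lower bound $\mathbf{Q}(A_n)$ via the spine and then convert to $\mathbf{P}(A_n)$ using $\mathbf{P}(A_n\cap\{W_m\le t\})\geq t^{-1}\big(\mathbf{Q}(A_n)-\mathbf{Q}(W_m>t)\big)$. The conversion fails because you invoke Theorem~\ref{T:1.3} to control $W_m$ under $\mathbf{Q}$, but that theorem concerns $\mathbf{P}^*$, and $\mathbf{P}$ and $\mathbf{Q}$ are mutually singular on $\mathcal{F}_\infty$. Under $\mathbf{Q}$, the process $1/W_n=\frac{d\mathbf{P}}{d\mathbf{Q}}\big|_{\mathcal{F}_n}$ is a nonnegative $\mathbf{Q}$-martingale, hence converges $\mathbf{Q}$-a.s.\ to a finite limit, so $W_n$ converges $\mathbf{Q}$-a.s.\ to a \emph{strictly positive} (possibly infinite) limit. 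Therefore for $t\asymp n^{-1/\alpha}$ one has $\mathbf{Q}(W_m\le t)\to 0$ and $\mathbf{Q}(W_m>t)\to 1$, and the quantity $\mathbf{Q}(A_n)-\mathbf{Q}(W_m>t)=\mathbf{Q}(W_m\le t)-\mathbf{Q}(A_n^c)$ can only be positive if $\mathbf{Q}(A_n^c)$ is even smaller than the vanishing term $\mathbf{Q}(W_m\le t)$ --- a delicate comparison that your argument never establishes and that is not supplied by the spine estimate alone (the spine gives only $\mathbf{Q}(A_n^c)\le \mathbf{P}(\forall\,m\in[n,\alpha n]:S_m\notin I_n)$, which is close to $1$, not to $0$). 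A secondary problem: you pass from single-generation lower bounds of order $n^{-1-1/\alpha}$ (Lemma~\ref{L:2.7}) to an aggregate lower bound $\mathbf{Q}(A_n)\gtrsim n^{-1/\alpha}$ by ``summing over the $\Theta(n)$ relevant generations,'' but a sum of probabilities is an \emph{upper} bound for a union; a lower bound of this form requires exactly the second-moment control of Cauchy--Schwarz/Paley--Zygmund type, which is what the paper uses and your sketch omits. In short, both the $\mathbf{Q}\to\mathbf{P}$ transfer and the aggregation step in your proposal are gaps, and filling them would essentially reconstruct the paper's second-moment argument.
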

\begin{proof}
The proof is an extension of the case $\alpha=2$ in Aidekon and Shi~\cite[Lemma 6.3]{AS1}. The idea is borrowed from \cite{AI}. We fix $n$ and let
\begin{equation}
a_i=a_i(n)=\left\{
\begin{aligned}
&0,  &0\leq i\leq \frac{\alpha}{4}n; \\
&\frac{1}{\alpha}\log n, \;\;&\;\frac{\alpha}{4}n<i\leq\alpha n,\; \\
\end{aligned}
\right.\nonumber
\end{equation}
and for $n<k\leq\alpha n$,
\begin{equation}
b_i^{k}=b_i^{k}(n)=\left\{
\begin{aligned}
&i^{\frac{\gamma}{2}},  &0\leq i\leq \frac{\alpha}{4}n; \\
&(k\!-\!i)^{\frac{\gamma}{2}}, \;&\frac{\alpha}{4}n<i\leq k,\; \\
\end{aligned}
\right.\nonumber
\end{equation}
where $\gamma=\frac{1}{\alpha(\alpha\!+\!1)}$.
We define
\begin{align}
&Z^{(n)}:=\sum^{\alpha n}_{k\!=\!n+1}Z^{(n)}_k,\nonumber\\
&Z^{(n)}_k:=\sharp(E_k\cap F_k),\nonumber
\end{align}
and
\begin{align}
&E_k:=\big\{y:|y|=k,V(y_i)\geq a_i, 0\leq i\leq k,V(y)\leq\frac{1}{\alpha}\log n+c_{18}\big\},\nonumber\\
&F_k:=\big\{y:|y|=k,\sum_{v\in\Omega(y_{i+1})}(1+(V(v)-a_i)_+)e^{-(V(v)-a_i)}\leq c'e^{-b_i^k}, 0\leq i\leq k\!-\!1\big\},\nonumber
\end{align}
where $c'$ is a positive constant which will be determined later.
By the definition of $Z^{(n)}$, it is sufficient to prove that there exists $c_{34}>0$ such that
\begin{align}\label{E:7.4}
\mathbf{P}(Z^{(n)}>0)\geq c_{34}.
\end{align}
We start with the first and second moments of $Z^{(n)}$. By \textbf{Fact 7.1},  for $n<k\leq\alpha n$,
\begin{align}\label{E:7.1}
\mathbf{E}(Z^{(n)}_k)=\mathbf{E}_Q\left(\frac{Z^{(n)}_k}{W_k}\right)=\mathbf{E}_Q\big(e^{V(\omega_k)}\mathbf{1}_{\{\omega_k\in E_k\cap F_k\}}\big).
\end{align}
Hence,
\begin{align}
\mathbf{E}(Z^{(n)}_k)\geq n^{\frac{1}{\alpha}}\mathbf{Q}(\omega_k\in E_k\cap F_k).\nonumber
\end{align}
From Lemma \ref{L:2.6} and Lemma \ref{L:2.7}, we can find $c_{35},\,c_{36}$ such that
\begin{align}\label{E:7.2}
\mathbf{Q}(\omega_k\in E_k)=\mathbf{P}(S_i\geq a_i, 0\leq i\leq k, S_k\leq\frac{1}{\alpha}\log n+c_{18})\in \bigg[\frac{c_{35}}{n^{1\!+\!\frac{1}{\alpha}}},\frac{c_{36}}{n^{1\!+\!\frac{1}{\alpha}}}\bigg].
\end{align}
And by Lemma \ref{L:7.3}, for $\varepsilon>0$, it is possible to choose $c'$ such that for sufficiently large $n$,
\begin{align}
\max_{k:n<k\leq\alpha n}\mathbf{Q}(\omega_k\in E_k,\omega_k\notin F_k)\leq \frac{\varepsilon}{n^{1\!+\!\frac{1}{\alpha}}}.\nonumber
\end{align}
Here we choose $\varepsilon=c_{35}/2$.  It follows that for  $n<k\leq \alpha n$,
\begin{align}
\mathbf{Q}(\omega_k\in E_k,\omega_k\in F_k)\geq\frac{c_{35}}{2\,n^{1\!+\!\frac{1}{\alpha}}}.\nonumber
\end{align}
Hence,
\begin{align}
\mathbf{E}(Z^{(n)})\geq\sum^{\alpha n}_{k=n\!+\!1}n^{\frac{1}{\alpha}}\frac{c_{35}}{2\,n^{1\!+\!\frac{1}{\alpha}}} \geq c_{37}.\nonumber
\end{align}
We next estimate the second moment of $Z^{(n)}$. By the definition,
\begin{align}
\mathbf{E}\big((Z^{(n)})^2\big)=\sum^{\alpha n}_{k=n\!+\!1}\sum^{\alpha n}_{l=n\!+\!1}\mathbf{E}\big(Z^{(n)}_kZ^{(n)}_l\big)\leq2\sum ^{\alpha n}_{k=n\!+\!1}\sum^{k}_{l=n\!+\!1}\mathbf{E}\big(Z^{(n)}_kZ^{(n)}_l\big).\nonumber
\end{align}
Similarly to \eqref{E:7.1}, for $n<l\leq k\leq\alpha n$, we get
\begin{align}
\mathbf{E}\big(Z^{(n)}_kZ^{(n)}_l\big)\leq e^{c_{18}}n^{\frac{1}{\alpha}}\mathbf{E}_Q(Z^{(n)}_l\mathbf{1}_{\{\omega_k\in E_k\cap F_k\}}).\nonumber
\end{align}
Consequently,
\begin{align}
\mathbf{E}\big((Z^{(n)})^2\big)\leq2 e^{c_{18}}n^{\frac{1}{\alpha}}\sum ^{\alpha n}_{k=n\!+\!1}\sum^{k}_{l=n\!+\!1}\mathbf{E}_Q(Z^{(n)}_l\mathbf{1}_{\{\omega_k\in E_k\cap F_k\}}).\nonumber
\end{align}
To estimate $\mathbf{E}_Q(Z^{(n)}_l\mathbf{1}_{\{\omega_k\in E_k\cap F_k\}})$, we define
$Y^{(n)}_l:=\sum_{|x|=l}\mathbf{1}_{\{x\in E_k\}}$ which is larger than $Z^{(n)}_l$. For $n<l\leq\alpha n$,
\begin{align}
Y^{(n)}_l=\mathbf{1}_{\{\omega_l\in E_l\}}+\sum^l_{i=1}\sum_{y\in\Omega(\omega_i)}Y^{(n)}_l(y),\nonumber
\end{align}
where $Y^{(n)}_l(y):=\sharp\{x:|x|=l,x\geq y,x\in E_l\}$. Conditioning on $\mathcal{G}_\infty:=\sigma\{\omega_j,V(\omega_j),\Omega(\omega_j),\\(V(u))_{u\,\in\,\Omega(\omega_j)},j\geq1\}\nonumber$,
\begin{align}
\mathbf{E}_Q\big(Y^{(n)}_l(y)|\mathcal{G}_\infty\big)=\varphi_{i,l}(V(y)),\nonumber
\end{align}
where $\varphi_{i,l}(r)=\mathbf{E}\Big(\sum_{|x|=l\!-\!i}\mathbf{1}_{\{r+V(x_j)\geq a_{j+i},\,0\leq j\leq l\!-\!i,\,r+V(x)\leq \frac{1}{\alpha}\log n+c_{18}\}}\Big)$.
Therefore,
\begin{align}
\mathbf{E}\big((Z^{(n)})^2\big)&\leq2 e^{c_{18}}n^{\frac{1}{\alpha}}\Bigg(\sum ^{\alpha n}_{k=n\!+\!1}\mathbf{Q}
(\omega_k\in E_k\cap F_k)+
\sum ^{\alpha n}_{k=n\!+\!1}\sum^{k\!-\!1}_{l=n\!+\!1}\mathbf{Q}
(\omega_k\in E_k\cap F_k,\omega_l\in E_l)\nonumber\\&+
\sum ^{\alpha n}_{k=n\!+\!1}\sum^{k}_{l=n\!+\!1}\sum_{i=1}^l\mathbf{E}_Q\bigg(\mathbf{1}_{\{\omega_k\in E_k\cap F_k\}}\sum_{y\in\Omega(\omega_i)}\varphi_{i,l}\big(V(y)\big)\bigg)\Bigg)
\nonumber\\&=:2 e^{c_{18}}n^{\frac{1}{\alpha}}(I_1+I_2+I_3).\nonumber
\end{align}
We claim that there exists $c_{38}>0$ such that
\begin{align}\label{E:7.3}
I_i\leq \frac{c_{38}}{n^\frac{1}{\alpha}},\quad i=1,2,3.
\end{align}
So we have $\mathbf{E}\big((Z^{(n)})^2\big)\leq c_{39}$, which leads to  \eqref{E:7.4} by the fact
\begin{align}
\mathbf{P}(Z^{(n)}>0)\geq \frac{\big(\mathbf{E}(Z^{(n)})\big)^2}{\mathbf{E}\big((Z^{(n)})^2\big)}.\nonumber
\end{align}
Now let us return to \eqref{E:7.3}. The case $i=1$ immediately follows from \eqref{E:7.2}.
We now discuss the case $i=2$. By \textbf{Fact 7.1} and the Markov property,
\begin{align}
\mathbf{Q}(\omega_k\in E_k,\omega_l\in E_l)&=\mathbf{P}(S_i\geq a_i,\, 0\leq i\leq k, S_l\leq\frac{1}{\alpha}\log n+c_{18},S_k\leq\frac{1}{\alpha}\log n\!+\!c_{18})\nonumber\\
&=\mathbf{E}\big(\mathbf{1}_{\{S_i\geq a_i, \,0\leq i\leq l,\,S_l\leq\frac{1}{\alpha}\log n\!+\!c_{18}\}}f(S_l)\big),\label{E:7.5}
\end{align}
where $f(y):=\mathbf{P}(S_i+y\geq\frac{1}{\alpha}\log n,0\leq i\leq k\!-\!l, S_{k\!-\!l}\!+\!y\leq \frac{1}{\alpha}\log n\!+\!c_{18})$. By Lemma \ref{L:2.3}, we obtain
\begin{align}
f(y)\leq \frac{c_{40}(1+y-\frac{1}{\alpha}\log n)}{(k\!-\!l)^{1+\frac{1}{\alpha}}}.\nonumber
\end{align}
Substituting above into \eqref{E:7.5} yields that
\begin{align}
\mathbf{Q}(\omega_k\in E_k,\omega_l\in E_l)&\leq \frac{c_{41}}{(k\!-\!l)^{1+\frac{1}{\alpha}}}\mathbf{P}(S_i\geq a_i,0\leq i\leq l, S_l\leq \frac{1}{\alpha}\log n+C)\nonumber\\
&\leq \frac{c_{42}}{(k\!-\!l)^{1+\frac{1}{\alpha}}} \cdot \frac{1}{l^{1+\frac{1}{\alpha}}},\nonumber
\end{align}
where in the last step we use Lemma \ref{L:2.6}. Now we obtain
\begin{align}
I_2\leq \sum ^{\alpha n}_{k=n\!+\!1}\sum^{k\!-\!1}_{l=n\!+\!1}\frac{c_{42}}{(k\!-\!l)^{1+\frac{1}{\alpha}}} \cdot \frac{1}{l^{1+\frac{1}{\alpha}}}\leq \frac{c_{43}}{n^{\frac{1}{\alpha}}}.\nonumber
\end{align}

It remains to check \eqref{E:7.3} for $i=3$. Recalling the definition of $\varphi_{i,l}(r)$, by \textbf{the many-to-one formula}, we have
\begin{align}
\varphi_{i,l}(r)&=\mathbf{E}\Big(e^{S_{l\!-\!i}}\mathbf{1}_{\{r+S_j\geq a_{j+i},\,0\leq j\leq l\!-\!i,\,r+S_{l\!-\!i}\leq \frac{1}{\alpha}\log n\!+\!c_{18}\}}\Big)\nonumber\\
&\leq e^{c_{18}-r}n^{\frac{1}{\alpha}}\mathbf{P}\big(r+S_j\geq a_{j+i},\,0\leq j\leq l\!-\!i,\,r+S_{l\!-\!i}\leq \frac{1}{\alpha}\log n\!+\!c_{18}\big).\nonumber
\end{align}
On the one hand, when $i\leq\frac{\alpha n}{4}$, we only need consider $r\geq0$. By Lemma \ref{L:2.6}, we have
\begin{align}
\varphi_{i,l}(r)\leq n^{\frac{1}{\alpha}}e^{c_{44}-r}\frac{1+r}{n^{1\!+\!\frac{1}{\alpha}}}\leq \frac{c_{45}}{n}e^{\!-\!r}(r\!+\!1).\nonumber
\end{align}
If we write $\mathbf{E}_Q[k,i,l]=\mathbf{E}_Q\big(\mathbf{1}_{\{\omega_k\in E_k\cap F_k\}}\sum_{y\in\Omega(\omega_i)}\varphi_{i,l}\big(V(y)\big)\big)$, then
\begin{align}
\mathbf{E}_Q[k,i,l]\leq \frac{c_{45}}{n}\mathbf{E}_Q\Big(\mathbf{1}_{\{\omega_k\in E_k\cap F_k\}}\sum_{y\in\Omega(\omega_i)}e^{-V(y)}(V(y)_+\!+\!1)\Big).\nonumber
\end{align}
Recalling the definition of $F_k$, we obtain
\begin{align}
\mathbf{E}_Q[k,i,l]\leq \frac{c_{45}c'}{n}e^{\!-\!(i\!-\!1)^{\frac{\gamma}{2}}}\mathbf{Q}(\omega_k\in E_k)\leq c_{46}\frac{e^{\!-\!(i\!-\!1)^{\frac{\gamma}{2}}}}{n^{2+\frac{1}{\alpha}}}.\nonumber
\end{align}
Therefore,
\begin{align}
\sum ^{\alpha n}_{k=n\!+\!1}\sum^{k}_{l=n\!+\!1}\sum_{1\leq i\leq \frac{\alpha n}{4}} \mathbf{E}_Q[k,i,l]
\leq \sum ^{\alpha n}_{k=n\!+\!1}\sum^{k}_{l=n\!+\!1} \sum_{1\leq i\leq \frac{\alpha n}{4}}c_{46}\frac{e^{\!-\!(i\!-\!1)^{\frac{\gamma}{2}}}}{n^{2+\frac{1}{\alpha}}}\leq \frac{c_{47}}{n^{\frac{1}{\alpha}}}.\nonumber
\end{align}
On the other hand, when $\frac{\alpha n}{4}<i\leq l$, we only need consider $r\geq \frac{1}{\alpha}\log n$. Then by Lemma \ref{L:2.3}, we have
\begin{align}
\varphi_{i,l}(r)\leq c_{48}n^{\frac{1}{\alpha}}e^{-r}\frac{\;(1+r-\frac{1}{\alpha}\log n)}{(l-i+1)^{1+\frac{1}{\alpha}}}.\nonumber
\end{align}
Similarly, we obtain
\begin{align}
\mathbf{E}_Q[k,i,l]\leq \frac{c_{49}e^{-(k-i+1)^{\frac{\gamma}{2}}}}{n^{1+\frac{1}{\alpha}}(l-i+1)^{1+\frac{1}{\alpha}}}.\nonumber
\end{align}
As a consequence,
\begin{align}
\sum ^{\alpha n}_{k=n\!+\!1}\sum^{k}_{l=n\!+\!1}\sum_{\frac{\alpha n}{4}< i\leq l} \mathbf{E}_Q[k,i,l]
\leq \sum ^{\alpha n}_{k=n\!+\!1}\sum^{k}_{l=n\!+\!1} \sum_{1\leq i\leq \frac{\alpha n}{4}}\frac{c_{49}e^{-(k-i+1)^{\frac{\gamma}{2}}}}{n^{1+\frac{1}{\alpha}}(l-i+1)^{1+\frac{1}{\alpha}}}\leq \frac{c_{50}}{n^{\frac{1}{\alpha}}}.\nonumber
\end{align}
This completes the proof of \eqref{E:7.3}, and then the lemma is now proved.\qed
\end{proof}
\qed

Let
\begin{equation}
p_i=2b_i^k+a=\left\{
\begin{aligned}
&2i^{\frac{\gamma}{2}},  &0\leq i\leq \frac{\alpha}{4}n; \\
&2(n-i)^{\frac{\gamma}{2}}+a, &\frac{\alpha}{4}n<i\leq\alpha n, \\
\end{aligned}
\right.\nonumber
\end{equation}
where $\gamma=\frac{1}{\alpha(\alpha+1)}$ as before.

{\begin{lemma} \label{L:7.0}
For $\varepsilon>0$, there exists $d>0$ such that for any $u\geq0, a\geq0$ and any integer $n\geq1$,
\begin{align}\label{E:7.01}
\mathbf{P}(\exists\, 0\leq i\leq n: S_i\leq p_i-d, \min_{j\leq n}S_j\geq0, \min_{\frac{\alpha n}{4}<j\leq n}S_j\geq a, S_n\leq a+u)\leq \varepsilon\cdot\frac{(1+u)^\alpha}{n^{1+\frac{1}{\alpha}}}.
\end{align}
\end{lemma}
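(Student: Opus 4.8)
The plan is to decompose the event according to the first time $i$ at which the spine drops below the level $p_i - d$, and then to use the Markov property at that time together with the ladder-height estimates from Section 2 to sum over $i$. Write $\tau := \inf\{i \ge 0 : S_i \le p_i - d\}$; on the event in \eqref{E:7.01} we have $\tau \le n$. I would split according to whether $\tau \le \frac{\alpha n}{4}$ or $\tau > \frac{\alpha n}{4}$, since the profile $p_i$ has two qualitatively different regimes (growing like $2i^{\gamma/2}$ from the left end, and decaying like $2(n-i)^{\gamma/2}+a$ near the right end). In either regime, conditioning on $S_\tau = y$ with $y \le p_\tau - d$ and $y \ge 0$ (the latter because $\underline{S}_\tau \ge 0$ on the event), the remaining path from time $\tau$ to time $n$ must stay above $0$ (and above $a$ after $\frac{\alpha n}{4}$) and land in $[\,\cdot\,, a+u]$; this is exactly the kind of event controlled by Lemma \ref{L:2.3} (or Lemma \ref{L:2.2} when finer control is needed), giving a bound of order $(1+y+a)(1+a+u-y)^{\alpha}/(n-\tau)^{1+1/\alpha}$ or similar. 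Meanwhile the probability that the path up to time $\tau$ stays nonnegative and reaches the low level $y \le p_\tau - d$ near time $\tau$ is controlled by Lemma \ref{L:2.2}, producing a factor roughly $(1+p_\tau)(1+p_\tau)^{\alpha-1}/\tau^{1+1/\alpha}$, times the exponentially small ``cost'' coming from the gap $d$ between $S_\tau$ and the barrier $p_i$.

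The key point that makes the small constant $\varepsilon$ appear is the following: at the crossing time the spine is forced to be at least $d$ below the profile $p_i$, and since $p_i \asymp (\min(i, n-i))^{\gamma/2}$ (plus $a$ on the right), the contribution of the crossing time $i$ comes with a factor that decays in $d$, uniformly in $i$ and $n$. More precisely, after applying the Markov property at time $i$ and Lemma \ref{L:2.3} to each of the two pieces, I expect to obtain a bound of the shape
\begin{align}
\sum_{i=0}^{n} \frac{(1+p_i)^{\alpha}}{i^{1+\frac{1}{\alpha}}} \cdot \frac{(1+a)(1+u+ p_i - S_i)^{\alpha}}{(n-i)^{1+\frac{1}{\alpha}}} \, \mathbf{1}_{\{S_i \le p_i - d\}}, \nonumber
\end{align}
and then summing over the possible value of $p_i - S_i \ge d$ (which, for a random walk in the domain of attraction of an $\alpha$-stable law, has tail decaying like a power) yields a factor that is small when $d$ is large — this is where $d = d(\varepsilon)$ enters. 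Combining with the elementary bounds $p_i \le 2i^{\gamma/2}$ for $i \le \frac{\alpha n}{4}$ and $p_i \le 2n^{\gamma/2} + a$ for $i > \frac{\alpha n}{4}$, together with $n^{\gamma/2} = o(n^{1/\alpha})$ since $\gamma/2 < 1/\alpha$, the $p_i$-dependent factors get absorbed and the sums over $i$ of $i^{-1-1/\alpha}$ and $(n-i)^{-1-1/\alpha}$ converge, leaving exactly $\varepsilon (1+u)^{\alpha}/n^{1+1/\alpha}$ after choosing $d$ large.

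The main obstacle, I expect, is handling the region where $\tau$ is close to $n$ and $a$ is large: there $p_i - d$ can itself be comparable to $a$, so the crossing level $y$ need not be small, and one must be careful that the factor $(1+a)$ on the right-hand side of \eqref{E:7.01} is not over-counted. The resolution is to apply the time-reversal trick (replacing $S$ by $\widetilde S_j = S_{n-j} - S_n$ as in the proof of Lemma \ref{L:2.2}) so that the constraint $S_n \le a+u$ becomes an initial condition and the barrier near the right end of $[0,n]$ becomes a left-end barrier of height $\asymp (n-i)^{\gamma/2}$ for the reversed walk, for which Lemma \ref{L:2.10} (the analogue of Lemmas \ref{L:2.2} and \ref{L:2.4} for $\widetilde S$) applies directly. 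A secondary technical nuisance is uniformity in $u$ and $a$: all the ladder-height estimates \eqref{E:22}, \eqref{E:22b} and Lemmas \ref{L:2.2}--\ref{L:2.3} are already uniform in their spatial arguments, so this should go through, but the bookkeeping of the polynomial factors $(1+p_i)^{\alpha}$ versus the gain from $d$ must be done carefully to ensure the final constant is genuinely $\varepsilon$ and not merely $O(1)$.
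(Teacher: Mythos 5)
Your overall skeleton — union bound over the first crossing time, Markov property at that time, Lemmas~\ref{L:2.2}--\ref{L:2.6} for each piece, and a split at $i \approx \frac{\alpha n}{4}$ — matches the paper's strategy. But the mechanism you give for producing the small constant $\varepsilon$ is wrong, and this is the whole content of the lemma. You write that ``the contribution of the crossing time $i$ comes with a factor that decays in $d$, uniformly in $i$ and $n$,'' and that ``summing over the possible value of $p_i - S_i \ge d$ \ldots yields a factor that is small when $d$ is large.'' That per-term decay is \emph{not} uniform in $i$: Lemma~\ref{L:2.3} gives $\mathbf{P}(S_i \le p_i - d,\, \underline{S}_i \ge 0) \lesssim (1 + (p_i - d)_+)^{\alpha+1}/i^{1+1/\alpha}$, and the ratio $((1+p_i-d)/(1+p_i))^{\alpha+1}$ tends to $1$ as $p_i\to\infty$; so for the ``middle'' values of $i$ (where $p_i \asymp (\min(i,n-i))^{\gamma/2}$ is large) the subtraction of $d$ buys you essentially nothing. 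The paper's actual two-step logic is: (a) the series $\sum_i (1 + 2 i^{\gamma/2})^{\alpha+1}/i^{1+1/\alpha}$ converges because $\frac{\gamma}{2}(\alpha+1) - 1 - \frac{1}{\alpha} < -1$ (note this is a stronger condition than your $\frac{\gamma}{2} < \frac{1}{\alpha}$; the latter only controls a single factor $n^{\gamma/2}/n^{1/\alpha}$, not the summability), so one can pick $K$ large enough that the tail sums over $i \in [K, \frac{\alpha n}{4}]$ and $i \in (\frac{\alpha n}{4}, n-K]$ are each $< \varepsilon (1+u)^{\alpha}/n^{1+1/\alpha}$; and then (b) choose $d$ so large that $\mathbf{P}(E_i) = 0$ for every $i \in [0,K] \cup [n-K, n]$, using that on $E_i$ one has $S_i \ge 0$ (resp.\ $S_i \ge a$ for $i > \frac{\alpha n}{4}$) while $S_i \le p_i - d$, which is impossible once $\min(i,n-i)^{\gamma/2} < d/2$. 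In other words, $d$ kills the head of the sum outright; the tail is small by convergence; and the threshold $K$ must be fixed \emph{before} $d$ because $d$ depends on $K$. Your proposal collapses these two steps into a single ``factor small in $d$, uniformly in $i$,'' which is not available.

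A secondary point: you reach for time reversal plus Lemma~\ref{L:2.10} to handle $i$ close to $n$ and worry about a spurious $(1+a)$ factor. The paper avoids both issues more directly: for $\frac{\alpha n}{4} < i < n$ it applies Lemma~\ref{L:2.3} to the terminal piece to get a bound of the form $c_{10}(1+y-a)(1+u)^{\alpha}/(n-i)^{1+1/\alpha}$ with the \emph{shifted} factor $(1+y-a)$, so that $a$ cancels; and then it applies Lemma~\ref{L:2.6} (not a time-reversed analogue) to $\mathbf{E}\big((1+S_i-a)\mathbf{1}_{\{\cdots\}}\big)$ to obtain $(1+(n-i)^{\gamma/2})^{\alpha+1}/n^{1+1/\alpha}$, again with no extraneous $(1+a)$. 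Your time-reversal route is not obviously wrong, but it is a detour, and the intermediate expression you write down (which has the random variable $S_i$ appearing in what should already be a deterministic upper bound, together with a stray $(1+a)$ in the numerator) is not a valid step as stated. I would concentrate on repairing point (a)/(b) above, since that is where the statement actually lives.
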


\begin{proof}
Denoting the left side of the inequality by $\mathbf{P}(E)$, then we have
\begin{align}
\mathbf{P}(E)\leq \sum_{i=0}^n\mathbf{P}(E_i), \nonumber
\end{align}
where $E_i:=\{S_i\leq p_i-d, \min_{j\leq n}S_i\geq0, \min_{\frac{\alpha n}{4}<j\leq n}S_j\geq a, S_n\leq a+u\}$.

We first deal with the case $i\leq \frac{\alpha n}{4}$. Now $p_i=2\,i^{\frac{\gamma}{2}}$. By the Markov property,
\begin{align}
\mathbf{P}(E_i)\leq\mathbf{P}\big(\mathbf{1}_{\{S_i\leq\,2\,i^{\frac{\gamma}{2}},\;\underline{S}\,_i\geq0\}}f_1(S_i)\big),\nonumber
\end{align}
where $f_1(y)=\mathbf{P}_y(\underline{S}_{n-i}\geq0, \;\min_{\frac{\alpha n}{4}-i<j\leq n-i}S_j\geq a, S_{n-i}\leq a+u)\leq c_{51}\frac{(1+y)(1+u)^\alpha}{n^{1+\frac{1}{\alpha}}}$ (by Lemma \ref{L:2.6}). Therefore,
\begin{align}
\mathbf{P}(E_i)&\leq c_{51}\cdot\frac{(1+u)^\alpha}{n^{1+\frac{1}{\alpha}}}\mathbf{E}\big((1+S_i)\mathbf{1}_{\{S_i\leq2\,i^\frac{\gamma}{2},
\;\underline{S}\,_i\geq0\}}\big)\nonumber\\&\leq c_{51}c_{10}\cdot\frac{(1+u)^\alpha}{n^{1+\frac{1}{\alpha}}}\cdot\frac{(1+2\,i^\frac{\gamma}{2})^{\alpha+1}}{i^{1+\frac{1}{\alpha}}},\nonumber
\end{align}
which is from Lemma \ref{L:2.3}. Observing that $\frac{\gamma}{2}(\alpha+1)-1-\frac{1}{\alpha}<-1$, we can choose a constant $K_1$ such that for $k\geq K_1$,
\begin{align}
\sum_{i=k}^{\frac{\alpha n}{4}}\mathbf{P}(E_i)\leq \frac{(1+u)^\alpha}{n^{1+\frac{1}{\alpha}}} \varepsilon.\nonumber
\end{align}

Second, we treat the case $\frac{\alpha n}{4}<i<n$. By the Markov property at time $i$, we have
\begin{align}
\mathbf{P}(E_i)=\mathbf{E}\big(\mathbf{1}_{\{S_i\leq2(n-i)^{\frac{\gamma}{2}}+a, \min_{j\leq i}S_i\geq0,\min_{\frac{\alpha n}{4}<j\leq i}S_j\geq a\}}f_2(S_i)\big),\nonumber
\end{align}
where $f_2(y)=\mathbf{P}_y(\underline{S}_{n-i}\geq a,S_{n-i}\leq a+u)\leq c_{10}
\frac{(1+y-a)(1+u)^\alpha}{(n-i)^{1+\frac{1}{\alpha}}}$ (by Lemma \ref{L:2.3}). Then from Lemma \ref{L:2.6}, we have
\begin{align}
\mathbf{P}(E_i)&\leq c_{10}\cdot\frac{(1+u)^\alpha}{(n-i)^{1+\frac{1}{\alpha}}}\mathbf{E}\Big((1+S_i-a)
\mathbf{1}_{\{S_i\leq2(n-i)^{\frac{\gamma}{2}}+a, \min_{j\leq i}S_i\geq0,\min_{\frac{\alpha n}{4}<j\leq i}S_j\geq a\}}\Big)\nonumber\\&\leq c_{52}\cdot\frac{(1+u)^\alpha}{(n-i)^{1+\frac{1}{\alpha}}}
\frac{(1+(n-i)^{\frac{\gamma}{2}})^{\alpha+1}}{n^{1+\frac{1}{\alpha}}}\nonumber\\&\leq c_{52}\cdot\frac{(1+u)^\alpha}{n^{1+\frac{1}{\alpha}}}\cdot
\frac{(1+(n-i)^{\frac{\gamma}{2}})^{\alpha+1}}{(n-i)^{1+\frac{1}{\alpha}}}.\nonumber
\end{align}
Therefore, we can find $K_2$ such that when $k\geq K_2$,
\begin{align}
\sum_{i=\frac{\alpha n}{4}}^{n-k}\mathbf{P}(E_i)\leq \frac{(1+u)^\alpha}{n^{1+\frac{1}{\alpha}}}\varepsilon.\nonumber
\end{align}
Notice that our choice of $K(:=\max(K_1,K_2))$ does not depend on the constant $d$. Thus, we are allowed to choose $d\geq 2\,K^\frac{\gamma}{2}+a$ such that $\mathbf{P}(E_i)=0$ if $i\in[0,K]\cup[n-K,n].$
\qed
\end{proof}

\begin{lemma}\label{L:7.3}
For $\varepsilon>0$, it is possible to choose $c'$ such that for all large $n$,
\begin{align}
\max_{k:n<k\leq\alpha n}\mathbf{Q}\,(\omega_k\in E_k,\omega_k\notin F_k)\leq \frac{\varepsilon}{n^{1\!+\!\frac{1}{\alpha}}}.\nonumber
\end{align}
$E_k, F_k$ and $c'$ are defined as before.
\end{lemma}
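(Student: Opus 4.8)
The approach follows the case $\alpha=2$ of Aidekon and Shi \cite{AS1}, with the stable random-walk estimates of Section 2 replacing their Gaussian inputs. By \textbf{Fact 7.1}, under $\mathbf{Q}$ the spine $(V(\omega_j))_{j\ge0}$ is distributed as $(S_j)$, and, conditionally on the spine, the displacements of the brothers $\Omega(\omega_{i+1})$ relative to $V(\omega_i)$ form a size-biased copy of the generic point process, with one point marked as the spine child. Writing $\Sigma_i:=\sum_{v\in\Omega(\omega_{i+1})}\big(1+(V(v)-a_i)_+\big)e^{-(V(v)-a_i)}$ we have $\{\omega_k\notin F_k\}=\bigcup_{i=0}^{k-1}\{\Sigma_i>c'e^{-b_i^k}\}$, so a union bound reduces the statement to showing that $\sum_{i=0}^{k-1}\mathbf{Q}(\omega_k\in E_k,\,\Sigma_i>c'e^{-b_i^k})\le\varepsilon\,n^{-(1+1/\alpha)}$, uniformly in $n<k\le\alpha n$.

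First, using \textbf{Fact 7.1} to identify the $\mathbf{Q}$-law of the spine with the $\mathbf{P}$-law of $(S_j)$, and noting that on $\{\omega_k\in E_k\}$ the spine stays above the floors $a_j$ for $0\le j\le k$ and satisfies $V(\omega_k)\le\frac1\alpha\log n+c_{18}$, Lemma \ref{L:7.0} bounds the contribution of $\{\,\exists\,i\le k-1:\,V(\omega_i)\le p_i-d\,\}$ by $\varepsilon\,n^{-(1+1/\alpha)}/2$, provided $d$ is fixed large enough. On the complementary event $V(\omega_i)-a_i\ge 2b_i^k-d$, so $e^{-(V(\omega_i)-a_i)}$ is at most a $d$-dependent constant times $e^{-2b_i^k}$, and hence $\Sigma_i$ is, conditionally on the spine, typically of order $e^{-2b_i^k}$ times a moment of the point process — much smaller than the threshold $c'e^{-b_i^k}$. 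Because only the logarithmic moment condition \eqref{stable3} is at hand (so $\mathbf{E}(X^2)$ may be infinite), the conditional tail $\mathbf{Q}(\Sigma_i>c'e^{-b_i^k}\mid\text{spine})$ has to be estimated by a \emph{truncated} first-moment bound: one splits according to whether $X$ associated to the point process at level $i+1$ is large, so that on the truncated part a Markov bound costs only $\mathbf{E}(X\mathbf{1}_{\{\cdot\}})$ and $\mathbf{E}(\widetilde X\mathbf{1}_{\{\cdot\}})$, while the untruncated part sums in $i$ by \eqref{stable3} and Lemma \ref{L:4.2}. Multiplying by the spine-trajectory estimates of Lemmas \ref{L:2.3}, \ref{L:2.4} and \ref{L:2.6} (for the pieces $0\to i$ and $i+1\to k$) bounds each summand by a spine-path probability of order $n^{-(1+1/\alpha)}$ times a factor that tends to $0$ as $c'\to\infty$ and whose sum over $i\le k-1$ is bounded uniformly in $n,k$; here the choice $p_i=2b_i^k+a$ is precisely what leaves a spare $e^{-b_i^k}$ to beat $c'e^{-b_i^k}$, and $\gamma=\frac1{\alpha(\alpha+1)}$ makes the resulting series converge. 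Summing over $i$, treating the finitely many boundary levels (where $b_i^k$ stays bounded, so $c'e^{-b_i^k}$ is bounded below and the tail is directly small once $c'$ is large) separately, and fixing first $d$ and then $c'$, completes the argument.

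The main obstacle is this final multi-scale summation: one has to make the brother-sum tail small \emph{uniformly in the spine position} and summable over the $O(n)$ levels $i$ (and $O(n)$ choices of $k$) against the weight $n^{-(1+1/\alpha)}$, with nothing stronger available than the logarithmic moment bound \eqref{stable3}. This is the stable analogue of — but more delicate than — the estimates on $A_1$ and $A_2$ in the proof of Theorem \ref{T:1.1}, and it is where the barrier $p_i$, the exponent $\gamma$, and the order in which $d$ and $c'$ are fixed all come into play.
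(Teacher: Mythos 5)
Your outline captures the correct skeleton: union bound over $i$, then split according to whether the spine position falls below the barrier $p_i = 2b_i^k + a_i$, with Lemma~\ref{L:7.0} disposing of the bad-barrier piece. But the remaining estimate — the sum over $i$ of $\mathbf{Q}\big(\xi(\omega_{i+1}) > c'\,e^{(V(\omega_i)-a_i)/3},\,\omega_k\in E_k\big)$, after the paper's factorisation $1+a_+\le(1+b_+)(1+(a-b)_+)$ reduces the brother sum to its spine-relative version $\xi(\omega_{i+1})$ — does not go through using only Lemmas~\ref{L:2.3}, \ref{L:2.4} and \ref{L:2.6}. That sum has to be split into the early range $i+1\le\tfrac{\alpha n}{4}$ (barrier at $0$, endpoint at height $\asymp\frac1\alpha\log n$) and the late range $\tfrac{\alpha n}{4}< i< k$ (barrier at $\frac1\alpha\log n$, endpoint near the barrier), and the two halves need different tools.

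For the early half, the paper's mechanism is not really a ``truncated first-moment bound'': rather, the event $\{\xi(\omega_{i+1})>c'e^{V(\omega_i)/3}\}$ forces the spine to sit at height at most $3(\ln\xi-\ln c')$, and then Lemma~\ref{L:2.4} (a renewal-type sum of $\mathbf{P}_z(S_l\le x,\underline{S}_l\ge 0)$) is applied pathwise and integrated against the moment condition \eqref{stable3} via Lemma~\ref{L:4.2}. For the late half this device fails in the forward direction because the walk is near its \emph{terminal} barrier, not near $0$; the paper reverses time, replaces $S_i$ by $\hat S_k-\hat S_{k-i}$, and works with the dual walk $-\hat S$. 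There the renewal function scales as $K(x)\asymp x^{\alpha-1}$ rather than $R(x)\asymp x$, so the correct local and renewal estimates are Lemma~\ref{L:2.6b} and Lemma~\ref{L:2.10}(ii) — neither of which you cite. If one instead tried to push the forward-walk Lemmas~\ref{L:2.4}/\ref{L:2.6} through for the late indices, the powers of the ``height'' variables would come out wrong and the series over $i$ would not close. This time-reversal step is the genuinely new ingredient that your sketch is missing.
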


\begin{proof}
By the definition of $F_k$, we can write
\begin{align}
&\mathbf{Q}\,(\omega_k\in E_k,\omega_k\notin F_k)\nonumber\\=&\mathbf{Q}\,(\omega_k\in E_k, \exists\,0\leq i\leq k-1,
\sum_{\nu\in\Omega(\omega_{i+1})}\big(1+(V(\nu)-a_i)_+\big)e^{-(V(\nu)-a_i)}>c'e^{-b_i^k})\nonumber\\
\leq &\mathbf{Q}\,\big(\omega_k\in E_k, \exists\,0\leq i\leq k-1, V(\omega_i)\leq 2b_i^k+a_i-d\big)\nonumber\\&+
\sum_{i=0}^{k-1}\mathbf{Q}\,\bigg(\omega_k\in E_k, \sum_{\nu\in\Omega(\omega_{i+1})}
\big(1+(V(\nu)-a_i)_+\big)e^{-(V(\nu)-a_i)}>c'e^{-(V(\omega_i)-a_i+d)/2}\bigg).\nonumber
\end{align}
By Lemma \ref{L:7.0}, for any $k\in(n,\alpha n]$,
\begin{align}
\mathbf{Q}\,\big(\omega_k\in E_k, \exists\,0\leq i\leq k-1, V(\omega_i)\leq 2b_i^k+a_i-d\big)\leq\frac{\varepsilon}{n^{1+\frac{1}{\alpha}}}.\nonumber
\end{align}
We see that $1+a_+\leq1+b_++(a-b)_+\leq(1+b_+)(1+(a-b)_+)$. Thus,
\begin{align}
&\sum_{\nu\in\Omega(\omega_{i+1})}\big(1+(V(\nu)-a_i)_+\big)e^{-(V(\nu)-a_i)}\nonumber\\
\leq &\big(1+(V(\omega_i)-a_i)_+\big)e^{-(V(\omega_i)-a_i)}\sum_{\nu\in\Omega(\omega_{i+1})}
\big(1+(V(\nu)-V(\omega_i))_+\big)e^{-(V(\nu)-V(\omega_i))}.\nonumber
\end{align}
For convenience, we let $\xi(\omega_{i+1}):=\sum_{\nu\in\Omega(\omega_{i+1})}
\big(1+(V(\nu)-V(\omega_i))_+\big)e^{-(V(\nu)-V(\omega_i))}$. Then we only need to prove that, for $c'$ large enough,
\begin{align}
\sum_{i=0}^{k-1}\mathbf{Q}\,\Big(\xi(\omega_{i+1})>c'\cdot\frac{e^{(V(\omega_i)-a_i)/2}}{1+(V(\omega_i)-a_i)_+},\omega_k\in E_k\Big)\leq \frac{\varepsilon}{n^{1+\frac{1}{\alpha}}},  \;\;\forall\;k\in(n,\alpha n].\nonumber
\end{align}
Actually, it is enough to show that
\begin{align}
\sum_{i=0}^{k-1}\mathbf{Q}\, \Big(\xi(\omega_{i+1})>c'\cdot e^{(V(\omega_i)-a_i)/3},\omega_k\in E_k\Big)\leq \frac{\varepsilon}{n^{1+\frac{1}{\alpha}}},  \;\;\forall\;k\in(n,\alpha n].\nonumber
\end{align}
First, we deal with the case $i+1\leq \frac{\alpha n}{4}$. We notice that
\begin{align}
\mathbf{Q}\,\big(\xi(\omega_{i+1})>c'\cdot e^{(V(\omega_i)-a_i)/3},\omega_k\in E_k\big)\leq \mathbf{Q}\,\big(\xi(\omega_{i+1})>c'e^{V(\omega_i)/3},\omega_k\in E_k\big).\nonumber
\end{align}
By the Markov property at time $i+1$, we get the right side of above is same as
\begin{align}
\mathbf{Q}\,\big(f(V(\omega_{i+1}))\mathbf{1}_{\{\xi(\omega_{i+1})>c'e^{V(\omega_i)/3},V(\omega_j)\geq0,\, j\leq i+1\}}\big),\nonumber
\end{align}
where $f(r)=\mathbf{P}_r(V(\omega_j)\geq a_{j+i+1}, 0\leq j\leq k-i-1, V(\omega_{k-i-1})\leq \frac{1}{\alpha}\log n+c_{18})$. By Lemma \ref{L:2.6}, $f(r)\leq c_{53}\cdot\frac{(1+r)}{n^{1+\frac{1}{\alpha}}}$ when $r\geq0$. This yields that
\begin{align}\label{E:7.03}
\mathbf{Q}(\xi(\omega_{i+1})>c'\cdot e^{V(\omega_i)/3},\omega_k\in E_k)\leq \frac{c_{53}}{n^{1+\frac{1}{\alpha}}}
\mathbf{E}_Q((1+V(\omega_{i+1})_+)\mathbf{1}_{\{\xi(\omega_{i+1})>c'e^{V(\omega_j)/3},V(\omega_j)\geq0,\, j\leq i+1\}}).
\end{align}
On the one hand, we have
\begin{align}
1+V(\omega_{i+1})_+\leq 1+V(\omega_i)_++(V(\omega_{i+1})-V(\omega_i))_+.\nonumber
\end{align}
By the Markov property at time $i$, we obtain that
\begin{align}
\mathbf{E}_Q\big((1+&V(\omega_{i+1})_+)\mathbf{1}_{\{\xi(\omega_{i+1})>c'e^{V(\omega_i)/3},V(\omega_j)\geq0, j\leq i+1)\}}\big)\nonumber\\ &\leq\mathbf{E}_Q(\mathbf{1}_{\{V(\omega_j)\geq0, j\leq i\}}\kappa(V(\omega_i))),\nonumber
\end{align}
where for $x\geq0$, $\kappa(x):=\mathbf{1}_{\{\xi>c'e^{x/3}\}}(1+x+\delta_+)$. $(\xi,\delta)$ is the identical and independent copy of $(\xi(\omega_1),V(\omega_1))$ under $\mathbf{Q}$, and independent of the other random variables. In view of
\eqref{E:7.03}, it follows that
\begin{align}
&\sum_{i=0}^{\frac{\alpha n}{4}-1}\mathbf{Q}(\xi(\omega_{i+1})>c'e^{V(\omega_i)/3},\omega_k\in E_k)\\ \nonumber
&\;\;\leq \frac{c_{53}}{n^{1+\frac{1}{\alpha}}}\sum_{i=0}^\infty\mathbf{E}_Q\Big(\big(1+V(\omega_i)_++\delta_+\big)\mathbf{1}_{\{\min_{j\leq i} V(\omega_j)\geq0,V(\omega_i)\leq 3(\ln\xi-\ln c')\}}\Big).\nonumber
\end{align}
Notice that the term inside the expectation is $0$ if $c'>\xi$. Therefore, by Lemma \ref{L:2.4}, we get that
\begin{align}
&\sum_{i=0}^{\frac{\alpha n}{4}-1}\mathbf{Q}\big(\xi(\omega_{i+1})>c'e^{V(\omega_i)/3},\omega_k\in E_k\big)\\ \nonumber
&\leq \frac{c_{54}}{n^{1+\frac{1}{\alpha}}} \mathbf{E}_Q\big((1+\ln\xi_++\delta_+)(1+\ln\xi_+)^{\alpha-1}\mathbf{1}_{\{c'\leq\xi\}}\big).\nonumber
\end{align}
Observe that $\xi\leq X+\tilde{X}$, going back to the measure $\mathbf{P}$, we get
\begin{align}
\sum_{i=0}^{\frac{\alpha n}{4}-1}\mathbf{Q}(\xi&(\omega_{i+1})>c'e^{V(\omega_i)/3},\omega_k\in E_k) \leq \frac{c_{54}}{n^{1+\frac{1}{\alpha}}}  \bigg(\mathbf{E}\Big(X\mathbf{1}_{\{c'\leq X+\tilde{X}\}}\big(1+\ln_+(X+\tilde{X})\big)^\alpha\Big)\\ &+\mathbf{E}\Big(\tilde{X}\mathbf{1}_{\{c'\leq X+\tilde{X}\}}\big(1+\ln_+(X+\tilde{X})\big)^{\alpha-1}\Big)\bigg)\leq\frac{\varepsilon}{n^{1+\frac{1}{\alpha}}}\nonumber
\end{align}
for $c'$ large enough since $\mathbf{E}(X(1+\ln_+(X+\tilde{X}))^\alpha)+\mathbf{E}(\tilde{X}(1+\ln_+(X+\tilde{X}))^{\alpha-1})
<\infty$.
It remains to treat the case $i+1>\frac{\alpha n}{4}$. We want to show that for $c'$ large enough,
\begin{align}
\sum_{i=\frac{\alpha n}{4}}^{k-1}\mathbf{Q}(\xi(\omega_{i+1})>c'\cdot e^{(V(\omega_i)-a_i)/3},\omega_k\in E_k)\leq \frac{\varepsilon}{n^{1+\frac{1}{\alpha}}}\label{E:7.02}.
\end{align}
We deduce that
\begin{align}
\mathbf{Q}(\xi(\omega_{i+1})>c'\cdot e^{(V(\omega_i)-a_i)/3},\omega_k\in E_k)=\mathbf{Q}(\bar{\xi}(V(\omega_{i+1})-V(\omega_i))>c'\cdot e^{(V(\omega_i)-a_i)/3},\omega_k\in E_k),\nonumber
\end{align}
where, given $(V(\omega_i),i\leq k)$, the random variable $\bar{\xi}(V(\omega_{i+1})-V(\omega_i))\in\mathbb{R}$ has the distribution of $\int_{x\in\mathbb{R}}(1+x_+)e^{-x}\mu(dx)$ under $\mathbf{Q}_{V(\omega_{i+1})-V(\omega_i)}(d\mu)$. The last equation is same as
\begin{align}\label{E:7.05}
\mathbf{P}(\bar{\xi}(S_{i+1}-S_i)>c'\cdot e^{(S_i-a_i)/3},\underline{S}_k\geq0,\min_{\frac{\alpha n}{4}<j\leq k}S_j\geq\frac{1}{\alpha}\log n,S_k\leq \frac{1}{\alpha}\log n+c_{18}).
\end{align}
The random variable $\bar{\xi}(S_{i+1}-S_i)$ has the distribution of $\int_{x\in\mathbb{R}}(1+x_+)e^{-x}\mu(dx)$ under $\mathbf{P}_{S_{i+1}-S_i}(d\mu)$. We return time, that is, we replace $S_i$ by $\hat{S}_k-\hat{S}_{k-i}$. $\{\hat{S}_i\}$  is a random walk identically distributed with $\{S_i\}$. Then  \eqref{E:7.05} changes into
\begin{align}
&\mathbf{P}(\bar{\xi}(\hat{S}_{k-i}-\hat{S}_{k-i-1})>c'e^{(\hat{S}_k-\hat{S}_{k-i}-a_i)/3},{\underline{-\hat{S}}}_k\geq -\hat{S}_k, \nonumber \\&\;\;\;\;\;\;\;\;\;\;\;\min_{0\leq j<k-\frac{\alpha n}{4}}(-\hat{S}_j)\geq
\frac{1}{\alpha}\log n-\hat{S}_k, \frac{1}{\alpha}\log n\leq \hat{S}_k\leq \frac{1}{\alpha}\log n+c_{18})\label{E:7.07}\\ =&\mathbf{P}\bigg(\bar{\xi}(\hat{S}_{k-i}-\hat{S}_{k-i-1})>c'e^{-\hat{S}_{k-i}/3},{\underline{-\hat{S}}}_k\geq-\frac{1}{\alpha}\log n -c_{18},\nonumber\\&\;\;\;\;\;\;\;\;\;\;\; \min_{0\leq j<k-\frac{\alpha n}{4}}(-\hat{S}_j)\geq -c_{18}, \frac{1}{\alpha}\log n\leq \hat{S}_k\leq \frac{1}{\alpha}\log n+c_{18}\bigg).\nonumber
\end{align}
Using the Markov property at time $k-i$, the above probability equals to
\begin{align}
\mathbf{P}\Big( f(-\hat{S}_{k-i})\,;\,\bar{\xi}(\hat{S}_{k-i}-\hat{S}_{k-i-1})>c'e^{-\hat{S}_{k-i}/3},{\underline{-\hat{S}}}_{k-i}\geq -c_{18}\Big),\nonumber
\end{align}
where for $-r\geq -c_{18}$,
\begin{align}
f(-r)=&\mathbf{P}_{-r}(\frac{1}{\alpha}\log n\leq \hat{S}_i\leq \frac{1}{\alpha}\log n+c_{18}, {\underline{-\hat{S}}}_i\geq-\frac{1}{\alpha}\log n -c_{18},\min_{0\leq j<i-\frac{\alpha n}{4}}(-\hat{S}_j)\geq -c_{18})\nonumber\\ \leq& c_{55}\frac{(1+c_{18}-r)^{\alpha-1}}{n^{1+\frac{1}{\alpha}}}.\nonumber
\end{align}
The last line is from Lemma \ref{L:2.6b} in which we treat $\{-\hat{S_n}\}$ as a new random walk with the step  distributed as $-S_1$. After a time reversal, we obtain
\begin{align}
\mathbf{P}\eqref{E:7.07}&\leq \frac{c_{55}}{n^{1+\frac{1}{\alpha}}}\mathbf{E}\bigg((1-\hat{S}_{k-i}+c_{18})^{\alpha-1}\mathbf{1}_{\{\bar{\xi}(\hat{S}_{k-i}-\hat{S}_{k-i-1})>c'e^{-\hat{S}_{k-i}/3},{\underline{-\hat{S}}}_{k-i}\geq -c_{18}\}}\bigg)\nonumber\\ &\leq \frac{c_{55}}{n^{1+\frac{1}{\alpha}}}\mathbf{E}_Q\bigg((1-V(\omega_{k-i})+c_{18})^{\alpha-1}\mathbf{1}_{\{{\xi}(\omega_{k-i})>c'e^{-V(\omega_{k-i})/3},-V(\omega_j)\geq -c_{18},\,j\leq k-i\}}\bigg)\label{E:7.04}.
\end{align}
Similarly to dealing with \eqref{E:7.03}, we obtain by the Markov property at time $k\!-\!i\!-\!1$ (the expectation above we denoted by $\mathbf{E}_Q\eqref{E:7.04})$,
\begin{align}\label{E:7.08}
\mathbf{E}_Q\eqref{E:7.04}\leq \mathbf{E}_Q(\mathbf{1}_{\{-V(\omega_j)\geq -c_{18},\,j\leq k-i-1\}}\lambda(V(\omega_{k-i-1}))),\nonumber
\end{align}
where for $x\geq-c_{18}$, $\lambda(x):=\mathbf{1}_{\{\xi>c'e^{-(x+\delta)/3}\}}(1+(c_{18}-(x+\delta))_+)^{\alpha-1}$. Hence,
\begin{align}
&\sum_{i=\frac{\alpha n}{4}}^{k-1}\mathbf{E}_Q\eqref{E:7.04}\\ \leq \sum_{i=\frac{\alpha n}{4}}^{k-1}&\mathbf{E}_Q\Big(\big(1+(c_{18}-(V(\omega_{k-i-1})+\delta))_+\big)^{\alpha-1}\mathbf{1}_{\{-V(\omega_j)\geq -c_{18},\,j\leq k-i-1,V(\omega_{k-i-1})>3(\ln c'-\ln \xi)-\delta\}}\Big).\nonumber
\end{align}
By Lemma \ref{L:2.10} $(\romannumeral2)$,
\begin{align}
&\sum_{i=\frac{\alpha n}{4}}^{k-1}\mathbf{E}_Q\eqref{E:7.04}\\ \nonumber&\leq c_{56}\mathbf{E}_Q\bigg(\big(1+\ln_+\xi\big)^{\alpha-1}\big(1+\ln_+\xi+\delta_+\big)\mathbf{1}_{\{\ln c'<\ln\xi+c_{18}+\frac{\delta}{3}\}}\bigg)\nonumber \\ &\leq c_{56}\mathbf{E}\bigg(\Big({X}\big(1+\ln_+ (X+\tilde{X})\big)^\alpha+\nonumber\\&{\tilde{X}}\big(1+\ln_+ (X+\tilde{X})\big)^{\alpha-1}\Big)\mathbf{1}_{\{\ln c<\ln(X+\tilde{X})+c_{18}+\frac{1}{3}\max_{|x|=1}V(x)\}}\bigg)\nonumber\\&\leq
\epsilon\nonumber
\end{align}
for $c'$ large enough since $\mathbf{E}(X(1+\ln_+(X+\tilde{X}))^\alpha)+\mathbf{E}(\tilde{X}(1+\ln_+(X+\tilde{X}))^{\alpha-1})
<\infty$.

\end{proof}

We now can use Lemma \ref{L:7.1} to prove the following theorem which is much closer to Theorem~\ref{T:1.4}.
\begin{theorem}\label{T:7.2}
\begin{align}
{\underline{\lim}}_{n\rightarrow\infty}\Big(\min_{|x|=n}V(x)-\frac{1}{\alpha}\log n\Big)=-\infty, \;\;\mathbf{P}^*\!\!-\!\!a.s.\nonumber
\end{align}
\end{theorem}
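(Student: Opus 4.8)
The plan is to reduce the statement to a purely probabilistic assertion about
\[
L:=\liminf_{n\to\infty}\Big(\min_{|x|=n}V(x)-\tfrac1\alpha\log n\Big)\in[-\infty,+\infty),
\]
and then to rule out all finite values of $L$ on the set of non-extinction. Decomposing at the first generation and using that the $\liminf$ of a minimum of finitely many real sequences equals the minimum of the $\liminf$'s (together with $\log(j+1)-\log j\to0$), one obtains the recursive identity
\[
L=\min_{|y|=1}\big(V(y)+L_y\big),
\]
where the $L_y$ are i.i.d.\ copies of $L$ attached to the subtrees rooted at the children, independent of $(V(y))_{|y|=1}$ and of the offspring number (with $L=+\infty$ on extinction, so empty minima are $+\infty$). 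From Lemma~\ref{L:7.1} and the reverse Fatou lemma, $\mathbf{P}(L\le c_{18})>0$; since $\{L<\infty\}=\{\exists\,|y|=1:L_y<\infty\}$ is an inherited event, $\mathbf{P}(L<\infty)=\mathbf{P}(\text{non-extinction})$. Likewise $\{L=-\infty\}=\{\exists\,|y|=1:L_y=-\infty\}$ is inherited, so with $\ell:=\mathbf{P}(L=-\infty)$ and $f$ the offspring generating function one gets $\ell=1-f(1-\ell)$, hence $1-\ell$ is a fixed point of $f$ and $\ell\in\{0,1-q\}$, $q$ being the extinction probability. As $\{L=-\infty\}\subseteq\{\text{non-extinction}\}$, the theorem is equivalent to $\ell=1-q$, so it suffices to prove $\mathbf{P}(L=-\infty)>0$.

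To create a deficit $\le-M$ with positive probability I would combine Lemma~\ref{L:7.1} with the fact that the associated walk $(S_n)$ is centred and oscillates, so that $\liminf_nS_n=-\infty$: via the many-to-one formula (or a greedy line of descent through children of step $\le-\delta$, with $\mathbf{P}(S_1\le-\delta)>0$) one has $\mathbf{P}(\exists x:V(x)\le-K)>0$ for every $K$, in fact with a lower bound of the form $e^{-cK}$ up to a polynomial factor. Composing: from a vertex $w$ one applies Lemma~\ref{L:7.1} inside the subtree of $w$ at a large scale $n$ to obtain, with probability $\ge c_{34}$, a descendant $y$ with $|y|-|w|\in[n,\alpha n]$ and $V(y)\le V(w)+\tfrac1\alpha\log n+c_{18}$; then, conditionally with positive probability, the subtree of $y$ contains a vertex $z$ with $V(z)\le V(y)-K$, and since $|z|\ge n$ its deficit $V(z)-\tfrac1\alpha\log|z|$ is at most $V(w)+c_{18}-K$. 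Choosing $K=V(w)+c_{18}+M$ therefore produces, from $w$, a generation with deficit $\le-M$ with conditional probability at least $c_{34}\,\mathbf{P}\big(\exists x:V(x)\le-(V(w)+c_{18}+M)\big)$.

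The remaining and substantially hardest step is to promote this conditional positivity to an almost sure, infinitely-often statement. The idea is a multiscale construction: fix $M$, pick scales growing fast, and at the $l$-th scale launch the two-step construction above from every vertex alive at a suitable generation $\sigma_l$, with $K$ adapted to the height of the launching vertex. Summing the conditional success probabilities over the vertices at generation $\sigma_l$ gives, up to constants depending only on $M$, a quantity of the order of $\sum_{|v|=\sigma_l}e^{-V(v)}(1+V(v))^{-C}$; since $\sum_n\sum_{|x|=n}e^{-V(x)}(1+V(x))^{-C}=\infty$ $\mathbf{P}^*$-a.s.\ — which follows from $\sum_nW_n=\infty$ $\mathbf{P}^*$-a.s.\ (a consequence of Theorem~\ref{T:1.3} and $D_\infty>0$ from Theorem~\ref{T:1.1}) together with the a.s.\ bound $\min_{|x|=n}V(x)=O(\log n)$ — a conditional Borel--Cantelli argument yields infinitely many successful attempts, each giving a generation $n$ with $\min_{|x|=n}V(x)-\tfrac1\alpha\log n\le-M$. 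Intersecting over $M\in\mathbb N$ gives $L=-\infty$ $\mathbf{P}^*$-a.s., hence $\mathbf{P}(L=-\infty)>0$, which with the first paragraph completes the proof.

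The main obstacle is exactly this last step. Besides assembling the quantitative ingredients — the left-tail bound $\mathbf{P}(\exists x:V(x)\le-K)\ge e^{-cK}$ (up to a polynomial factor), an a.s.\ upper bound on the minimal position, and the divergence of $\sum_nW_n$ — one must organise the attempts at the different scales so that a (conditional) Borel--Cantelli lemma genuinely applies despite the nesting of the subtrees used; the delicate point is that the success probability of a single attempt decays exponentially in the height of the vertex from which it is launched, and this decay must be compensated by the many vertices available at each scale.
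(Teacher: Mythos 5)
Your first paragraph is a sound reduction, and your identification of $\mathbf{P}(L\le c_{18})>0$ from Lemma~\ref{L:7.1} and reverse Fatou is correct (though the recursive identity $L=\min_{|y|=1}(V(y)+L_y)$ requires some care when the offspring number may be infinite, since the $\liminf$ of an infinite infimum need not equal the infimum of the $\liminf$'s; the forward inclusion $\{L_y=-\infty\text{ for some }y\}\subseteq\{L=-\infty\}$ is the one you actually use for inheritance and that direction is fine). The real problem is the step you yourself flag: you never actually prove $\mathbf{P}(L=-\infty)>0$, and the multiscale Borel--Cantelli sketch you give does not close because it fights against an exponentially small dive probability $e^{-cK}$ with $K$ growing like $M$, compensated only heuristically by the divergence of $\sum_n W_n$. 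This is a genuine gap, not merely a technicality.

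The paper takes a different and cleaner route that sidesteps this entirely. Fix $J>0$ and choose $L=L(J)$ so that $c_{57}:=\mathbf{P}(\min_{|x|=L}V(x)\le -J)>0$; this depth $L$ and success probability $c_{57}$ do not depend on the scale. Then run Lemma~\ref{L:7.1} from the root at the scales $n_k=(L+\alpha)^k$, set $T_k:=\inf\{i\ge n_k:\min_{|x|=i}V(x)\le\tfrac1\alpha\log n_k+c_{18}\}$, and let $G_k$ be the event that $T_k\le\alpha n_k$ and the found vertex $x_{(k)}$ has a depth-$L$ descendant dipping a further $J$ below it. Because $T_k$ is a stopping time, $\mathbf{P}(G_l\mid\mathcal{F}_{T_l})=c_{57}\mathbf{1}_{\{T_l\le\alpha n_l\}}$, and since $\{G_j^c,\dots,G_{l-1}^c\}$ is $\mathcal{F}_{T_l}$-measurable this yields the iteration
\[
\mathbf{P}\Big(\bigcup_{k=j}^l G_k\Big)\ge(1-c_{57})\mathbf{P}\Big(\bigcup_{k=j}^{l-1}G_k\Big)+c_{34}c_{57},
\]
which after letting $l\to\infty$ and then $j\to\infty$ gives $\mathbf{P}(\overline{\lim}_k G_k)\ge c_{34}$. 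On $\overline{\lim}_k G_k$ one has $\liminf_n(\min_{|x|=n}V(x)-\tfrac1\alpha\log n)\le c_{18}-J$, so $\mathbf{P}(\liminf\le c_{18}-J)\ge c_{34}$ uniformly in $J$; letting $J\to\infty$ and finally invoking the branching property and almost sure survival under $\mathbf{P}^*$ gives the result. The crucial point you missed is that the dive is by a fixed amount $J$ each time, launched from a vertex already at height about $\tfrac1\alpha\log n_k$ found from the root, so each attempt succeeds with a fixed probability $c_{57}$ and the nesting is handled automatically by the stopping-time Markov property. This eliminates the exponential-decay-versus-population-growth balancing act that your sketch correctly identifies as delicate but does not resolve.
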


\begin{proof}
This proof is similar to the case $\alpha=2$ in Aidekon and Shi \cite{AS1}. For completeness, we still present it here. By our assumption, we have $\mathbf{P}(\min_{|x|=1}V(x)<0)>0$. Thus for any $J>0$, there exists an integer $L=L(J)\geq1$ such that
\begin{align}
c_{57}:=\mathbf{P}(\min_{|x|=L}V(x)\leq -J)>0.\nonumber
\end{align}
Let $n_k:=(L+\alpha)^k,k\geq1$, which means $n_{k+1}\geq\alpha n_k+L$. And let
\begin{align}
&T_k:=\inf\{i\geq n_k:\min_{|x|=i}V(x)\leq \frac{1}{\alpha}\log n_k\!+\!c_{18}\},\nonumber\\
&G_k:=\{T_k\leq \alpha n_k\}\cap\{\min_{|y|=L}\big[V(x_{(k)}y)-V(x_{(k)})\big]\leq-J\},\nonumber
\end{align}
where $x_{(k)}$ is the individual satisfying $|x_{(k)}|=T_k, V(x_{(k)})\leq \frac{1}{\alpha}\log n_k\!+\!c_{18}$ (when $T_k\leq \alpha n_k$) and $x_{(k)}y$ denotes the connection of $x_{(k)}$ with $y$. More precisely, $x_{(k)}y$ denotes a vertex $z$ on the tree such that $x_{(k)}$ is the ancestor of it and $|z|=|x_{(k)}|\!+\!|y|$.
For any pair of positive integers $j\leq l$,
\begin{align}\label{E:7.6}
\mathbf{P}\big(\bigcup_{k=j}^lG_k\big)=\mathbf{P}\big(\bigcup_{k=j}^{l-1}G_k\big)+
\mathbf{P}\big(\bigcap_{k=j}^{l-1}G^c_k\cap G_l\big).
\end{align}
By the Markov property at time $T_l$,
\begin{align}
\mathbf{P}(G_l|\mathcal{F}_{T_l})=\mathbf{1}_{\{T_l\leq\;\alpha n_l\}}\;\mathbf{P}(\min_{|y|=L}V(y)\leq-J)=c_{57}\mathbf{1}_{\{T_l\leq\;\alpha n_l\}}.\nonumber
\end{align}
Since $\bigcap_{k=j}^{l-1}G^c_k$ is $\mathcal{F}_{T_l}-$measurable,
\begin{align}
\mathbf{P}\big(\bigcap_{k=j}^{l-1}G^c_k\cap G_l\big)&=c_{57}\mathbf{E}\big(\mathbf{1}_{\{\bigcap_{k=j}^{l-1}G^c_k\}}\cdot\mathbf{1}_{\{T_l\leq\;\alpha n_l\}}\big)\nonumber\\
&\geq c_{57}\mathbf{P}(T_l\leq\alpha n_l)-c_{57}\mathbf{P}(\bigcup_{k=j}^{l-1}G_k).\nonumber
\end{align}
Lemma \ref{L:7.1} tells us for all large $l$ (say $l\geq j_0$), $\mathbf{P}\{T_l\leq\alpha n_l\}\geq c_{34}$. Combining this with \eqref{E:7.6}, we obtain
\begin{align}
\mathbf{P}\big(\bigcup_{k=j}^lG_k\big)\geq (1-c_{57})\mathbf{P}\big(\bigcup_{k=j}^{l-1}G_k\big)+c_{34}c_{57},\;\; \;j_0\leq j.\nonumber
\end{align}
Iterating the inequality above yields that
\begin{align}
\mathbf{P}\big(\bigcup_{k=j}^lG_k\big)\geq c_{34}c_{57}\sum_{i=0}^{l-j-1}(1-c_{57})^i.\nonumber
\end{align}
Letting $l\to \infty$, we get $\mathbf{P}\big(\bigcup_{k=j}^\infty G_k\big)\geq c_{34}$ ($j\geq j_0$). As a consequence,
\begin{align}
\mathbf{P}({\overline{\lim}}_{k\rightarrow\infty}G_k)\geq c_{34}.\nonumber
\end{align}
On the event ${\overline{\lim}} \;G_k$, there are infinitely many vertices $x$ such that $V(x)\leq \frac{1}{\alpha}\log|x|+c_{18}-J$. Hence,
\begin{align}
\mathbf{P}\Big({\underline{\lim}}_{n\rightarrow\infty}\big(\min_{|x|=n}V(x)-\frac{1}{\alpha}\log n\big)\leq c_{18}-J\Big)\geq c_{34}.\nonumber
\end{align}
By the arbitrary of  $J>0$, we obtain
\begin{align}
\mathbf{P}\Big({\underline{\lim}}_{n\rightarrow\infty}\big(\min_{|x|=n}V(x)-\frac{1}{\alpha}\log n\big)\leq -\infty\Big)\geq c_{34}.\nonumber
\end{align}
Recalling that the system survives almost surely under $\mathbf{P}^*$, and by the branching property, it follows that
\begin{align}
\mathbf{P}^*\Big({\underline{\lim}}_{n\rightarrow\infty}\big(\min_{|x|=n}V(x)-\frac{1}{\alpha}\log n\big)\leq -\infty\Big)=1,\nonumber
\end{align}
which completes the proof. \qed
\end{proof}
\qed\\
Now we are ready to prove Theorem \ref{T:1.4}.\\
\\
\emph{Proof of Theorem \ref{T:1.4}.} Observing that $W_n=\sum_{|x|=n}e^{-V(x)}\geq \exp\{-\min_{|x|=n}V(x)\}$, then
\begin{align}
n^{\frac{1}{\alpha}}W_n\geq \exp\,\big\{\frac{1}{\alpha}\log n-\min_{|x|=n}V(x)\big\}.\nonumber
\end{align}
It follows from Theorem \ref{T:7.2} that
\begin{align}
\limsup_{n\rightarrow\infty} \,n^{\frac{1}{\alpha}}W_n=\infty\;\;\;\mathbf{P}^*\!\!-\!\!a.s. \nonumber
\end{align}
\qed

\noindent{\bf Acknowledgement.} We are grateful to Dr. Xinxin Chen for enlightenment on Lemma 2.11.

\bigskip\bigskip

\bigskip\bigskip

\noindent{\small Laboratory of Mathematics
and Complex Systems, School of Mathematical Sciences, Beijing Normal
University, Beijing 100875, P.R. China}

\noindent{\small Emails: hehui@bnu.edu.cn

\quad\quad\,  liujingning14@163.com

\quad\quad\,  meizhang@bnu.edu.cn}

\end{document}